\documentclass{article}
\usepackage[utf8]{inputenc}
\usepackage[T1]{fontenc}
\usepackage[english]{babel}
\usepackage{amsfonts,amsmath,amssymb}
\usepackage{mathrsfs}
\usepackage{enumitem}
\usepackage{stmaryrd}
\usepackage{amsthm}
\usepackage{xspace}
\usepackage{color}
\usepackage{geometry}
\usepackage{bbold}  
\usepackage{subfiles}
\usepackage{csquotes} 
\usepackage{biblatex} 

\usepackage[todonotes={textsize=scriptsize}]{changes}
\usepackage[colorlinks=true,linkcolor=blue, citecolor=red]{hyperref}
\usepackage{color}

\usepackage{microtype} 

\AtEveryBibitem{
	\clearfield{urlyear}
	\clearfield{urlmonth}
}

\newtheorem{Def}{Definition}[section]
\newtheorem{Prop}{Proposition}[section]
\newtheorem{Lemma}[Prop]{Lemma}

\newtheorem*{Rmk}{Remark}
\newtheorem{Thm}{Theorem}
\newtheorem{Cor}[Prop]{Corollary}
\newtheorem*{Hyp}{Assumptions}

\newcommand{\R}{\ensuremath{\mathbb{R} \xspace}}

\newcommand{\T}{\ensuremath{\mathbb{T} \xspace}}
\newcommand{\norm}[1]{\left\lVert #1
	\right\rVert}
\newcommand{\QT}{\ensuremath{{Q_T} \xspace}}

\newcommand{\dd}{\mathrm{d}}

\makeatletter
\def\namedlabel#1#2{\begingroup
	#2%
	\def\@currentlabel{#2}%
	\phantomsection\label{#1}\endgroup
}
\makeatother

\newcommand{\dt}{ \, \dd t  }
\newcommand{\mean}[1]{\left[ #1
	\right]_{\T^d}}
\newcommand{\C}{\mathcal{C} }
\newcommand{\Z}{\ensuremath{\mathbb{Z} \xspace}}
\newcommand{\hs}{ {H^s} }
\newcommand{\hsp}{ {H^{s+1}} }

\title{Refined blow-up criteria and global solutions for triangular cross-diffusion systems}
\author{ Alexandre Bertolino \footnote{Sorbonne Université, CNRS, Université Paris Cité, Laboratoire Jacques-Louis Lions (LJLL), Département de Mathématiques et Applications (DMA), Ecole Normale Supérieure (ENS-PSL), F-75005 Paris, France --- \href{mailto:alexandre.bertolino@sorbonne-universite.fr}{alexandre.bertolino@sorbonne-universite.fr} }}
\date{\today}

\begin{document}
	\maketitle
	\begin{abstract}
		We study the Cauchy problem associated with a class of triangular cross-diffusion systems of Shigesada--Kawasaki--Teramoto type. We develop a self-contained well-posedness theory in $\mathcal C^0 ( [0,T]; H^s(\mathbb T^d))$ based on regularity estimates for scalar Kolmogorov equations.
		The diffusion coefficient of each species depends only on species of lower index, yielding a hierarchical structure that allows for refined blow-up criteria. Finite-time singularities can occur only through the divergence of the $L^\infty(\T^d)$ norm of the solution. Assuming polynomial growth of the nonlinearities, this criterion is refined to an $ L^p$-based blow-up condition for some finite exponent $p$, yielding a substantially weaker obstruction to global existence than classical Sobolev blow-up criteria. The proof is achieved through refined tame estimates for composition in Sobolev spaces. As an application, we prove global existence of non-negative strong solutions for two-species systems with logistic-type reaction terms in dimensions $d\le 2$.
		
	\end{abstract}
	
	\section{Introduction and main results}
	\subsection{The triangular SKT system}
	
	In this article, we are concerned with the following cross-diffusion system, set on the $d$-dimensional torus $\T^d$:
	\begin{equation} \label{eq:SKTtr}
		\left\{\begin{array}{ll}
			\partial_t u_i  - \Delta[\mu_i(u_1, \dots u_{i-1}) u_i]= r_i(u_1, \dots, u_n), \quad \forall i \in\{1 ,\dots, n \} \\
			u_i(t=0, \cdot) = u_i^0,
		\end{array} \right. 
	\end{equation}
	where $n\in \mathbb N^*$ and for all $1 \le i \le n $, $u_i^0 : \T^d \rightarrow \R$, $\mu_i: \R^{i-1} \rightarrow \R_+$ and $r_i: \R^n \rightarrow \R$ are given and $u_i : Q_T = [0,T) \times \T^d \rightarrow \R$ are the unknown. To ensure a proper parabolic setting, we work under the non-degeneracy condition $\inf \mu_i  = \alpha >0$. Equations \eqref{eq:SKTtr} stem from the SKT system, introduced by Shigesada, Kawasaki, and Teramoto in \cite{shigesada_spatial_1979}:
	\begin{equation} \label{eq:SKTog}
		\left\{\begin{array}{ll}
			\partial_t u_1  - \Delta[(d_1 + a_{11}u_1 + a_{12}u_2)u_1 ]= u_1(\rho_1 - s_{11} u_1 - s_{12} u_2), \\
			\partial_t u_2  - \Delta[(d_2 + a_{21}u_1 + a_{22}u_2)u_2 ]= u_2(\rho_2 - s_{21} u_1 - s_{22} u_2), \\
			(u_1,u_2)(t=0, \cdot) = (u_1^0, u_2^0).
		\end{array} \right. 
	\end{equation}
	
	The Cauchy problem \eqref{eq:SKTtr} is a generalization of the strictly triangular version (meaning $a_{11}= a_{21} = a_{22} = 0$) of this system. The latter describes a population model in which two species, whose densities are denoted by $u_1$ and $u_2$ interact through competition for resources and mutual agitation. Both these interactions induce non-linearities, with coefficients $a_{ij}$ quantifying the strength of interspecific ($i \neq j$) or intraspecific ($i=j$) agitation and $s_{ij}$ the competition. The coefficients $ \rho_i$ are reproduction rates, and $d_i$ are the base diffusion rate when no additional agitation is induced from other individuals. The interest in this system is fueled by the repulsive behavior induced by the cross-diffusion terms, which can lead to stable segregated equilibria \cite{breden_influence_2019,inoue_coexistence-segregation_2023}. However, the study of the associated Cauchy problem only started five years after the introduction of \eqref{eq:SKTog} with \cite{kim_smooth_1984}, which began a long hike for the existence of solutions. One big step was the development in the nineties by Amann in \cite{amann_nonhomogeneous_1993} of a general theory for quasi-linear parabolic problems that contains local well-posedness theorems in Sobolev spaces $W^{1,p}, p>d $ with explosion criterion in the associated norm. Actually, Amann's theory encompasses a broader field of application, allowing for more general domains and boundary conditions. Nowadays, most proofs of the global existence of solutions to the SKT model rely on Amann's theorem and focus on proving non-explosion in $W^{1,p}$ norm. This is for example the case in \cite{lou_global_1998,choi_existence_2003,hoang_gradient_2015} or more recently \cite{guerand_global_2023} in which bounds in strong norms are proved independently on Amann theory. The application of this theory generally comes with the cost of restrictions on the system, be it its dimension \cite{lou_global_1998} or smallness of cross-diffusion coefficients \cite{choi_existence_2003}. While other methods based on entropy estimates allowed for the construction of global weak solutions \cite{chen_analysis_2006, desvillettes_entropic_2015, dietert_persisting_2024, lepoutre_entropic_2017}, the question of their uniqueness is still not well understood \cite{chen_note_2018}. In particular, the strong setting of existing weak-strong uniqueness results is too strong to be applied to Amann's solutions \cite{chen_weakstrong_2019}. Recently, efforts have been made by Gallagher and Moussa \cite{gallagher_cauchy_2025} to develop an alternative to Amann's theory on the torus. Their approach is based on Fourier analysis and use a coefficients freezing procedure to match Amann's assumption of only a mild ellipticity condition on the system, called \emph{Petrovskii’s condition}.

	In the triangular case ($a_{21}=0$), questions such as regularity and global existence are understood under various types of restrictions on the system. 
	In \cite{guerand_global_2023}, Guerand, Menegaki and Trescases consider generalizations of the triangular SKT systems and briefly present some bibliographical material referencing the different constraints on the system that can lead to global existence of regular solutions. They highlight the importance of self-diffusion in obtaining regularity and quantify the strength of self-diffusion needed to obtain global smooth solutions. In the case without any self-diffusion at all ($a_{11}=a_{22}=0$), the theory is more sparse. Pozio and Tesei \cite{pozio_global_1990} proved global smooth existence under a smallness condition on $a_{12}$ and decay assumptions on the reaction terms. The triangular structure was also exploited by Moussa in \cite{moussa_non-local_2018} to recover \eqref{eq:SKTtr} from a non-local version, as part of a derivation program from finite population models, and as a consequence obtained global existence in a weaker $L^2(\R_+ \times \T^d)$ setting, but the uniqueness of these solutions is not guaranteed. Recently, Bouton, Desvillettes and Dietert proved in \cite{bouton_global_2025} a strong global existence result in dimension $d \le 4$.

	The present work follows the same motivations as \cite{gallagher_cauchy_2025} and builds a self-contained local existence theory in the triangular case with no self-diffusion. Due to the hierarchical structure of the system \eqref{eq:SKTtr}, the proof of local existence is simpler than in \cite{gallagher_cauchy_2025} and will not require the coefficients freezing procedure. In our simpler case, refined blow-up criteria will be obtained. Our functional setting will be the same as in \cite{gallagher_cauchy_2025}. Before presenting it and stating our main results, let us introduce some notation.

	\subsection{Notations}
	
	We fix $d \in \mathbb N$ an integer. Throughout this article, we denote $\T^d := \R^d / \mathbb Z ^d$ the $d$-dimensional torus, on which for any function $u: \T^d \rightarrow \R$ the usual $L^p$ norm, mean and $L^2$ scalar product are respectively denoted $\norm{u}_{L^p} = \left(\int_{\T^d} |u|^p\right)^{1/p} $, $\mean u = \int_{\T^d} u $ and $ \langle u, v \rangle_{L^2}$. For $s \in \R$, we denote $(H^s(\T^d), \norm{\cdot}_{\hs})$ the Sobolev spaces on $\T^d$ modeled on $L^2$ and $J^s = (I- \Delta)^{\frac s 2}$ the Fourier multiplier with symbol $(1 + 4 \pi^2 |k|^2)^{\frac s 2}$ . The inhomogeneous Sobolev norms can be expressed using the operator $J^s$ and for all $u \in H^s(\T^d)$,
	$$ \norm u _{\hs} = \norm{J^s u}_{L^2}.$$
	For all $T>0$, we also define the space-time domain $\QT=[0,T) \times \T^d$ and the functional spaces $X_T^s = \mathcal C([0,T] ; H^s(\T^d))$, $Y_T^s = L^2([0,T] ; H^s(\T^d))$ and $E_T^s = X_T^s \cap Y_T^{s+1}$ and set for $u \in E_T^s$
	$$ \norm{u}_{E_T^s}^2 = \norm{u}_{X_T^s}^2+ \norm{u}_{Y_T^{s+1}}^2 = \sup_{0 \le t \le T} \norm{u(t)}_{\hs}^2 + \int_0^T \norm{u(t)}_{\hsp}^2 \,  \dd t.$$
	
	Given two linear operators $\mathcal A$ and $\mathcal B$, their commutator is denoted $[\mathcal A, \mathcal B] = \mathcal A \mathcal B - \mathcal B \mathcal A$. If $u$ is a function, the operator of multiplication by $u$ is simply denoted $u$.

	We will write $a \lesssim b $ whenever $a \le Cb$ with a constant $C>0$ depending only on the parameters of the problem. In Section \ref{sec:kolmogorov} these parameters are $s, d $ and $\inf_\QT \mu$ and in all the following sections $C$ can also depend on $\mu_i$ and $r_i$. For any $\alpha, \beta \ge 0$, we will write $a \lesssim_{\alpha, \beta} b$ if $C$ also depends increasingly on $\alpha$ and $\beta$. Sometimes, when we want to compare certain quantities with $C$, we will not use this $\lesssim$ notation and will just write $a \le C(\alpha, \beta) b $. 
	
	Given a real number $x$, his positive part and upper integer part will be denoted respectively $x^+= \max(0,x)$ and $\lceil x \rceil = \min \{n \in \mathbb Z \mid n \ge x\}$.
	
	Distinction between vectorial and scalar quantities will be made by using uppercase for vectors and 
	lowercase for scalars. For example, the solution of \eqref{eq:SKTtr} is denoted $U=(u_1, \dots, u_n)$.  Moreover if all coordinates of a vector $U\in \R^n$ are non-negative, we write $U \ge 0$ and say that $U$ is non-negative.
	The functional spaces of vector-valued functions will be written the same way as the spaces of scalar functions. For example, we write $H^s(\T^d)$ instead of $H^s(\T^d)^n$. Lastly, if $f: \R^k \rightarrow \R$ with $k<n$, we still denote $f(U)$ the quantity $f(u_1, \dots, u_k)$. 
	
	\subsection{Outline and main results}	
	
	Given the structure of \eqref{eq:SKTtr}, it will be useful to study the following problem:
	\begin{Def}
		For any bounded function $\mu\in L^\infty(Q_T)$, $f\in L^1(0,T;\mathscr{D}'(\T^d))$ and any $z_0\in L^1(\T^d)$, we say that $z\in L^1(Q_T)$ is a solution of the Cauchy problem
		\begin{equation} \label{eq:Kolmogorov}
			\left\{\begin{array}{ll}
				\partial_t z  - \Delta[\mu z]= f \\
				z(0, \cdot) = z^0 
			\end{array} \right. 
		\end{equation}
		if, for all $\varphi\in\mathscr{C}_c^\infty([0,T)\times\T^d)$  there holds
		\begin{align*}
			-\int_{Q_T} z (\partial_t \varphi + \mu \Delta\varphi) = \int_{\T^d}z^0 \varphi(0) + \int_0^T \langle f(t),\varphi(t)\rangle\,\dd t.
		\end{align*}
		
	\end{Def}
	This frame produces a notion of solution for \eqref{eq:SKTtr} which we see from now on as a system of $n$ Cauchy problems of type \eqref{eq:Kolmogorov}. Equation \eqref{eq:Kolmogorov} is called the Kolmogorov equation and its study in Section \ref{sec:kolmogorov} will allow to develop estimates on solutions of systems of type \eqref{eq:SKTtr}. To do so, we will rely on higher regularity variants of the duality lemma used in \cite{bansaye_stability_2025}. Before stating our main result, let us introduce our assumptions. 
	\begin{Hyp} 
		We consider the following set of assumptions: \begin{enumerate}[label=(\roman*)]
			\item[\namedlabel{ass:1}{(A1)}] $d \ge 1$ and $s > d /2  $.
			\item[\namedlabel{ass:2}{(A2)}] For all $k \in \{1, \dots, n\}$, the functions $r_k: \R^n \rightarrow \R$ and $\mu_k : \R^{k-1} \rightarrow \R $ are smooth.
			\item[\namedlabel{ass:3}{(A3)}] For all $k \in \{1, \dots, n\}$, $\inf_{\R ^{k-1}} \mu_k > 0$.
			\item[\namedlabel{ass:4}{(A4)}] For all $k \in \{1, \dots, n\}$, $r_k$ has the form $r_k(X) = x_k \bar r_k (X)$ with $\bar r_k \in \mathcal C^\infty(\R^n;\R)$
			\item[\namedlabel{ass:5}{(A5)}] For all $k \in \{1, \dots, n\}$,	there exists constants $C>0$ and $\alpha\ge 1$ such that $|\nabla^{\lceil \frac d 2 \rceil +2} \mu_k(X)|$, $|\nabla^{\lceil \frac d 2 \rceil} r_k(X)| \le C(1 + |X|^\alpha)$ for all $X \in R^n$. 
			\item[\namedlabel{ass:6}{(A6)}] Under \ref{ass:4}, there exists constants $C>0$ and $\alpha \ge 1$ such that for all $k \in \{1, \dots, n\}$, $ -C(1 + |X|) \lesssim \bar r_k(X)\lesssim C $ and $|\nabla \bar r_k (X)| \le C(1 + |X|^\alpha)$ for all $X \in R^n$.
		\end{enumerate}
		
	\end{Hyp}
	Conditions \ref{ass:1} and \ref{ass:2} are here to ensure that composition by the $\mu_k$ and $r_k$ behave nicely in $H^s(\T^d)$. \ref{ass:3} is similar to a parabolicity condition and ensures that energy-type estimates are possible. Assumption \ref{ass:4} is typical of poppulation dynamics systems and means that there is a notion of birth and death rate by individual. Assumption \ref{ass:5} is satisfied whenever the $ \mu_k$ and $r_k$ are polynomials. Assumption \ref{ass:6} contains a weaker version of \ref{ass:5} and a bound that ensures that the total population cannot blow up in finite time.

	The main result is the following:
	\begin{Thm} \label{th:localExist} Consider assumptions \ref{ass:1}–\ref{ass:3} and let $U^0 \in H^s(\T^d)$. There exists $T>0$ and a unique solution $U \in E_T^s$ of \eqref{eq:SKTtr}. Moreover, if $T^*$ is the maximal lifetime of $U$, then $U \in \mathcal C^\infty( (0, T^*) \times \T^d)$ and if $T^* < \infty$,
		$$  \norm{U(t)}_{L^\infty(\T^d)} \underset{t \to T^*}{\longrightarrow} \infty .$$  
		Additionally, if $V^0 \in H^s(\T^d)$ and the Cauchy problem \eqref{eq:SKTtr} admits a solution $V \in E_T^s$, the following estimate holds:
		$$ \norm{U- V}_{E_T^s} \lesssim_{\norm{U}_{E_T^s} ,\norm{V}_{E_T^s} } \norm{U^0- V^0}_{\hs} .  $$
		Lastly, considering the additional assumption \ref{ass:4}, if all components of $U_0$ are non-negative on $\T^d$ then so are the components of $U$ on $Q_{T^*}$.
	\end{Thm}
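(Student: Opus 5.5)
The plan is to exploit the triangular structure and argue by induction on the species index $i$, reducing everything to estimates for the scalar Kolmogorov equation \eqref{eq:Kolmogorov}. The first step is the linear theory. For $\mu \ge \alpha>0$ with $\mu$ sufficiently regular (say $\mu \in L^\infty \cap L^2(0,T;H^{s+1})$ plus time regularity, or $\mu\in E_T^s$), $z^0\in H^s$ and $f \in Y_T^{s-1}$, I will prove that there is a unique solution $z\in E_T^s$ satisfying
\[ \norm{z}_{E_T^s} \lesssim_{\norm{\mu}_{E_T^s}} \norm{z^0}_{\hs} + \norm{f}_{Y_T^{s-1}}. \]
I will obtain this by writing the equation in divergence form $\partial_t z - \nabla\cdot(\mu\nabla z + z\nabla\mu)=f$ and applying $J^s$. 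Testing against $J^s z$ yields the coercive term $\int \mu |\nabla J^s z|^2 \ge \alpha \norm{\nabla J^s z}_{L^2}^2$. The remaining terms are handled by Kato--Ponce commutator estimates for $[J^s,\mu]$ and by tame product estimates, valid since $s>d/2$. Existence then follows by Galerkin approximation or by duality, as in \cite{bansaye_stability_2025}, and uniqueness holds in the weak class by duality.

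For local existence, I set up a fixed point, or simply a Picard iteration, in a ball of $E_T^s$. Given $U$, define $\Phi(U)_i$ as the solution of the Kolmogorov equation with coefficient $\mu_i(u_1,\dots,u_{i-1})$ and source $r_i(U)$. Composition estimates in $H^s$, which use smoothness of $\mu_i,r_i$ under \ref{ass:2} and the algebra property of $H^s$, show that $\mu_i(U)\in E_T^s$ and $r_i(U)\in L^\infty H^s$. The factor $T^{1/2}$ coming from $\norm{r_i(U)}_{Y_T^{s-1}}\le T^{1/2}\norm{r_i(U)}_{X_T^s}$ makes $\Phi$ map a ball into itself for small $T$. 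Contraction is proved in a weaker norm, $E_T^{s-1}$ or $E_T^0$, by applying the linear estimate to differences, and the limit is recovered in $E_T^s$ by weak compactness. The same difference estimate, carried out at the $H^s$ level with constants depending on $\norm{U}_{E_T^s}$ and $\norm{V}_{E_T^s}$, gives the stability bound. Triangularity helps here: $\mu_i$ depends only on earlier components, so the difference $u_i-v_i$ is estimated in terms of $u_j-v_j$ for $j<i$, already controlled, plus a Gronwall argument for the reaction term.

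Smoothness on $(0,T^*)$ follows by bootstrapping. Since $U\in Y_T^{s+1}$, a.e. time is an $H^{s+1}$ initial datum, so uniqueness and the local theory at level $s+1$ give $U\in E^{s+1}$ on later intervals. Iterating gives $\mathcal C^\infty$ regularity in space, and the equation then gives it in time.

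The blow-up criterion is the main obstacle. I must show that a bound $\sup_{t<T^*}\norm{U(t)}_{L^\infty}<\infty$ forces $\norm{U}_{E^s}$ to stay bounded, so that the solution extends past $T^*$. I will argue by induction on $i$. Using the linear estimate at increasing regularity levels $\sigma = 0,1,\dots$ up to $s$, the coefficient $\mu_i(u_{<i})$ is controlled through Moser-type tame composition estimates of the form $\norm{F(u)}_{H^\sigma}\le C(\norm{u}_{L^\infty})\norm{u}_{H^\sigma}$. The key point is to redo the energy estimate keeping the dependence tame: linear in the top norm of $z$, with the coefficient's high norm multiplied only by low norms of $z$. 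This gives a Gronwall inequality rather than a superlinear one. The hardest term is $z\nabla\mu$ at top order, which requires $\norm{\mu}_{Y^{s+1}}\norm{z}_{L^\infty}$-type bounds. These are available because $z=u_i$ is bounded by assumption and $\mu$ depends only on the already controlled $u_{<i}$. Finally, non-negativity under \ref{ass:4} is obtained by writing $\partial_t u_i - \Delta(\mu_i u_i) = \bar r_i(U) u_i$, a linear equation in $u_i$ with bounded coefficients on compact time intervals. Testing with $u_i^-$, or using duality with a non-negative solution of the adjoint backward problem $\partial_t\varphi+\mu_i\Delta\varphi+\bar r_i\varphi=0$, gives $u_i\ge 0$.
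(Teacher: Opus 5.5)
Most of your plan is sound and close to the paper. This covers the linear $H^s$ theory (a direct $J^s$ energy estimate on the divergence form, where the paper instead applies its $H^{-1}$ duality lemma to $J^{s+1}z$; both lead to analogous commutator terms). It also covers the stability bound, the $L^\infty$ criterion via tame estimates in which $\norm{\mu_i(U)}_{H^{s+1}}$ enters only linearly against $L^\infty$ norms, and positivity by testing with $u_i^-$. The $L^\infty$ criterion must be closed by a single Gronwall over all components, since $r_i(U)$ involves every $u_j$.

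The gap is in the fixed point. You freeze the coefficient at the input, $\mu_i(u_1,\dots,u_{i-1})$, and claim that the factor $T^{1/2}$ in $\norm{r_i(U)}_{Y_T^{s-1}}\le T^{1/2}\norm{r_i(U)}_{X_T^s}$ makes a ball of $E_T^s$ invariant. It does not. The constant of your linear estimate depends on $\norm{\mu_i(U)}_{Y_T^{s+1}}$; Gronwall gives a factor like $\exp(c\norm{\mu_i(U)}_{Y_T^{s+1}}^2)$. On a ball of radius $R$ this is bounded only in terms of $R$, never small in $T$, because $\norm{U}_{Y_T^{s+1}}$ may be of size $R$ however small $T$ is. You therefore get $\norm{\Phi(U)}_{E_T^s}\le K(R)\norm{U^0}_{\hs}+O(T^{1/2})$ with $K(R)$ growing faster than $R$, so for large data no radius works.

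Contraction in $E_T^{s-1}$ or $E_T^0$ fails for the same reason. At any level $\sigma$, the term $\Delta\bigl((\mu_i(U)-\mu_i(V))\Phi(V)_i\bigr)$ in $Y_T^{\sigma-1}$ costs $\norm{U_{<i}-V_{<i}}_{Y_T^{\sigma+1}}$. That is the top-order part of the $E_T^\sigma$ norm of the difference, with an $O(1)$ constant and no power of $T$.

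The missing idea is to use triangularity here as well, not only in the stability estimate. The $O(1)$ coupling through the coefficients runs only from $j<i$ to $i$, so it is nilpotent. The coupling to all components through $r_i$ carries $T^{1/2}$. You can exploit this in either of two ways:
\begin{itemize}
\item Choose componentwise radii $R_1\ll R_2\ll\dots\ll R_n$, and weights $\epsilon^i$ in the contraction norm, before taking $T$ small.
\item Do as the paper does and put the new iterate in the coefficients, $\partial_t u_k^*-\Delta[\mu_k(u_1^*,\dots,u_{k-1}^*)u_k^*]=r_k(U)$. That triangular linear system is solved exactly by induction (Proposition~\ref{prop:triBound}, Corollary~\ref{cor:triBound}), with bounds depending only on $U^0$, $T$ and the source. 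Only the lower-order reaction is then frozen, and Proposition~\ref{prop:triangStab} yields a contraction directly on a ball of $X_T^s$ with factor $\sqrt T$.
\end{itemize}

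A smaller point concerns the smoothness bootstrap. Reaching all of $(0,T^*)$ at level $s+1$, rather than just short later intervals, requires the $L^\infty$ blow-up criterion applied at level $s+1$. So that step must come after the criterion, as in the paper.
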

	
	This theorem is proved by a fixed point procedure which will take roots in estimates for a version of \eqref{eq:SKTtr} where the $r_k(U)$ are replaced by generic source terms. It is to be compared with Theorem 1 of \cite{gallagher_cauchy_2025}, which also states local well-posedness. Theorem 2 of \cite{gallagher_cauchy_2025} states a blow-up criterion in $H^s(\T^d)$ and that under a smallness condition on the initial data, the solutions are global. Due to the triangular structure of our system, we can prove a finer blow-up criterion in $L^\infty(\T^d)$. Under an additional assumption of polynomial growth of the parameters $\mu_k, r_k$, this criterion may be refined in the following way.
	
	\begin{Prop} \label{prop:exploLp}
		Consider \ref{ass:1}–\ref{ass:3} and \ref{ass:5}. There exists $p < \infty$ such that for all $ U^0 \in H^s (\T^d)$, the maximal lifetime $T^*$ of the solution $U \in E_T^s $ of \eqref{eq:SKTtr} satisfies either $T^* = + \infty $ or
		$$ \int_0^T \norm{U(t)}_{L^p(\T^d)}^p  \dt \underset{T \to T^*}{\longrightarrow} \infty .$$
	\end{Prop}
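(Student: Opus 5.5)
We argue by contraposition. Suppose $T^*<\infty$ and $\int_0^{T^*}\norm{U(t)}_{L^p}^p\dt<\infty$ for a large exponent $p$, to be fixed at the end in terms of $d$, $s$ and the growth exponent $\alpha$ of \ref{ass:5}. We show that $\norm{U}_{E_T^s}$ then stays bounded as $T\to T^*$, hence $\norm{U(t)}_{L^\infty}$ stays bounded (since $s>d/2$). This contradicts the $L^\infty$ blow-up criterion of Theorem \ref{th:localExist}.

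The argument is an induction along the triangular hierarchy. At step $k$, $\mu_k(U)$ depends only on $u_1,\dots,u_{k-1}$, which are already controlled in $E_T^s$. We view $u_k$ as the solution of the Kolmogorov equation \eqref{eq:Kolmogorov} with $\mu=\mu_k(U)$ and $f=r_k(U)$, and apply the $H^s$ energy estimates of Section \ref{sec:kolmogorov}. These bound $\norm{u_k}_{E_T^s}$ by $\norm{u_k^0}_{\hs}$ plus the source $r_k(U)$ in $L^2H^{s-1}$. The coefficient enters only through $\nabla\mu_k(U)$ in suitable norms, and possibly a lower norm of $u_k$ times a high norm of $\mu_k(U)$, which is harmless since $u_1,\dots,u_{k-1}$ are already controlled. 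The estimate is linear in $u_k$.

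The main obstacle is the source term, because $r_k(U)$ involves all components, including $u_k$ itself and higher-index ones. We use refined tame composition estimates of Moser type with polynomial weights. For $F$ smooth with $|\nabla^m F(X)|\lesssim 1+|X|^\alpha$ for $m=\lceil d/2\rceil$, we aim at
$$\norm{F(U)}_{H^{\sigma}}\lesssim (1+\norm{U}_{L^{q}}^{\beta})\,\norm{U}_{H^{\sigma}}+\dots,$$
obtained by Gagliardo--Nirenberg interpolation between $L^q$ and $H^{s+1}$ for each term of the Faà di Bruno expansion. The growth hypothesis at order $\lceil d/2\rceil$ (resp. $\lceil d/2\rceil+2$ for $\mu_k$) is exactly what makes all lower derivatives polynomially bounded. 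Interpolating, each nonlinear term in the $H^{s-1}$ norm of $r_k(U)$ is bounded by $\norm{U}_{L^p}^{a}\norm{U}_{H^{s+1}}^{\theta}\norm{U}_{H^s}^{1-\theta}$ with $\theta<1$. Young's inequality then absorbs the $H^{s+1}$ part into the dissipation, leaving a term of the form $\norm{U}_{L^p}^{p}\norm{U}_{H^s}^2$ once $p$ is large enough. To avoid circularity with the higher-index components in $r_k$, we do not run the induction separately but sum the energy estimates over $k$ with weights $\epsilon^k$, $\epsilon$ small. The cross terms produced by $\mu_k(U)$ then get absorbed by the dissipation of lower-index components, and the $r_k$ terms are handled by the Gagliardo--Nirenberg bound above for all components simultaneously.

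The result is a differential inequality $\frac{\dd}{\dd t}\mathcal E+c\mathcal D\lesssim(1+\norm{U}_{L^p}^p)\,\mathcal E$, where $\mathcal E$ is the weighted sum of $\norm{u_k}_{\hs}^2$ and $\mathcal D$ the corresponding weighted sum of $\norm{u_k}_{\hsp}^2$. Grönwall's lemma with the integrable weight $\norm{U}_{L^p}^p$ bounds $\mathcal E$ up to $T^*$, which gives the contradiction. The choice of $p$ comes from requiring every interpolation exponent in the previous step to satisfy $\theta<1$ and the resulting time power to be at most $p$. Since $\alpha$, $d$, $s$ are fixed, such a finite $p$ exists. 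Checking this exponent bookkeeping uniformly over the Faà di Bruno terms is the delicate part. If the direct $H^s$ estimate proves too lossy, we first bootstrap $L^p$-in-time control to an intermediate $H^1$ or $L^\infty$ bound through the Kolmogorov duality estimates, and then invoke Theorem \ref{th:localExist} directly.
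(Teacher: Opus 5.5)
There is a genuine gap in how you handle the cross-diffusion terms once you drop the induction and sum over $k$. In the $H^s$ estimate for $u_k$, the top-order part of $[\mu_k(U),J^{s+1}]u_k$ is $\norm{\mu_k(U)}_{\hsp}\norm{u_k}_{L^\infty}$. If you want only $L^p$ norms of $U$ as prefactors, you must pay $P(\norm{U}_{L^p})\norm{\mu_k(U)}_{H^{s+1+\varepsilon}}$ instead (Lemma \ref{lem:comutLp}, Proposition \ref{prop:wekaerCompInst}). A loss-free tame bound with only $L^q$ weights, as in your first display, fails for concentrating profiles $U=A\phi(x/\rho)$ with $\rho\to0$.

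In either form, the cross term is a top-order norm (dissipative or beyond) of $u_1,\dots,u_{k-1}$, multiplied by a factor that is not bounded pointwise in time: $\norm{u_k(t)}_{L^\infty}^2$ or $P(\norm{U(t)}_{L^p})$. Constant weights $\epsilon^k$ cannot absorb this into $\epsilon^i\norm{u_i}_{\hsp}^2$, because that needs a uniform bound on the factor, which is essentially the $L^\infty$ bound you are trying to prove. Gr\"onwall does not help either. The would-be weight $\norm{\mu_k(U(t))}_{\hsp}^2$ is integrable only if $u_1,\dots,u_{k-1}$ are already bounded in $E_T^s$, and that is circular once the $r_i(U)$ couple all components. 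So the inequality $\frac{\dd}{\dd t}\mathcal E+c\mathcal D\lesssim(1+\norm{U}_{L^p}^p)\mathcal E$ is never reached.

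There is also a derivative-count problem. At the given $s$, which may be large, the Fa\`a di Bruno terms for $r_k(U)$ in $H^{s-1}$ and $\mu_k(U)$ in $H^{s+1}$ involve derivatives beyond orders $\lceil d/2\rceil$ and $\lceil d/2\rceil+2$. Assumption \ref{ass:5} does not control these.

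The paper resolves both issues by working at low regularity on a descending ladder $s_k=s_0-k\varepsilon$, with $s_0\in(\frac d2,\min(s,\frac d2+1))$ and $s_n>\frac d2$, and estimating $u_k$ in $E_T^{s_k}$ via Corollary \ref{dualiteHsalt}.

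\begin{itemize}
\item The coefficient term then needs $u_i$, $i<k$, only in $H^{s_k+1+3\varepsilon/4}$, strictly below its dissipative level $s_{k-1}+1$. Interpolating between $X_T^{s_{k-1}}$ and $Y_T^{s_{k-1}+1}$ gives $u_i\in L^{2r}_TH^{s_k+1+3\varepsilon/4}$ for some $r>1$.
\item H\"older in time against $P(\norm{U(t)}_{L^p})^{2r'}$ then turns the time-dependent prefactor into a constant depending only on $T$ and $\norm{U}_{L^p(\QT)}$.
\item The reaction terms need only $U\in H^{s_n}$, since $s_k-1<s_n$. They become Gr\"onwall terms with weight $1+\norm{U(t)}_{L^p}^p$.
\end{itemize}

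Because the lower-index contributions now carry constant coefficients, the weighted triangular summation and Gr\"onwall close. This bounds each $\norm{u_k}_{E_T^{s_k}}$, hence $\norm{U}_{L^\infty(\QT)}$ since $s_n>\frac d2$, and Proposition \ref{redLinfty} concludes. Your closing remark about passing through an intermediate $L^\infty$ bound points the right way. What your proposal is missing is the regularity ladder and the gain in time integrability it provides.
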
 
	Hints towards such an explosion criterion were already given by Choi, Lui and Yamada in \cite{choi_existence_2003} in the case $n=2$, but their argument requires both a smallness assumption on the cross-diffusion parameter. Still in the case $n=2$, Bouton, Desvillettes and Dietert proved in Section 3 of \cite{bouton_global_2025} that if explosion happens in $L^p(\T^d)$ for some $p\ge 1$, then it happens for all $p > 1 + \frac d 2$. The proof of Proposition \ref{prop:exploLp} uses refined tame estimates for the composition in Sobolev spaces, which we state and prove in Appendix \ref{sec:compo}.
	Finally, as an application of the two previous results, Section \ref{sec:5} proves a global existence result in dimension $d\le2$.
	
	\begin{Thm}\label{th:global} 
		Consider \ref{ass:1}–\ref{ass:4} and \ref{ass:6} and assume $d \le 2$, $n =2$ and $U^0 \in H^s(\T^d)$. If $U^0$ is non-negative, then the solution $U$ of problem \eqref{eq:SKTtr} is global (\textit{i.e.}, $T^* = \infty$).
	\end{Thm}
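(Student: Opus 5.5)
By Theorem \ref{th:localExist} and Proposition \ref{prop:exploLp} (note that \ref{ass:6} together with smoothness provides the polynomial growth needed in \ref{ass:5}), it is enough to show that for every finite $T<T^*$ the quantity $\int_0^T \norm{U(t)}_{L^p}^p \dt$ stays bounded by a constant depending only on $T$ and $U^0$, for the exponent $p$ given by Proposition \ref{prop:exploLp}. Non-negativity of $U$ on $Q_{T^*}$ also comes from Theorem \ref{th:localExist} under \ref{ass:4}. Since $n=2$, the system reads $\partial_t u_1 - \mu_1\Delta u_1 = u_1\bar r_1(U)$ with $\mu_1$ a positive constant, and $\partial_t u_2 - \Delta[\mu_2(u_1)u_2] = u_2 \bar r_2(U)$.

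\emph{Step 1: $L^1$ bounds.} Integrate each equation over $\T^d$. By \ref{ass:6} we have $u_k \bar r_k \le C u_k$, so $\frac{\dd}{\dd t}\mean{u_k} \le C\mean{u_k}$, and Gronwall's lemma gives $\sup_{[0,T]}\norm{u_k}_{L^1} \le e^{CT}\norm{u_k^0}_{L^1}$.

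\emph{Step 2: $L^\infty$ bound on $u_1$.} The first equation is scalar, with constant diffusion, and its reaction satisfies $u_1\bar r_1 \le C u_1$. The maximum principle, applied to $e^{-Ct}u_1$, gives $\norm{u_1}_{L^\infty(Q_T)} \le e^{CT}\norm{u_1^0}_{L^\infty}$, which is finite because $s>d/2$. Consequently $\mu_2(u_1)$ is bounded above and below on $Q_T$ by \ref{ass:3}.

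\emph{Step 3: $L^2$ bound on $u_2$ by duality.} Use the duality lemma for Kolmogorov equations of Section \ref{sec:kolmogorov}, in the spirit of \cite{bansaye_stability_2025}. For $\partial_t z - \Delta[\mu z] \le C z$ with $z\ge0$ and $\alpha\le\mu\le M$, it gives
\[
\norm{z}_{L^2(Q_T)} \lesssim_{T,M} \norm{z^0}_{L^2} .
\]
Applied with $z=u_2$, this yields $u_2\in L^2(Q_T)$. The duality bound only uses the boundedness of $\mu$, not its regularity, which is why the triangular structure is essential here.

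\emph{Step 4: bootstrap to the exponent $p$.} This is the main obstacle, since $p$ from Proposition \ref{prop:exploLp} may be large. My first attempt is to rewrite the second equation in non-divergence form $\partial_t u_2 - \mu_2\Delta u_2 - 2\nabla\mu_2\cdot\nabla u_2 - u_2\Delta\mu_2 = u_2\bar r_2$, where $\mu_2=\mu_2(u_1)$. Parabolic regularity for the heat equation with bounded source (Step 2) gives $\nabla u_1, \Delta u_1 \in L^q(Q_T)$ for all $q<\infty$, so the coefficients $\nabla\mu_2$ and $\Delta\mu_2$ lie in every $L^q$. One then runs $L^q$ energy estimates on $u_2$. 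Testing against $u_2^{q-1}$ produces
\[
\frac{\dd}{\dd t}\int u_2^q + c\int |\nabla u_2^{q/2}|^2 \lesssim \int |\nabla \mu_2|^2 u_2^q + C\int u_2^q .
\]
Since $\bar r_2 \le C$, the reaction term is harmless. The drift term is controlled by H\"older with $\nabla\mu_2\in L^\infty_tL^r_x$ (from maximal regularity, since $u_1^0\in H^s$ with $s>1$) combined with Gagliardo--Nirenberg in $d\le2$. In two dimensions $H^1\subset L^m$ for all $m<\infty$, which lets the gradient term absorb the drift contribution, with a Gronwall argument starting from the $L^1$ bound of Step 1. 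This is where $d\le2$ is used. Iterating over $q$ yields $u_2\in L^\infty(0,T;L^q)$ for every $q$, in particular for $q=p$. Together with Step 2, this bounds $\int_0^T\norm{U}_{L^p}^p$, so $T^*=\infty$ by Proposition \ref{prop:exploLp}.

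If the regularity of $\nabla u_1$ turns out to be insufficient for small $s$, I would instead use the smoothing given by Theorem \ref{th:localExist}, namely that $U$ is smooth on $(0,T^*)\times\T^d$, and restart the argument from a positive time $t_0>0$, where the data are as regular as needed.
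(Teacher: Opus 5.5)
There is a genuine gap in Step 4: the $u_1$-equation does \emph{not} have a bounded source. Its right-hand side is $u_1\bar r_1(u_1,u_2)$, and \ref{ass:6} only gives $\bar r_1\le C$ and $\bar r_1\ge -C(1+|U|)$. So Step 2 bounds the source from above, while its modulus is only controlled by $C(1+u_2)$ (think of the logistic term $-s_{12}u_1u_2$). You therefore cannot claim $\nabla u_1,\Delta u_1\in L^q(Q_T)$ for all $q<\infty$, nor $\nabla\mu_2(u_1)\in L^\infty_tL^r_x$ with $r>2$, before having high $L^q$ bounds on $u_2$. Those bounds are exactly what Step 4 is meant to produce, so the argument as written is circular. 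Restarting at $t_0>0$ does not fix this, because the obstruction is the integrability of the source, not the smoothness of the data. From the $L^2(Q_T)$ bound of Step 3 alone, the best uniform-in-time information is $\nabla u_1\in L^\infty_tL^2_x$. That is the borderline case $r=d$, in which the drift term cannot be absorbed without a smallness condition.

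The missing idea is the bootstrap that the paper performs. Since $u_2\in L^2(Q_T)$, the source of the $u_1$-equation lies in $L^2(Q_T)$, and an energy estimate gives $u_1\in L^\infty_tH^1\cap L^2_tH^2$. Interpolation and $H^{1/2}\hookrightarrow L^4$ (valid for $d\le 2$) then give $\nabla u_1\in L^4(Q_T)$. This space-time integrability is exactly critical for your $L^q$ estimate. Set $w=u_2^{q/2}$ and use $|\nabla\mu_2(u_1)|\lesssim|\nabla u_1|$. Ladyzhenskaya's inequality gives $\int|\nabla\mu_2(u_1)|^2w^2\lesssim\norm{\nabla u_1}_{L^4}^2\norm{w}_{L^2}\norm{w}_{H^1}$. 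After Young's inequality and absorption, $\frac{\dd}{\dd t}\norm{w}_{L^2}^2\lesssim_q(1+\norm{\nabla u_1(t)}_{L^4}^4)\norm{w}_{L^2}^2$, and Gr\"onwall closes directly for every $q$. This is where $d\le 2$ really enters: an $L^2$ source gives $\nabla u_1\in L^{2(d+2)/d}(Q_T)$, which reaches the critical class $L^{d+2}(Q_T)$ only when $d\le 2$.

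The paper carries out only the case $q=2$ of this computation (its Step 2). It then climbs to $u_2\in E_T^1$ through Proposition~\ref{prop:dualiteHs}, $L^4$ parabolic regularity for $\Delta u_1$ and an explicit commutator, before invoking $H^1\hookrightarrow L^p$. Once repaired, your $L^q$-testing route reaches all $L^\infty_tL^q$ bounds in one step and never needs second derivatives of $\mu_2(u_1)$.

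Separately, your claim that \ref{ass:6} plus smoothness yields \ref{ass:5} is false for $\mu_2$, since nothing controls $\nabla^3\mu_2$. Use Remark (ii) after the proof of Proposition~\ref{prop:exploLp} instead, or modify $\mu_2$ outside the range of $u_1$ on $[0,T]$, which Step 2 makes harmless.
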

	
	Note that under \ref{ass:4}, since the solutions of \eqref{eq:SKTtr} stay non-negative, when $U^0$ is non-negative the assumptions on $R$ and the $\mu_k$ can be weakened to assumptions on the restricted functions $R_{|\R^n_+}$ and $\mu_{k|\R^{k-1}_+}$. This observation implies that Theorem \ref{th:global} applies to the logistic case, which consists in considering \eqref{eq:SKTog} with $a_{11}=a_{22}= a_{21}=0$. In this case, the result was already proved in \cite{lou_global_1998} for $d\le 2$ and more recently by \cite{bouton_global_2025} for $d\le 4$. In these two approaches, global existence follows from determining bounds independently on Amann's theory and then applying Amann's blow-up criterion. Our blow-up criterion in Proposition \ref{prop:exploLp} happens in a weaker norm than Amann's one and we therefore hope that it will simplify further attemps at proving global existence results.\\

	\emph{Outline of this article.} We first study the Kolmogorov equation \ref{eq:Kolmogorov} in Section \ref{sec:kolmogorov}, where we start from $E_T^{-1}$ estimates to produce $E_T^s$ ones for $s>d/2$. Section \ref{sec:3} is dedicated to the proof of Theorem \ref{th:localExist} and Proposition \ref{prop:exploLp}. The proof of Proposition \ref{prop:exploLp} uses a new tame composition estimate that we prove in Appendix \ref{sec:compo}. Section \ref{sec:5} is a proof of Theorem \ref{th:global}. For the sake of completeness, we included in Appendix \ref{sec:transfer} a transference principle that will allow us to use commutator estimates previously proved on $\R^d$.

	\subsection{Acknowledgment } The author warmly thanks Ayman Moussa for his advice throughout this project and for thoroughly reading the drafts. This work was partially funded by the Chair “Modélisation Mathématique et Biodiversité" of VEOLIA-Ecole Polytechnique-MNHN-F.X. and by the European Union (ERC, SINGER, 101054787). Views and opinions expressed are however those of the author(s) only and do not necessarily reflect those of the European Union or the European Research Council. Neither the European Union nor the granting authority can be held responsible for them.
	
	\section{The Kolmogorov equation} \label{sec:kolmogorov}
	
	In this section, we fix $T>0$ a time horizon and investigate equation \eqref{eq:Kolmogorov} on $\QT$. Since the work of Moussa in \cite{moussa_non-local_2018}, existence of solutions and stability estimates are already known in an $L^2(\QT)$ setting. 
	The lemma we propose to prove is to assimilate with the more recent duality lemma presented in \cite{bansaye_stability_2025}, for which we give two variants. The first one happens in the $E_T^{-1}$ setting for $z$ and allows to eliminate the negative part of the source term, which will be useful to treat the reaction terms of \eqref{eq:SKTtr} when they are linearly upper-bounded. From this first estimate we will then derive the second one in the $E_T^{s}$ setting for $s > d/2$.

	\subsection{Energy estimate in inhomogeneous norm}
	
	The energy estimates for \eqref{eq:Kolmogorov} presented in \cite{bansaye_stability_2025} bound $z$ in homogeneous Sobolev norm but we chose to write our next estimate in inhomogeneous norms because we want to use the sign-preserving property of the inhomogeneous operator $J^{-2}$. 
	
	\begin{Lemma} \label{lem:duality-1}
		Consider $\mu \in L^\infty(\QT)$ such that $\alpha := \inf_{\QT} \mu > 0$, $z^0 \in H^{-1}(\T^d)$ and $f \in L^2_TH^{-2}$. Then there exists a unique $z \in L^2(\QT)$ that weakly solves the Kolmogorov equation \eqref{eq:Kolmogorov}. 
		Furthermore, this solution $z$ belongs to $\C([0,T];H^{-1}(\T))$ and there exists a constant $C >0$ such that 
		\begin{equation}\label{eq:duality_estimate}
			\norm{z(T)}_{H^{-1}}^2 + \int_{\QT} \mu z^2  \lesssim  \norm{z^0}_{H^{-1}}^2 + \int_0^T \mean{\mu(t)} \norm{z(t)}_{H^{-1}}^2 \, \dd t + \int_0^T \norm{f(t)}_{H^{-2}}^2 \, \dd t .
		\end{equation}
		Moreover, if $z$ is non-negative, then $f$ can be replaced by any function $\tilde f$ such that $f \le \tilde f$ in the above estimate.
	\end{Lemma}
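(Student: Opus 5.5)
We test the equation against $J^{-2}z$, i.e.\ we compute $\frac{\dd}{\dd t}\frac12\norm{z}_{H^{-1}}^2 = \langle \partial_t z, J^{-2} z\rangle$. This is the natural duality pairing for the Kolmogorov operator. We first do the computation formally for smooth data (smooth $\mu$ bounded below, smooth $z^0,f$), where a classical solution exists. We then pass to the limit by regularization, using the a priori bound to get compactness and the weak formulation.

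\emph{A priori estimate.} Writing $-\Delta = J^2 - I$, we have
\begin{equation*}
\langle \Delta(\mu z), J^{-2}z\rangle = -\langle \mu z, (J^2 - I)J^{-2} z\rangle = -\int_{\T^d}\mu z^2 + \langle \mu z, J^{-2}z\rangle .
\end{equation*}
Hence
\begin{equation*}
\frac12\frac{\dd}{\dd t}\norm{z}_{H^{-1}}^2 + \int_{\T^d}\mu z^2 = \langle \mu z, J^{-2}z\rangle + \langle f, J^{-2}z\rangle .
\end{equation*}
For the first term on the right, Cauchy--Schwarz with weight $\mu$ gives
\begin{equation*}
|\langle \mu z, J^{-2}z\rangle| \le \Big(\int \mu z^2\Big)^{1/2}\Big(\int \mu (J^{-2}z)^2\Big)^{1/2}.
\end{equation*}
Since $\mu$ depends on $x$, we cannot simply bound $\int\mu (J^{-2}z)^2$ by $\mean{\mu}\norm{J^{-2}z}_{L^\infty}^2$. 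The $\mean{\mu}$ factor in \eqref{eq:duality_estimate} suggests exactly that bound, with $\norm{J^{-2}z}_{L^\infty}$ controlled by $\norm{z}_{H^{-1}}$. That control requires $d\le 2$-type embeddings ($H^1\subset L^\infty$ fails for $d\ge 2$), so I would instead use $\int \mu (J^{-2}z)^2 \le \norm{\mu}_{L^\infty}\norm{z}_{H^{-2}}^2$. I expect this to be the main obstacle: matching exactly the $\mean{\mu}$ weight. The alternative I would try first is the multiplicative Young/absorption
\begin{equation*}
|\langle\mu z,J^{-2}z\rangle|\le \tfrac14\int\mu z^2 + \int \mu (J^{-2}z)^2 .
\end{equation*}
I would then look for a pointwise bound of $J^{-2}z$ by a kernel convolution. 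The kernel $G$ of $J^{-2}$ is positive and integrable, so $(J^{-2}z)^2\le \norm{G}_{L^1}\, G*z^2$ (Jensen). Therefore $\int\mu(J^{-2}z)^2 \le \int (G*\mu) z^2$, which can then be absorbed or interpolated against $\int \mu z^2$ and $\norm{z}_{H^{-1}}$. This is the step where the precise form of the middle term is fixed. For the source term,
\begin{equation*}
|\langle f,J^{-2}z\rangle|\le \norm{f}_{H^{-2}}\norm{z}_{L^2}\le \tfrac{1}{4\alpha}\norm f_{H^{-2}}^2\cdot C + \tfrac14\int\mu z^2 ,
\end{equation*}
using $\alpha>0$. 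Integrating in time then yields \eqref{eq:duality_estimate}.

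\emph{Sign refinement.} If $z\ge0$, then $J^{-2}z\ge 0$ because $G>0$; this is the reason inhomogeneous norms are used. Hence $\langle f,J^{-2}z\rangle\le\langle \tilde f,J^{-2}z\rangle$, and the same bound holds with $\tilde f$.

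\emph{Existence, uniqueness, continuity.} For existence, regularize $\mu_\varepsilon\ge\alpha$, $z^0_\varepsilon$, $f_\varepsilon$ and solve the nondegenerate parabolic problem. The estimate gives uniform bounds of $z_\varepsilon$ in $L^\infty H^{-1}\cap L^2(\QT)$ via Grönwall. A weak limit then solves the weak formulation, since it is linear in $z$ and $\mu_\varepsilon\to\mu$ strongly in $L^2$ along a.e.-convergent regularizations. For uniqueness, the difference of two solutions solves the equation with zero data. To justify the energy identity for an $L^2$ solution, note that $\partial_t z=\Delta(\mu z)+f\in L^2H^{-2}$ while $z\in L^2 L^2 = L^2 H^0$. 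The Lions--Magenes lemma in the pair $(H^0,H^{-2})$ with pivot $H^{-1}$ then gives $z\in \C([0,T];H^{-1})$ and the chain rule. Grönwall applied to the zero-data estimate forces $z=0$.
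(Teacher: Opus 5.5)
\paragraph{The gap.} The failure is exactly at the step you flag: the cross term $\langle \mu z, J^{-2}z\rangle$. None of your three options closes it.

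\begin{itemize}
\item Bounding it through $\int\mu (J^{-2}z)^2\le \mean{\mu}\norm{J^{-2}z}_{L^\infty}^2$ needs $\norm{J^{-2}z}_{L^\infty}\lesssim \norm{J^{-2}z}_{H^1}=\norm{z}_{H^{-1}}$. That uses $H^1\hookrightarrow L^\infty$, which holds only for $d=1$.
\item Using $\norm{\mu}_{L^\infty}$ instead proves a strictly weaker inequality than the one stated. It is enough for existence and uniqueness. It is not enough downstream: Corollary \ref{dualiteHsalt} and Proposition \ref{prop:exploLp} need the $\mean{\mu}$ weight, because there $\mean{\mu_k(U)}$ is controlled by $L^p$ norms of $U$ while $\norm{\mu_k(U)}_{L^\infty}$ is not.
\item The Jensen route fails outright. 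The bound $\int\mu(J^{-2}z)^2\le\int (G*\mu)z^2$ throws away all the smoothing of $J^{-2}$ and returns a top-order term with an $O(1)$ coefficient. Even for constant $\mu$, Young plus Jensen give only $|\langle\mu z,J^{-2}z\rangle|\le(\eta+\frac1{4\eta})\int\mu z^2$, with $\eta+\frac1{4\eta}\ge 1$. So nothing can be absorbed into the $\int\mu z^2$ on the left. Nor can a weighted $L^2$ norm of $z$ be interpolated against $\norm{z}_{H^{-1}}$.
\end{itemize}

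\paragraph{The missing idea.} Iterate the test function so that the cross terms telescope. Write the equation as $\partial_t z+(I-\Delta)(\mu z)=\mu z+f$ and test it against $\Phi_k=J^{-2k}z$ for each $k=1,\dots,k_0$. Since $(I-\Delta)\Phi_k=\Phi_{k-1}$, the $k$-th identity is
\[
\tfrac12\tfrac{\dd}{\dd t}\norm{z}_{H^{-k}}^2+\langle\Phi_{k-1},\mu z\rangle=\langle\Phi_k,\mu z\rangle+\langle\Phi_k,f\rangle .
\]
Summing over $k$, only $\int\mu z^2$ survives on the left and only $\langle\Phi_{k_0},\mu z\rangle$ on the right. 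Take $k_0>\frac d4+\frac12$. Sobolev embedding then gives $\norm{\Phi_{k_0}}_{L^\infty}\lesssim\norm{\Phi_{k_0}}_{H^{2k_0-1}}=\norm{z}_{H^{-1}}$ in every dimension. Hence $\langle\Phi_{k_0},\mu z\rangle\le\frac14\int\mu z^2+C\mean{\mu}\norm{z}_{H^{-1}}^2$.

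The extra source terms $\langle\Phi_k,f\rangle$ for $k\ge 2$ cost $\frac\alpha2\norm{z}_{H^{-1}}^2+\frac1{2\alpha}\norm{f}_{H^{-2}}^2$, and $\alpha\le\mean{\mu}$, so they fit the stated form. Your computation is the case $k_0=1$ of this argument, which is why it only works in $d=1$. For the sign refinement you then need $\Phi_k\ge0$ for every $k$; this follows by iterating the positivity of $J^{-2}$.

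\paragraph{What is sound.} Your existence, uniqueness and continuity argument is correct: regularization, weak limits, and Lions--Magenes in $L^2\subset H^{-1}\subset H^{-2}$. It is a self-contained replacement for the paper's appeal to \cite{bansaye_stability_2025}.
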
 
	\begin{proof}
		Lemma 2.1 from \cite{bansaye_stability_2025} asserts that global existence, uniqueness and $\C([0,T];H^{-1}(\T))$ regularity for \eqref{eq:Kolmogorov} hold if $f$ can be decomposed as $f = \Delta g + r$ with $g, r \in L^2(\QT)$, which is possible since $f = \Delta \left( \Delta^{-1}(f - \mean f)\right) + \mean f$ and $\Delta^{-1}$ defines a continuous operator from the space of mean-free $H^{-2}(\T^d)$ distributions to the space of mean-free $L^2(\T^d)$ functions. We now focus on the duality estimate \eqref{eq:duality_estimate} which needs to be proven only in a smooth setting. \\

		For all $k \in \mathbb{N}$, set $\Phi_k = J^{-2k} z = (I - \Delta)^{-k} z$ and reformulate \eqref{eq:Kolmogorov} as 
		\begin{equation} \label{eq:ineqKolmoMod}
			\partial_t z  + \mu z - \Delta( \mu z) = \mu z + f.
		\end{equation}
		If $k \ge 1$, we multiply this equation by $\Phi_k$ and integrate over $\T^d$ using the fact that $J$ is a symmetric operator to see that for all $t \in [0,T]$,
		\begin{equation} \label{1652crepe}
			\langle J^{-2k} z(t), \partial_ t z(t) \rangle + \langle (I - \Delta)^{-k+1} z(t) , \mu(t) z(t) \rangle = \langle \Phi_k(t), \mu(t) z(t) \rangle + \langle \Phi_k(t) , f(t) \rangle.
		\end{equation}
		Here, if $z$ is non-negative, applying the maximum principle (see \cite[Theorem 6.4.2]{evans_partial_1988} for a detailed discussion of maximum principles) $k$ times successively to $\Phi_1, \dots, \Phi_k$, we see that $\Phi_k$ is also non-negative and therefore $\langle \Phi_k(t) , f(t) \rangle \le \langle \Phi_k(t) , \tilde f (t) \rangle$. The rest of the proof in the case when $z \ge 0$ is the same as the general proof but with $f$ replaced by $\tilde f $. By Young's inequality,
		$$ \langle \Phi_k(t) , f(t) \rangle = \langle \Phi_{k-1}(t) , J^{-2} f \rangle  \le \frac \alpha 2 \norm{\Phi_{k-1}(t)}_{L^2}^2 + \frac 1 {2\alpha} \norm{J^{-2}f(t)}_{L^2}^2 = \frac \alpha 2 \norm{z(t)}_{H^{-2k + 2}}^2 + \frac 1 {2\alpha} \norm{f(t)}_{H^{-2}}^2  .$$
		Using $ \norm{\cdot}_{H^{-2k+2}} \le \norm{\cdot}_{H^{-k+1}}$ and plugging this into \eqref{1652crepe},
		$$\frac 1 2\frac \dd {\dd t} \norm{z(t)}_{H^{-k}} + \langle \Phi_{k-1}(t) z(t) , \mu(t) z(t) \rangle \le \langle \Phi_k(t), \mu(t) z(t) \rangle +\frac \alpha 2 \norm{z(t)}_{H^{-k+1}}^2 + \frac 1 {2\alpha} \norm{f(t)}_{H^{-2}}^2. $$
		Let $k_0 \in \mathbb N$ be large enough to have $H^{2k_0 -1}(\T^d) \hookrightarrow L^\infty(\T^d)$ (\textit{i.e.}, $k_0 > \frac d 4 + \frac 1 2$). Then, summing the previous inequality over $k= 1, \dots, k_0$ we get
		$$\frac 1 2 \sum_{k=1}^{k_0}\frac \dd {\dd t} \norm{z(t)}_{H^{-k}} + \sum_{k=0}^{k_0-1}\langle \Phi_{k}(t) z(t) , \mu(t) z(t) \rangle \le \sum_{k=1}^{k_0} \langle \Phi_k(t), \mu(t) z(t) \rangle +\frac \alpha 2 \sum_{k=0}^{k_0-1} \norm{z(t)}_{H^{-k}}^2 + \frac {k_0} {2\alpha} \norm{f(t)}_{H^{-2}}^2. $$
		Denoting $\psi(t) = \sum_{k=1}^{k_0} \norm{z(t)}_{H^{-k}}^2$ and simplifying the terms appearing on both sides,
		$$ \frac 1 2 \frac \dd {\dd t} \psi(t) +\int_{\T^d} \mu(t) z(t)^2  \le \int_{\T^d} \Phi_{k_0}(t) \mu(t) z(t) +\frac \alpha 2 \int_{\T^d} z(t)^2 + \frac \alpha 2 \psi(t) + \frac {k_0} {2\alpha} \norm{f(t)}_{H^{-2}}^2. $$
		Using once again Young's inequality,
		$$\int_{\T^d} \Phi_{k_0}(t) \mu(t) z(t) \lesssim 2 \int_{\T^d} \Phi_{k_0}(t)^2 \mu(t) \frac 1 4 \int_{\T^d} \mu(t) z(t)^2,$$
		and since $\mu \ge \alpha$, $\int_{\T^d} \left(\frac \alpha 2 + \frac {\mu(t)} 4 \right) z(t)^2$ can be absorbed in the left-hand side to have 
		$$ \frac 1 2 \frac \dd {\dd t} \psi(t) +\frac 1 4 \int_{\T^d} \mu(t) z(t)^2  \le 2 \norm{\Phi_{k_0}(t)}_{L^\infty(\T^d)}^2 \int_{\T^d} \mu(t)+ \frac \alpha 2 \psi(t) + \frac {k_0} {2\alpha} \norm{f(t)}_{H^{-2}}^2. $$
		By Sobolev embedding, we have 
		$$\norm{\Phi_{k_0}(t)}_{L^\infty(\T^d)} \lesssim \norm{\Phi_{k_0}(t)}_{2k_0 -1} = \norm{J ^{-2k_0}z(t)}_{2k_0 -1} =\norm{z(t)}_{H^{-1}} \lesssim \psi(t),  $$
		
		so, since $\mu \ge \alpha$, integrating in time we get
		$$ \psi(T) + \int_{\QT} \mu z^2  \lesssim  \psi(0) \int_0^T \mean{\mu(t)} \psi(t) \, \dd t + \int_0^T \norm{f(t)}_{H^{-2}}^2 \, \dd t . $$
		Since $\psi(t) =\sum_{k=1}^{k_0} \norm{z(t)}_{H^{-k}}^2 \le k_0 \norm{z(t)}_{H^{-1}}^2 $, this is \eqref{eq:duality_estimate}.
		
	\end{proof}

	\subsection{ $H^s$ estimate}
	
	We prove that similar estimates hold in a higher regularity setting. The next general estimate will be the source of multiple high regularity variants of \eqref{eq:duality_estimate}.
	\begin{Prop} \label{prop:dualiteHs}
		Let $s \ge -1$,  $z^0 \in H^s(\T^d), f \in Y_T^{s-1}$ and $ \mu \in  L^\infty(\QT)$ such that $\inf_\QT \mu >0$.
		Then for all $z \in E_T^s$ satisfying \eqref{eq:Kolmogorov} and $[\mu, J^{s+1}] z \in L ^2(\QT)$, the following estimate holds:
		\begin{equation} \label{eq:dualityHsOrigin}
			\norm{z}_{E_T^s}^2 \lesssim   \norm{z^0}_{\hs}^2+ \int_0^T \mean{\mu(t)} \norm{z(t)}_{\hs}^2\, \dd t + \norm{f}_{Y_T^{s-1}}^2 + \int_0^T \norm{[\mu(t), J^{s+1}] z(t)}_{L^2}^2\, \dd t.
		\end{equation}
	\end{Prop}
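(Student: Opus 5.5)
The plan is to reduce the estimate to Lemma \ref{lem:duality-1} applied to the function $w := J^{s+1} z$. Applying $J^{s+1}$ to \eqref{eq:Kolmogorov} and writing $J^{s+1}(\mu z) = \mu J^{s+1} z - [\mu, J^{s+1}] z$, we get (at least formally, and rigorously after a density/regularization argument) that $w$ solves
\begin{equation*}
\partial_t w - \Delta[\mu w] = J^{s+1} f - \Delta\big([\mu,J^{s+1}] z\big), \qquad w(0) = J^{s+1} z^0 .
\end{equation*}
Since $z^0 \in H^s$, we have $w(0) \in H^{-1}$ with $\norm{w(0)}_{H^{-1}} = \norm{z^0}_{\hs}$. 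The new source $g := J^{s+1} f - \Delta([\mu,J^{s+1}]z)$ lies in $L^2_T H^{-2}$, with
$$\norm{g(t)}_{H^{-2}} \le \norm{f(t)}_{H^{s-1}} + \norm{[\mu(t),J^{s+1}]z(t)}_{L^2},$$
because $\Delta$ maps $L^2$ into $H^{-2}$ continuously. The hypothesis $[\mu,J^{s+1}]z\in L^2(\QT)$ is used exactly here. Moreover, $z \in E_T^s$ gives $w \in L^2(\QT)$, so by the uniqueness part of Lemma \ref{lem:duality-1}, $w$ is the solution provided there. Identifying $w$ as a weak solution in the sense of the Definition only requires testing against $J^{s+1}\varphi$, which is legitimate since $J^{s+1}$ is symmetric and preserves smooth test functions on the torus.

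Applying \eqref{eq:duality_estimate} to $w$ on every interval $[0,t]$ with $t\le T$ gives
$$\norm{z(t)}_{\hs}^2 + \int_0^t\!\!\int_{\T^d} \mu\, |J^{s+1}z|^2 \lesssim \norm{z^0}_{\hs}^2 + \int_0^T \mean{\mu(\tau)}\norm{z(\tau)}_{\hs}^2\,\dd\tau + \norm{f}_{Y_T^{s-1}}^2 + \int_0^T \norm{[\mu,J^{s+1}]z}_{L^2}^2\,\dd\tau.$$
The right-hand side does not depend on $t$. We then use $\mu \ge \inf_\QT\mu>0$ to get $\int_0^T\norm{z}_{\hsp}^2 \lesssim \int_{\QT}\mu |J^{s+1}z|^2$, where the constant depends on $\inf\mu$, as allowed by the convention for $\lesssim$ in this section. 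Taking the supremum in $t$ and adding the two bounds yields \eqref{eq:dualityHsOrigin}. We also need $w\in \C([0,T];H^{-1})$, which is the regularity already assumed through $z\in X_T^s$.

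The main obstacle is justifying the identity satisfied by $w$ and the applicability of the estimate. The estimate in Lemma \ref{lem:duality-1} was proved only in a smooth setting, and $\Delta(\mu w)$ involves the product of the merely bounded $\mu$ with $w\in L^2$. The manipulations, however, are purely distributional. $\mu w \in L^2(\QT)$, so $\Delta(\mu w)\in L^2_TH^{-2}$. Likewise $\mu z \in L^2_T H^{s+1}$ is not needed; we only need $J^{s+1}(\mu z) = \mu w - [\mu,J^{s+1}]z \in L^2(\QT)$, which holds by hypothesis. So the equation for $w$ holds in $L^2_TH^{-2}$. The weak formulation then follows by pairing with $J^{s+1}\varphi$, and the existence-uniqueness statement of Lemma \ref{lem:duality-1} lets us transfer its estimate to our $w$.
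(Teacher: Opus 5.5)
Your proposal is correct and follows the paper's proof: you apply $J^{s+1}$ to the equation, view $w = J^{s+1}z$ as a solution of a Kolmogorov equation with source $J^{s+1}f - \Delta([\mu,J^{s+1}]z) \in L^2_TH^{-2}$, and invoke Lemma~\ref{lem:duality-1} together with $\norm{J^{s+1}z}_{H^{-1}} = \norm{z}_{\hs}$, $\norm{J^{s+1}f}_{H^{-2}} = \norm{f}_{H^{s-1}}$ and the continuity of $\Delta : L^2 \to H^{-2}$. Your extra details are sound and spell out what the paper leaves implicit: identifying $w$ through the test functions $J^{s+1}\varphi$ and uniqueness, applying the lemma on each $[0,t]$ to obtain the supremum, and using $\inf \mu > 0$ to recover the $Y_T^{s+1}$ norm. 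Your minus sign in front of the commutator term is in fact the correct one.
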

	
	\begin{proof}
		
		We apply $J^{s+1}$ to \eqref{eq:Kolmogorov} and get
		$$ \partial_t J^{s+1} z - \Delta (\mu J^{s+1} z) = J^{s+1} f + \Delta([\mu, J^{s+1}] z ) .$$
		By the duality estimate \eqref{eq:duality_estimate}
		\begin{multline*}
			\norm{J^{s+1} z}_{L^\infty_T H^{-1}}^2 + \norm{J^{s+1} z}_{L^2(Q_T)}^2 \lesssim  \norm{J^{s+1} z^0}_{H^{-1}}^2 
			+\int_0^T \mean{\mu(t)} \norm{J^{s+1}z(t)}_{H^{-1}}^2\, \dd t\\
			+\int_0^T \norm{J^{s+1}f(t)}_{H^{-2}}^2\, \dd t 
			+ \int_0^T \norm{\Delta([\mu(t), J^{s+1}] z(t) )}_{H^{-2}}^2\, \dd t .
		\end{multline*} 
		Since $\norm{J^{s+1} z}_{H^{-1}}=\norm{z}_{\hs}$, $\norm{J^{s+1} f}_{H^{-2}}=\norm{f}_{H^{s-1}}$ and $\Delta$ is a continuous operator from $L^2(\T^d)$ to $H^{-2}(\T^d)$, this is the estimate in the proposition.
	\end{proof}

	Depending on how the commutator $[\mu, J^{s+1}]z$ is bounded, one gets various estimates:

	\begin{Cor} \label{cor:duaHsFinal}
		Let $s > d/2$,  $z^0 \in H^s(\T^d), f \in Y_T^{s-1}$ and $ \mu \in Y_T^{s+1}\cap L^\infty(\QT)$ such that $ \inf_\QT \mu >0$.
		Then there exists a constant $c>0$ such that for all $z \in E_T^s$ satisfying \eqref{eq:Kolmogorov}, the following estimate holds:
		\begin{equation*}
			\norm{z}_{E_T^s}^2 \lesssim  \exp(cT + c \norm{\mu}_{Y_T^{s+1}}^2 )\left( \norm{z^0}_{\hs}^2 + \norm{f}_{Y_T^{s-1}}^2 \right).
		\end{equation*}
	\end{Cor}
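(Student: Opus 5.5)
We start from Proposition \ref{prop:dualiteHs}, which applies since $s>d/2\ge -1$. The work is to bound the commutator $[\mu(t),J^{s+1}]z(t)$ in $L^2$ so that it can be absorbed, and then to close with Grönwall. We also use that $\mean{\mu(t)}\le\norm{\mu(t)}_{L^2}\le\norm{\mu(t)}_{H^{s+1}}$. The $L^\infty$ bound is not needed quantitatively, only for the hypotheses, so this term is also integrable in time with integral bounded by $\sqrt T\norm{\mu}_{Y_T^{s+1}}$, or more crudely by $T+\norm{\mu}_{Y_T^{s+1}}^2$.

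For the commutator we use a Kato--Ponce type estimate on $\T^d$. We get it from the $\R^d$ version via the transference principle of Appendix \ref{sec:transfer}:
$$\norm{[\mu,J^{s+1}]z}_{L^2}\lesssim \norm{\nabla\mu}_{L^\infty}\norm{z}_{H^{s}}+\norm{\mu}_{H^{s+1}}\norm{z}_{L^\infty}.$$
Since $s>d/2$, the embedding $H^s\hookrightarrow L^\infty$ gives $\norm{z}_{L^\infty}\lesssim\norm{z}_{\hs}$. The term $\norm{\nabla\mu}_{L^\infty}$ is delicate, because $\mu\in H^{s+1}$ only yields $\nabla\mu\in H^s\hookrightarrow L^\infty$. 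This works since $s>d/2$, so $\norm{\nabla\mu}_{L^\infty}\lesssim\norm{\mu}_{H^{s+1}}$. Hence
$$\norm{[\mu(t),J^{s+1}]z(t)}_{L^2}^2\lesssim\norm{\mu(t)}_{H^{s+1}}^2\norm{z(t)}_{\hs}^2.$$
This also checks the hypothesis $[\mu,J^{s+1}]z\in L^2(\QT)$, because $z\in X_T^s$ and $\mu\in Y_T^{s+1}$.

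Plugging this into \eqref{eq:dualityHsOrigin}, applied on $[0,t]$ for each $t\le T$ (the constants there do not depend on $T$), gives
$$\norm{z(t)}_{\hs}^2\le\norm{z}_{E_t^s}^2\lesssim\norm{z^0}_{\hs}^2+\norm{f}_{Y_T^{s-1}}^2+\int_0^t\big(1+\norm{\mu(\tau)}_{H^{s+1}}^2\big)\norm{z(\tau)}_{\hs}^2\,\dd\tau,$$
where $\mean{\mu}\le 1+\norm{\mu}_{H^{s+1}}^2$. Grönwall's lemma then gives $\sup_{t\le T}\norm{z(t)}_{\hs}^2\lesssim e^{cT+c\norm{\mu}_{Y_T^{s+1}}^2}\big(\norm{z^0}_{\hs}^2+\norm{f}_{Y_T^{s-1}}^2\big)$. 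Reinserting this into the right-hand side of the same inequality with $t=T$ controls the $Y_T^{s+1}$ part as well. The factor $1+\norm{\mu}_{Y_T^{s+1}}^2$ produced in this step is absorbed by enlarging $c$ in the exponential.

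The main obstacle is the commutator estimate: stating it correctly on the torus with $\norm{\mu}_{H^{s+1}}$ controlling both terms, and passing from $\R^d$ to $\T^d$ via the transference appendix.
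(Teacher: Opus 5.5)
Your proof is correct and follows essentially the same route as the paper. You insert a Kato--Ponce commutator bound $\norm{[\mu(t),J^{s+1}]z(t)}_{L^2}\lesssim\norm{\mu(t)}_{\hsp}\norm{z(t)}_{\hs}$ into Proposition \ref{prop:dualiteHs}, bound $\mean{\mu(t)}\lesssim 1+\norm{\mu(t)}_{\hsp}^2$, and close with Grönwall's lemma. The paper uses Li's form of the commutator estimate (Corollary \ref{cor:kato}), with terms $\norm{J^{s}\nabla\mu}_{L^2}\norm{z}_{L^\infty}+\norm{\nabla\mu}_{L^\infty}\norm{J^{s-1}\nabla z}_{L^2}$, which reduces to the same bound as your classical version. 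Your extra steps, applying the estimate on each $[0,t]$ before Grönwall and checking that $[\mu,J^{s+1}]z\in L^2(\QT)$, only make explicit what the paper leaves implicit.
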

\begin{proof}
	To prove this first bound, we rely on the commutator estimate in Corollary \ref{cor:kato} in the appendix, which states that
	$$ \norm{[\mu(t), J^{s+1}] z(t)}_{L^2(\T^d)} \lesssim \norm{\nabla \mu(t)}_{\hs}\norm{z(t)}_{\hs} \lesssim \norm{\mu(t)}_{\hsp}\norm{z(t)}_{\hs}.$$
	Injecting this in \eqref{eq:dualityHsOrigin} and bounding $\mean{\mu(t)} \le \norm{\mu(t)}_{\hsp} $ reveals
	$$ \norm{z}_{E_T^s}^2 \lesssim   \norm{z^0}_{\hs}^2+  \norm{f}_{Y_T^{s-1}}^2 + \int_0^T (\norm{\mu(t)}_{\hsp} + \norm{\mu(t)}_{\hsp}^2) \norm{z(t)}_{\hs}^2\, \dd t,$$
	so the result follows from Young's inequality $\norm{\mu(t)}_{\hsp} \lesssim 1 + \norm{\mu(t)}_{\hsp}^2$ and Grönwall's lemma.
	
\end{proof}

As a consequence of this first bound we get a way to compare solutions to Kolmogorov equations with different data.
\begin{Cor} \label{cor:dualStab}
	Let $s> d/2$, $u^0, v^0 \in H^s(\T^d)$, $f,f' \in Y_T^{s-1}$ and $ \mu, \mu'\in Y_T^{s+1}\cap L^\infty(\QT)$ such that $\inf_\QT  \mu > 0 $. Let $u$ and $v$ be solutions of \eqref{eq:Kolmogorov} with respective data $(u^0, f , \mu)$ and $(v^0, f' , \mu')$. Then, for some constant $c$, the following estimate holds:
	$$\norm{u-v}_{E_T^s}^2 \lesssim  \exp(cT + c \norm{\mu}_{Y_T^{s+1}}^2 )\left( \norm{u^0 - v^0}_{\hs}^2 + \norm{f- f'}_{Y_T^{s-1}}^2 + \norm{\mu -\mu' }_{E_T^{s}}^2\norm{v }_{E_T^{s}}^2 \right).$$
	Both $c$ and the constant behind $\lesssim$ do not depend on $\inf_\QT  \mu'$.
\end{Cor}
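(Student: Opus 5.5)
The plan is to write the equation satisfied by the difference $w := u - v$ and apply Corollary \ref{cor:duaHsFinal} to it, with diffusion coefficient $\mu$ (whose infimum is positive). Since $\partial_t u - \Delta(\mu u) = f$ and $\partial_t v - \Delta(\mu' v) = f'$, we have
\begin{equation*}
	\partial_t w - \Delta(\mu w) = f - f' + \Delta\big((\mu - \mu') v\big), \qquad w(0) = u^0 - v^0 .
\end{equation*}
So $w \in E_T^s$ solves \eqref{eq:Kolmogorov} with data $(u^0 - v^0, g, \mu)$, where $g := f - f' + \Delta((\mu-\mu')v)$. Only $\mu$ enters the structural part of the estimate, and $\mu'$ appears only in the source. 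This is why neither $c$ nor the implicit constant depends on $\inf_\QT \mu'$.

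Corollary \ref{cor:duaHsFinal} then gives
\begin{equation*}
	\norm{w}_{E_T^s}^2 \lesssim \exp(cT + c\norm{\mu}_{Y_T^{s+1}}^2)\Big(\norm{u^0-v^0}_{\hs}^2 + \norm{f-f'}_{Y_T^{s-1}}^2 + \norm{\Delta((\mu-\mu')v)}_{Y_T^{s-1}}^2\Big),
\end{equation*}
so it remains to control the last term. Since $\Delta$ maps $H^{s+1}$ continuously into $H^{s-1}$, it suffices to bound $\int_0^T \norm{(\mu-\mu')(t) v(t)}_{\hsp}^2 \dt$.

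The main step is a tame product estimate in $H^{s+1}$, which is valid since $s+1 > s > d/2$:
\begin{equation*}
	\norm{ab}_{\hsp} \lesssim \norm{a}_{\hsp}\norm{b}_{L^\infty} + \norm{a}_{L^\infty}\norm{b}_{\hsp} \lesssim \norm{a}_{\hsp}\norm{b}_{\hs} + \norm{a}_{\hs}\norm{b}_{\hsp},
\end{equation*}
where the second inequality uses the embedding $H^s \hookrightarrow L^\infty$. Applying this at each time with $a = (\mu-\mu')(t)$ and $b = v(t)$, squaring and integrating in time gives
\begin{equation*}
	\int_0^T \norm{(\mu-\mu')v}_{\hsp}^2 \dt \lesssim \norm{\mu-\mu'}_{Y_T^{s+1}}^2\norm{v}_{X_T^s}^2 + \norm{\mu-\mu'}_{X_T^s}^2\norm{v}_{Y_T^{s+1}}^2 \lesssim \norm{\mu-\mu'}_{E_T^s}^2\norm{v}_{E_T^s}^2 .
\end{equation*}
Inserting this into the previous bound yields the claimed estimate.

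Before applying Corollary \ref{cor:duaHsFinal}, I will check that $g \in Y_T^{s-1}$, which follows from the same product bound. I will also check that $\mu$ satisfies the hypotheses of that corollary, namely $\mu \in Y_T^{s+1}\cap L^\infty(\QT)$ with positive infimum; these are exactly the assumptions of the statement. The only real care point is the product estimate, which is standard (Kato–Ponce / Moser type, as in the appendix). Note that the estimate requires $\mu - \mu' \in X_T^s$, and this is implicitly assumed by the finiteness of the right-hand side.
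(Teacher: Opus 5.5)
Your proposal is correct and follows the paper's proof essentially step for step. Like the paper, you write the equation for $u-v$ with diffusion $\mu$ and source $f-f'+\Delta((\mu-\mu')v)$, apply Corollary \ref{cor:duaHsFinal}, and control the extra source by the tame product estimate in $H^{s+1}$ combined with the embedding $H^s \hookrightarrow L^\infty$.
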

\begin{proof}
	Let $z = u-v$. It satisfies
	$$ \partial_t z - \Delta(\mu z ) = f - f' + \Delta((\mu -\mu') v).$$
	By the previous corollary,
	$$\norm{z}_{E_T^s}^2 \lesssim  \exp(cT + c \norm{\mu}_{Y_T^{s+1}}^2 )\left( \norm{z^0}_{\hs}^2 + \norm{f- f' +\Delta((\mu -\mu') v) }_{Y_T^{s-1}}^2 \right).$$
	Using the tame product estimate for Sobolev spaces, for all $t \in [0,T]$,
	\begin{align*}
		\norm{\Delta((\mu(t) -\mu'(t)) v(t))}_{H^{s-1}} & \lesssim \norm{(\mu(t) -\mu'(t)) v(t)}_{\hsp},\\
		&\lesssim \norm{\mu(t) -\mu'(t)}_{L^\infty}\norm{v(t)}_{\hsp} + \norm{\mu(t) -\mu'(t)}_{\hsp}\norm{ v(t)}_{L^\infty}. 
	\end{align*}
	Using the Sobolev embedding $\norm{ \cdot }_{L^\infty} \lesssim \norm{ \cdot }_{\hs}$ and integrating in time,
	$$\norm{\Delta((\mu -\mu') v) }_{Y_T^{s-1}}^2 \lesssim \norm{\mu -\mu' }_{X_T^{s}}^2\norm{v }_{Y_T^{s+1}}^2 + \norm{\mu -\mu' }_{Y_T^{s+1}}^2\norm{v }_{X_T^{s}}^2. $$
	Injecting this in the bound from the beginning of the proof yields
	\begin{multline*}
		\norm{u-v}_{E_T^s}^2 \lesssim  \exp(cT + c \norm{\mu}_{Y_T^{s+1}}^2 )\big( \norm{u^0 - v^0}_{\hs}^2 + \norm{f- f'}_{Y_T^{s-1}}^2 \\
		+ \norm{\mu -\mu' }_{X_T^{s}}^2\norm{v }_{Y_T^{s+1}}^2 + \norm{\mu -\mu' }_{Y_T^{s+1}}^2\norm{v }_{X_T^{s}}^2\big),
	\end{multline*}
	hence the result since by definition $\norm{\cdot}_{X_T^s},\norm{\cdot}_{Y_T^{s+1}} \le \norm{\cdot}_{E_T^s}$
\end{proof}

The next estimate will be useful for proving explosion criteria:

\begin{Cor} \label{dualiteHsalt}
	Let $s > d/2$ and $\varepsilon \ge 0$. Then there exists $p  \in (2, \infty] $ and an increasing function $P: \R^2 \rightarrow \R^+$ such that for all $z^0 \in H^s(\T^d), f \in Y_T^{s-1},  \mu \in Y_T^{s+1 + \varepsilon}\cap L^\infty(\QT)$ such that $\inf_\QT \mu >0$ and $z \in E_T^s$ satisfying \eqref{eq:Kolmogorov}, the following estimate holds:
	\begin{equation*}
		\norm{z}_{E_T^s}^2 \lesssim  \norm{z^0}_{\hs}^2+ \int_0^T \mean{\mu(t)} \norm{z(t)}_{\hs}^2\, \dd t + \norm{f}_{Y_T^{s-1}}^2 
		+ \int_0^T P(\norm{ \mu(t)}_{L^p}, \norm{ z(t)}_{L^p})(1+\norm{\mu(t)}_{H^{s+1 +\varepsilon}}^2) \, \dd t.
	\end{equation*}
	Moreover, if $\varepsilon >0$, $p$ can be chosen finite.
\end{Cor}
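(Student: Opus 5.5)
We start from Proposition \ref{prop:dualiteHs}, which already contains every term of the claimed estimate except the commutator. It therefore suffices to prove a pointwise-in-time bound of the form
$$\norm{[\mu(t), J^{s+1}] z(t)}_{L^2}^2 \lesssim P(\norm{\mu(t)}_{L^p}, \norm{z(t)}_{L^p})\,(1+\norm{\mu(t)}_{H^{s+1+\varepsilon}}^2) + \eta \norm{z(t)}_{H^{s+1}}^2,$$
with $\eta$ small enough that the last term is absorbed into the $Y_T^{s+1}$ part of the left-hand side. In the statement the $z$-dependence enters only through $L^p$ norms. The high norms of $z$ must therefore either be absorbed on the left (the $H^{s+1}$ part) or be interpolated away.

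\textbf{Step 1: Kato--Ponce type decomposition.} For the commutator we use a refined form of the estimate invoked in Corollary \ref{cor:duaHsFinal}:
$$\norm{[\mu, J^{s+1}] z}_{L^2} \lesssim \norm{\nabla \mu}_{L^{q_1}} \norm{J^{s} z}_{L^{q_2}} + \norm{J^{s+1}\mu}_{L^{q_3}} \norm{z}_{L^{q_4}},$$
with $1/q_1+1/q_2 = 1/q_3+1/q_4 = 1/2$. This is the Kenig--Ponce--Vega commutator estimate on $\R^d$, transferred to $\T^d$ by the principle of Appendix \ref{sec:transfer}.

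\textbf{Step 2: the second term.} Choose $q_4=p$, so that $1/q_3 = 1/2-1/p$. By Sobolev embedding, $\norm{J^{s+1}\mu}_{L^{q_3}} \lesssim \norm{\mu}_{H^{s+1+d/p}}$. We then interpolate $H^{s+1+d/p}$ between $L^p$ (or $L^2$, itself controlled by $L^p$ on the torus) and $H^{s+1+\varepsilon}$. When $\varepsilon>0$ we take $p$ finite with $d/p<\varepsilon$. This gives $\norm{\mu}_{H^{s+1+d/p}} \le \norm{\mu}_{L^p}^{1-\theta}\norm{\mu}_{H^{s+1+\varepsilon}}^{\theta}$, which is at most $C(\norm{\mu}_{L^p})(1+\norm{\mu}_{H^{s+1+\varepsilon}})$. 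When $\varepsilon=0$ we take $p=\infty$ and $q_3=2$ directly.

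\textbf{Step 3: the first term.} By Gagliardo--Nirenberg, bound $\norm{J^s z}_{L^{q_2}} \lesssim \norm{z}_{L^p}^{1-a}\norm{z}_{H^{s+1}}^{a}$ and $\norm{\nabla\mu}_{L^{q_1}} \lesssim \norm{\mu}_{L^p}^{1-b}\norm{\mu}_{H^{s+1+\varepsilon}}^{b}$, with some $a<1$ available because $J^s z$ carries one derivative less than $H^{s+1}$. Young's inequality then gives
$$\eta\norm{z}_{H^{s+1}}^2 + C_\eta\, \norm{z}_{L^p}^{2}\,\big(\norm{\mu}_{L^p}^{1-b}\norm{\mu}_{H^{s+1+\varepsilon}}^{b}\big)^{2/(1-a)}.$$

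\textbf{Main obstacle.} The hard part is tuning the exponents in Step 3. We need $2b/(1-a)\le 2$, so that the power of $\norm{\mu}_{H^{s+1+\varepsilon}}$ is at most quadratic and the bound stays of the form $P\cdot(1+\norm{\mu}^2_{H^{s+1+\varepsilon}})$. The scaling constraints of Gagliardo--Nirenberg tie $a$, $b$, $q_1$, $q_2$ and $p$ together.

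First try: take $p$ large and $q_1$ close to $\infty$, so that $\nabla\mu$ costs $b\approx (1+d/q_1)/(s+1+\varepsilon)$ and $J^s z$ costs $a\approx (s+d/2-d/q_2)/(s+1)$. Then check that $b\le 1-a$; this is where $s>d/2$ and the extra $\varepsilon$ are used. If this fails, we would instead use the inequality $\norm{\nabla\mu}_{L^\infty}\lesssim \norm{\mu}_{H^{s+1}}$ (valid since $s>d/2$) with $q_2=2$. That case gives $\norm{\mu}_{H^{s+1}}^2\norm{z}_{H^s}^2$, which can be handled by interpolating $\norm{z}_{H^s}$ between $L^p$ and $H^{s+1}$ and absorbing.

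Finally, collecting the bounds on $[\mu,J^{s+1}]z$, integrating in time, inserting into \eqref{eq:dualityHsOrigin}, and absorbing $\eta\norm{z}_{Y_T^{s+1}}^2$ on the left yields the claimed inequality, with $P$ increasing and built from the constants above.
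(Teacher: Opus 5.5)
Your architecture is the paper's. You start from \eqref{eq:dualityHsOrigin} and bound the commutator by the Kato--Ponce/Li estimate (your Step 1 is Proposition \ref{prop:liComut} at order $s+1$). You treat $\norm{J^{s+1}\mu}_{L^{q_3}}\norm{z}_{L^p}$ by Sobolev embedding; Step 2 is fine, and since $d/p<\varepsilon$ you do not even need to interpolate. The rest is absorbed via Gagliardo--Nirenberg and Young. However, the only nontrivial point is the exponent bookkeeping in Step 3, which you leave as ``check that $b\le 1-a$''. Neither of your candidate choices passes that check in the regime where the corollary is actually used: small $\varepsilon$, in particular $\varepsilon=0$ for Proposition \ref{redLinfty}.

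In your first try, $b\approx 1/(s+1+\varepsilon)$ is not an admissible exponent when $q_1$ is close to $\infty$. Testing on a concentrated bump $\mu=\phi(\lambda\,\cdot)$ forces $b\ge \frac{1-d/q_1}{s+1+\varepsilon-d/2}$. Testing on a high-frequency periodic profile $z=g(\lambda\,\cdot)$ forces $a\ge\frac{s}{s+1}$, whatever $q_2$ is. Hence $a+b>1$ for $q_1$ large as soon as $\varepsilon<d/2$.

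Your fallback fails as well. After $\norm{\nabla\mu}_{L^\infty}\lesssim\norm{\mu}_{\hsp}$, interpolating $\norm{z}_{\hs}$ between $L^p$ and $\hsp$ and applying Young yields $\norm{\mu}_{\hsp}^{2/(1-a)}\norm{z}_{L^p}^2$ with $2/(1-a)\ge 2(s+1)>2$. This is not of the form $P(\cdot,\cdot)(1+\norm{\mu}_{H^{s+1+\varepsilon}}^2)$. Keeping $\norm{\mu}_{\hsp}^2\norm{z}_{\hs}^2$ unabsorbed instead just returns Corollary \ref{cor:duaHsFinal}, whose Gr\"onwall factor $\exp(c\norm{\mu}_{Y_T^{s+1}}^2)$ is exactly what this corollary must avoid.

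The missing idea is the content of the paper's Lemma \ref{lem:calculGN}, which feeds into Lemma \ref{lem:comutLp}. Interpolate \emph{both} factors against $L^p$ with no Sobolev gain, each taking a share of its top norm proportional to its share of the $s+1$ derivatives. Concretely, take $a=\frac{s}{s+1}$, $b=\frac{1}{s+1+\varepsilon}$, $\frac1{q_2}=\frac{1-a}{p}+\frac a2$ and $\frac1{q_1}=\frac{1-b}{p}+\frac b2$. This gives $\norm{J^sz}_{L^{q_2}}\lesssim\norm{z}_{L^p}^{1-a}\norm{z}_{\hsp}^{a}$ and $\norm{\nabla\mu}_{L^{q_1}}\lesssim\norm{\mu}_{L^p}^{1-b}\norm{\mu}_{H^{s+1+\varepsilon}}^{b}$. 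Everything then closes:
\begin{itemize}
\item $a+b\le 1$, with equality iff $\varepsilon=0$.
\item The H\"older constraint $\frac1{q_1}+\frac1{q_2}\le\frac12$ reads $\frac{2-a-b}{p}\le\frac{1-a-b}{2}$. This forces $p=\infty$ when $\varepsilon=0$ and holds for every large finite $p$ when $\varepsilon>0$, which is where finiteness of $p$ comes from.
\item The Young exponent is $\frac{b}{1-a}=\frac{s+1}{s+1+\varepsilon}\le 1$, so $\norm{\mu}_{H^{s+1+\varepsilon}}$ enters at most quadratically.
\end{itemize}
With this choice your outline becomes the paper's proof.
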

\begin{proof}
	We start from \eqref{eq:dualityHsOrigin} and apply the commutator estimate in Lemma \ref{lem:comutLp} in the appendix, which states that there exists $p \ge 2$ which is finite if $\varepsilon >0$ such that for all $\delta >0$, there exists an increasing function $P_\delta: \R^2 \rightarrow \R_+$ which is polynomial if $\varepsilon >0$, satisfying
	$$ \norm{[J^{s+1}, \mu((t)]z(t)}_{L^{2}} \lesssim P_\delta(\norm{ \mu(t)}_{L^p}, \norm{ z(t)}_{L^p})(1+\norm{\mu(t)}_{H ^{s+1 +\varepsilon}}) + \delta \norm{ z(t)}_{H^{s+1}}.$$
	
	Injecting this estimate in \eqref{eq:dualityHsOrigin}, we get
	\begin{multline*}
		\norm{z}_{E_T^s}^2 \lesssim  \norm{z^0}_{\hs}^2+ \int_0^T \mean{\mu(t)} \norm{z(t)}_{\hs}^2\, \dd t + \norm{f}_{Y_T^{s-1}}^2 \\
		+ \int_0^T \left(P_\delta(\norm{ \mu(t)}_{L^p}, \norm{ z(t)}_{L^p})(1+\norm{\mu(t)}_{H^{s+1 +\varepsilon}}^2) + \delta^2 \norm{ z(t)}_{H^{s+1}}^2 \right)\, \dd t.
	\end{multline*} 
	Fixing $\delta $ to be small enough to absorb the of $\norm{ z(t)}_{H^{s+1}}^2$ in the left-hand side, we conclude.
	
\end{proof}

\subsection{Regular solutions to the Kolmogorov equation}

We now prove that in a regular enough setting for $\mu$, the weak solutions produced by Lemma \ref{lem:duality-1} are regular enough to apply the estimates we just discussed.
\begin{Prop} \label{solveKolmo}
	Let $s> d/2$, $f \in Y_T^{s-1}$, $\mu \in E_T^s$ and $z^0 \in H^s(\T^d)$. Then the unique solution $z \in L^2(\QT)$ of \eqref{eq:Kolmogorov} is in $E_T^s $
\end{Prop}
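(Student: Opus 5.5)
The plan is a regularization and compactness argument: build smooth approximate solutions, bound them uniformly in $E_T^s$ with Corollary \ref{cor:duaHsFinal}, pass to the limit, and identify the limit with the unique $L^2(\QT)$ solution of Lemma \ref{lem:duality-1}.

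\textbf{Regularized problems.} Mollify the data in space and time to obtain $\mu_\varepsilon$, $f_\varepsilon$ and $z^0_\varepsilon$ smooth. The mollified coefficient should satisfy $\mu_\varepsilon \ge \inf_\QT \mu$, which holds for positive convolution kernels, and
$$\norm{\mu_\varepsilon}_{Y_T^{s+1}} \lesssim \norm{\mu}_{Y_T^{s+1}}, \qquad \norm{f_\varepsilon}_{Y_T^{s-1}}\lesssim \norm{f}_{Y_T^{s-1}}, \qquad \norm{z_\varepsilon^0}_{\hs}\le \norm{z^0}_{\hs}.$$
For instance, a Fourier truncation in space combined with time mollification, after extending the data in time, preserves positivity if the spatial mollifier is a positive kernel such as the heat kernel. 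With smooth, uniformly positive coefficients, the equation $\partial_t z - \mu_\varepsilon \Delta z - 2\nabla \mu_\varepsilon\cdot\nabla z - (\Delta\mu_\varepsilon) z = f_\varepsilon$ is a linear uniformly parabolic equation with smooth coefficients on the torus. Classical theory, such as Galerkin approximation together with parabolic regularity, therefore gives a smooth solution $z_\varepsilon$. In particular $z_\varepsilon\in E_T^{s}$ and $[\mu_\varepsilon,J^{s+1}]z_\varepsilon\in L^2(\QT)$, so all hypotheses of Proposition \ref{prop:dualiteHs} and Corollary \ref{cor:duaHsFinal} hold.

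\textbf{Uniform bound and limit.} Corollary \ref{cor:duaHsFinal} gives
$$\norm{z_\varepsilon}_{E_T^s}^2 \lesssim \exp\bigl(cT+c\norm{\mu}_{Y_T^{s+1}}^2\bigr)\bigl(\norm{z^0}_{\hs}^2+\norm{f}_{Y_T^{s-1}}^2\bigr),$$
which is uniform in $\varepsilon$. This holds because the constants depend only on $s$, $d$, $\inf\mu$ and an upper bound for $\norm{\mu_\varepsilon}_{Y^{s+1}_T}$. We can then extract a subsequence converging weakly-$*$ in $L^\infty_T H^s$ and weakly in $Y_T^{s+1}$ to some $z$. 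Since $\mu_\varepsilon\to\mu$ in $L^2(\QT)$ and $z_\varepsilon$ is bounded in $L^\infty(\QT)$ by the embedding $H^s\hookrightarrow L^\infty$, the product $\mu_\varepsilon z_\varepsilon$ converges weakly to $\mu z$. Passing to the limit in the weak formulation shows that $z$ is a weak solution in $L^2(\QT)$. By the uniqueness in Lemma \ref{lem:duality-1}, it coincides with the given solution, which therefore lies in $L^\infty_TH^s\cap Y_T^{s+1}$.

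\textbf{Time continuity (the main obstacle).} The remaining step is to show $z\in\C([0,T];H^s)$ rather than merely $L^\infty_TH^s$. I would use the Lions–Magenes argument. Since $z\in L^2_TH^{s+1}$ and
$$\partial_t z=\Delta(\mu z)+f\in L^2_TH^{s-1},$$
where $\mu z\in L^2_TH^{s+1}$ by the tame product estimate (using $\mu\in L^\infty_TH^s\cap L^2_TH^{s+1}$ and $z$ likewise), the function $J^sz$ satisfies $J^s z \in L^2_TH^1$ with $\partial_tJ^sz\in L^2_TH^{-1}$. The Lions–Magenes lemma then gives $J^sz\in \C([0,T];L^2)$, that is, $z\in X_T^s$, and the initial trace is $z^0$. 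Here the assumption $\mu\in E_T^s$, and not just $\mu\in Y_T^{s+1}$, is used to place $\mu z$ in $L^2_TH^{s+1}$. As a fallback, I would prove that $(z_\varepsilon)$ is Cauchy in $E_T^s$ by applying Corollary \ref{cor:dualStab} to two regularizations. This yields a strong limit in $E_T^s$ directly, since $\norm{\mu_\varepsilon-\mu_{\varepsilon'}}_{E_T^s}\to0$ when $\mu\in E_T^s$.
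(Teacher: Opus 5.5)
Your proof is correct, but its main line differs from the paper's. The paper's proof is essentially your fallback. It mollifies $(\mu,f,z^0)$ so that the data converge in $E_T^s\times Y_T^{s-1}\times H^s$, and uses Corollary \ref{cor:duaHsFinal} only to bound $(z_N)$ uniformly in $E_T^s$. It then applies Corollary \ref{cor:dualStab} to two approximations to show that $(z_N)$ is Cauchy in $E_T^s$, and identifies the strong limit with $z$ by uniqueness in $L^2(\QT)$.

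\emph{What the paper's route buys.} Continuity in time comes for free: $X_T^s$ is complete and each smooth $z_N$ lies in $\C([0,T];H^s)$, so no Lions--Magenes step is needed. The cost is that Corollary \ref{cor:dualStab} involves $\norm{\mu_N-\mu_M}_{X_T^s}$. Hence the $\C([0,T];H^s)$ regularity of $\mu$, and its approximation in that norm, are genuinely used.

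\emph{What your route buys.} It needs only the a priori bound, weak compactness, the uniqueness from Lemma \ref{lem:duality-1}, and the Lions--Magenes lemma. The last is fed by $\partial_t z=\Delta(\mu z)+f\in L^2_TH^{s-1}$, obtained from the tame product estimate. That step only requires $\mu\in L^\infty(\QT)\cap Y_T^{s+1}$, not the full $X_T^s$ norm of $\mu$. Your remark that $E_T^s$ (rather than $Y_T^{s+1}$) is what is used there overstates the requirement: only the $L^\infty(\QT)$ bound matters. So your argument proves the proposition under the weaker hypotheses of Corollary \ref{cor:duaHsFinal}, at the price of an extraction and identification step and an external lemma.

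\emph{On the mollification.} You explicitly require a nonnegative kernel so that $\inf\mu_\varepsilon\ge\inf\mu$, which the constants of both corollaries depend on; the paper leaves this implicit. Note that a sharp Fourier truncation would not preserve this lower bound. The spatial smoothing should therefore be the heat kernel, or another positive kernel, and not a truncation.
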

\begin{proof}
	For smooth data, the existence is a consequence of classical parabolic theory. We will deduce the wanted result by approximating the solution and data by smooth functions. By convolution with a space-time mollification kernel, there exist sequences $(\mu_N)$, $(f_N)$ and $(z^0_N)$ of smooth functions that converge respectively in $ E_T^{s},  Y_T^{s-1}$ and $H^{s}(\T^d)$ to $\mu, f $ and $z^0$. Let $(z_N)$ be the associated sequence of smooth solutions. We will prove that $(z_N)$ is a Cauchy sequence in $E_T^s$. By uniqueness of solutions of \eqref{eq:Kolmogorov} in $L^2(\QT)$, $z$ will then be the only possible limit of $(z_N)$.

	Since $\norm{z^0_N}_{\hs} \lesssim \norm{z^0}_{\hs}$, $\norm{f_N}_{Y_T^{s-1}} \lesssim \norm{f}_{Y_T^{s-1}} $ and $\norm{\mu_N}_{E_T^s} \lesssim \norm{\mu}_{E_T^s}$, Corollary \ref{cor:duaHsFinal} proves that $(z_N)$ is a bounded sequence in $E_T^s$:
	$$ \norm{z_N}_{E_T^s}^2 \lesssim  \exp(cT + c \norm{\mu_N}_{Y_T^{s+1}}^2 )\left( \norm{z^0_N}_{\hs}^2 + \norm{f_N}_{Y_T^{s-1}}^2 \right)\lesssim_{T,\norm{z^0}_{\hs}, \norm{f}_{Y_T^{s-1}},\norm{\mu}_{E_T^s}} \, 1. $$
	By Corollary \ref{cor:dualStab}, for all $M,N \in \mathbb N$,
	\begin{align*}
		\norm{z_N-z_M}_{E_T^s}^2 &\lesssim  \exp(cT + c \norm{\mu_N}_{Y_T^{s+1}}^2 )\left( \norm{z^0_N - z^0_M}_{\hs}^2 + \norm{f_N- f_M}_{Y_T^{s-1}}^2 + \norm{\mu_N -\mu_M }_{E_T^{s}}^2\norm{z_M }_{E_T^{s}}^2 \right) ,\\
		&\lesssim_{T,\norm{z^0}_{\hs}, \norm{f}_{Y_T^{s-1}},\norm{\mu}_{E_T^s}}  \, \norm{z^0_N - z^0_M}_{\hs}^2 + \norm{f_N- f_M}_{Y_T^{s-1}}^2 + \norm{\mu_N -\mu_M }_{E_T^{s}}^2,
	\end{align*}
	so $\norm{z_N-z_M}_{E_T^s}^2$ converges to $0$ as $M,N  \to \infty$. $(z_N)$ is therefore a Cauchy sequence; this concludes the proof.
\end{proof}

\section{ Proof of Theorem \ref{th:localExist}} \label{sec:3}

In this section, we work under assumptions \ref{ass:1}–\ref{ass:3}. In the conservative case (\textit{i.e.}, $r_k=0$ for all $k$), \eqref{eq:SKTtr} can be considered as a sequence of linear parabolic equations since once $u_{k-1}$ has been determined, the equation on $u_k$ can be seen as a linear equation parameterized by $u_{k-1}$. Therefore, by induction, classical parabolic theory yields global existence for regular enough initial data. Thus, the difficulty in proving Theorem \ref{th:localExist} comes from having reaction in addition to the cross-diffusion. In a first time, we abstract away the reaction terms and get estimates for the system with a general source term in Subsection \ref{sec:triSource}. These estimates are used in the fixed point procedure in Subsection \ref{sec:fixedPoint}. The fixed point argument already gives an explosion criterion in $H^s(\T^d)$ norm but Subsection \ref{sec:explo} refines it to an $L^\infty(\T^d)$ and under the additional assumption \ref{ass:5} to an $L^p(\QT)$ explosion criterion (Proposition \ref{prop:exploLp}) for some $p< \infty$. Subsection \ref{sec:end} then concludes the proof by discussing smoothness and sign-preservation.

\subsection{ Stability estimate} \label{sec:triSource}

In this subsection we are interested in the triangular SKT system with source terms:
\begin{equation} \label{eq:SKTtriAffine}
	\left\{\begin{array}{ll}
		\partial_t u_k  - \Delta[\mu_k(u_1, \dots u_{k-1}) u_k]= f_k \quad \forall k \in\{1 ,\dots, n \} \\  
		u_k(t=0, \cdot) = u_k^0
	\end{array} \right. 
\end{equation}

\begin{Prop} \label{prop:triBound}
	Assume that
	$F = (f_1, \dots, f_n) \in Y_T^{s-1} $ and 
	$U^0 = (u_1^0, \dots , u_n^0) \in H^s(\T^d)$. Then, there exists a unique solution $U= (u_1, \dots, u_n)$ of \eqref{eq:SKTtriAffine} in $L^2(\QT)$. moreover $U \in E_T^s$ and 
	$$
	\norm{u_k}_{E_T^s}^2 
	\lesssim  \exp \left(cT + c\int_0^T \norm{\mu_k(u_1, \dots, u_{k-1})}_{\hsp}^2(t) \, \dd t \right)\left( \norm{u^0_k}_{\hs}^2 +  \norm{f_k}_{Y_T^{s-1}}^2 \right),
	$$ 
	where the constant behind $\lesssim$ only depends on the parameters $(\mu_i)$.
\end{Prop}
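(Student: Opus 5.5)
We argue by induction on $k\in\{1,\dots,n\}$, using the triangular structure: once $u_1,\dots,u_{k-1}$ are known and lie in $E_T^s$, the $k$-th equation of \eqref{eq:SKTtriAffine} is a scalar Kolmogorov equation \eqref{eq:Kolmogorov}. Its diffusion coefficient $\mu := \mu_k(u_1,\dots,u_{k-1})$ is given, its source is $f_k$ and its initial datum is $u_k^0$. For $k=1$, $\mu_1$ is a positive constant and the claim is immediate from Lemma \ref{lem:duality-1}, Proposition \ref{solveKolmo} and Corollary \ref{cor:duaHsFinal}.

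For the induction step, suppose $u_1,\dots,u_{k-1}\in E_T^s$ have been constructed and are unique in $L^2(\QT)$. We first check that $\mu=\mu_k(u_1,\dots,u_{k-1})$ satisfies the hypotheses of the earlier results.

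\begin{itemize}
\item \emph{Lower bound.} By \ref{ass:3}, $\inf_{\QT}\mu\ge \inf\mu_k>0$, and this bound is independent of the $u_i$.
\item \emph{Boundedness.} Since $s>d/2$, the embedding $H^s\hookrightarrow L^\infty$ gives $u_i\in L^\infty(\QT)$. As $\mu_k$ is smooth by \ref{ass:2}, hence bounded on bounded sets, $\mu\in L^\infty(\QT)$.
\item \emph{Sobolev regularity.} We need $\mu\in E_T^s$. This follows from classical composition estimates in $H^s$, $s>d/2$, for a smooth function applied to bounded $H^s$ maps. Applying them pointwise in time gives $\|\mu(t)\|_{H^s}\lesssim_{\|U(t)\|_{L^\infty}} 1+\|U(t)\|_{H^s}$, and similarly $\|\mu(t)\|_{H^{s+1}}\lesssim 1+\|U(t)\|_{H^{s+1}}$ with constants depending on $\sup_t\|U(t)\|_{H^s}$. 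Hence $\mu\in X_T^s\cap Y_T^{s+1}$.
\item \emph{Time continuity.} Continuity of $t\mapsto\mu(t)$ in $H^s$ follows from continuity of composition in $H^s$. Alternatively, it is not needed, since Corollary \ref{cor:duaHsFinal} only uses $\mu\in Y_T^{s+1}\cap L^\infty$.
\end{itemize}

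Next we obtain existence, uniqueness and regularity. Because $f_k\in Y_T^{s-1}\subset L^2_TH^{-2}$ (as $s-1\ge -2$) and $u_k^0\in H^s\subset H^{-1}$, Lemma \ref{lem:duality-1} gives a unique $u_k\in L^2(\QT)$ solving the $k$-th equation. Proposition \ref{solveKolmo} then places $u_k$ in $E_T^s$. Uniqueness of the whole vector $U$ follows inductively, since $\mu$ is determined by the already unique components $u_1,\dots,u_{k-1}$.

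Finally, Corollary \ref{cor:duaHsFinal} applied to $z=u_k$ yields the stated estimate, with $\|\mu\|_{Y_T^{s+1}}^2=\int_0^T\|\mu_k(u_1,\dots,u_{k-1})(t)\|_{H^{s+1}}^2\,\dd t$. Its constants depend only on $s$, $d$ and $\inf\mu\ge\inf\mu_k$, hence only on the $(\mu_i)$.

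The main obstacle is verifying that $\mu_k(u_1,\dots,u_{k-1})\in Y_T^{s+1}$, since Proposition \ref{solveKolmo} formally requires $\mu\in E_T^s$. This is a standard Moser-type composition estimate, and I would cite it from the appendix or the literature rather than reprove it. A second, milder point is that $\mu_k(0)\ne0$ in general. The composition estimate is therefore applied to $\mu_k-\mu_k(0)$, the constant $\mu_k(0)$ being trivially in $H^{s+1}(\T^d)$, so no difficulty arises on the torus.
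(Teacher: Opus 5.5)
Your proposal is correct and follows the paper's own proof: induction on $k$, with Proposition \ref{prop:bahouri} guaranteeing $\mu_k(u_1,\dots,u_{k-1})\in E_T^s$, so that Lemma \ref{lem:duality-1} and Proposition \ref{solveKolmo} give existence, uniqueness and $E_T^s$ regularity of $u_k$, and Corollary \ref{cor:duaHsFinal} gives the bound. One small caveat: your aside that time continuity of $\mu$ is not needed is misleading, since Proposition \ref{solveKolmo} assumes $\mu\in E_T^s$ and its approximation argument uses convergence of the mollified coefficients in $E_T^s$; you verify this continuity anyway, so the argument is unaffected.
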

\begin{proof}
	The proof is by induction on $k$. The equation on $u_1$ is the regular heat equation and if existence, uniqueness and regularity are known for $u_1, \dots, u_{k-1}$, it follows from Proposition \ref{solveKolmo} and Corollary \ref{cor:duaHsFinal} that these properties are also satisfied by $u_k$, with the above estimate. It is indeed possible to apply these results since \ref{prop:bahouri} tells that if $u_1, \dots, u_{k-1} \in E_T^{s}$, then $\mu_k(U) \in E_T^s \supset Y_T^{s+1} \cap L^\infty(\QT)$.
\end{proof}

\begin{Cor} \label{cor:triBound}
	For all $F \in Y_T^{s-1} $ and
	$U^0 \in H^s(T^d)$, the unique solution $U \in L^2(\QT)$ of \eqref{eq:SKTtriAffine} satisfies
	$$ \norm{U}_{E_T^s} \le  C(T, \norm{U^0}_{\hs},\norm{F}_{Y_T^{s-1}}), $$
	where $C$ is an increasing function of its arguments, depending only on $s$ and the parameters $\mu_i$ of the problem.
\end{Cor}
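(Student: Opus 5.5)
The plan is an induction on $k$, using Proposition \ref{prop:triBound} at each step. For $k=1$, $\mu_1$ is a constant, so $\mu_1(U)$ has zero gradient and $\norm{\mu_1}_{\hsp}^2$ is just a constant. The estimate of Proposition \ref{prop:triBound} then gives
$$\norm{u_1}_{E_T^s}^2 \lesssim e^{cT}(1+T)\left(\norm{u_1^0}_{\hs}^2+\norm{f_1}_{Y_T^{s-1}}^2\right),$$
and the right-hand side is an increasing function of $T$, $\norm{U^0}_{\hs}$ and $\norm{F}_{Y_T^{s-1}}$.

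For the inductive step, assume that for each $i<k$ we have $\norm{u_i}_{E_T^s}\le C_i(T,\norm{U^0}_{\hs},\norm{F}_{Y_T^{s-1}})$ with $C_i$ increasing. By Proposition \ref{prop:triBound} it then suffices to control $\int_0^T \norm{\mu_k(u_1,\dots,u_{k-1})(t)}_{\hsp}^2\dt$ by an increasing function of $\norm{(u_1,\dots,u_{k-1})}_{E_T^s}$ (and $T$).

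To do this, I would use the tame composition estimate of Proposition \ref{prop:bahouri}, already invoked in the proof of Proposition \ref{prop:triBound}. Write $\mu_k(V)=\mu_k(0)+(\mu_k(V)-\mu_k(0))$ with $V=(u_1,\dots,u_{k-1})$. The constant part contributes $\mu_k(0)^2 T$. For the other part, the composition estimate in $H^{s+1}$ reads
$$\norm{\mu_k(V)-\mu_k(0)}_{\hsp}\le C\big(\norm{V}_{L^\infty}\big)\norm{V}_{\hsp}.$$
The Sobolev embedding ($s>d/2$) gives $\norm{V(t)}_{L^\infty}\lesssim \norm{V}_{X_T^s}$. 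Squaring and integrating in time yields
$$\int_0^T\norm{\mu_k(V)}_{\hsp}^2\dt\lesssim T+C\big(\norm{V}_{X^s_T}\big)^2\norm{V}_{Y^{s+1}_T}^2,$$
which is an increasing function of $T$ and $\norm{V}_{E_T^s}$. Plugging this into the exponential of Proposition \ref{prop:triBound}, together with the induction hypothesis, gives an increasing bound $C_k$. Taking $C=\big(\sum_k C_k^2\big)^{1/2}$ concludes, since the constants depend only on $s$, $d$ and the $\mu_i$.

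The main point to check is that the composition estimate holds at the regularity $s+1$ with a constant depending only on the $L^\infty$ norm, and that this constant can be taken increasing. This is standard for smooth $\mu_k$ (Moser-type estimate), which I assume Proposition \ref{prop:bahouri} provides. If it does not, I would redo the estimate: apply the chain rule to $\nabla[\mu_k(V)]$, then use the tame product estimate in $H^s$ together with an $H^s$ composition estimate for $\nabla\mu_k$. This route yields the same form of bound.
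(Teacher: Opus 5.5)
Your proposal is correct and follows the paper's proof essentially step for step. Both argue by induction on $k$ through Proposition \ref{prop:triBound}, split off the constant $\mu_k(0)$, and apply Proposition \ref{prop:bahouri} at regularity $s+1$ (legitimate since $s+1>d/2$), controlling the $L^\infty$-dependent constant by $H^s\hookrightarrow L^\infty$ and the induction hypothesis. One slip is cosmetic: for $k=1$ the prefactor should be $\exp\big(c(1+\mu_1^2)T\big)$, not $e^{cT}(1+T)$; this changes nothing in the argument.
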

\begin{proof}
	The proof is by induction. Let $k \in \{1 , \dots, n\}$. Looking at the estimate in Proposition \ref{prop:triBound}, to prove the bound on $ \norm{u_k}_{E_T^s} $, it is sufficient to bound the following integral
	\begin{align*}
		\int_0^T \norm{\mu_k(u_1, \dots, u_{k-1})}_{\hsp}^2(t) \, \dd t &\lesssim T + \int_0^T \norm{\mu_k(u_1, \dots, u_{k-1}) - \mu_k(0, \dots, 0)}_{\hsp}^2(t) \, \dd t \\
		& \lesssim_{\norm{(u_1, \dots, u_{k-1})}_{L^\infty(\QT)}} T + \int_0^T \norm{(u_1, \dots, u_{k-1})}_{\hsp}^2(t) \, \dd t ,
	\end{align*}
	where we used Proposition \ref{prop:bahouri} from the appendix to bound the composition in Sobolev norm. Since by Sobolev embedding $\norm{u_i}_{L^\infty(\QT)} \lesssim \norm{u_i}_{X_T^s} \le  \norm{u_i}_{E_T^s}$,  all the quantities in the right-hand side of the above inequality can be bounded by a $C(T, \norm{U^0}_{\hs},\norm{F}_{Y_T^{s-1}})$ using the induction hypothesis. This concludes the proof. 
\end{proof}

We now turn to a stability estimate for \eqref{eq:SKTtriAffine}. 
\begin{Prop} \label{prop:triangStab}
	Let $U=(u_1, \dots, u_n)$ and $V=(v_1,\dots,v_n)$ be solutions of \eqref{eq:SKTtriAffine} with respective data $(U^0, F)$ and $(V^0, F')$, both in $H^s(\T^d) \times Y_T^{s-1}$. Then for all $1 \le k \le n$, $u_k - v_k$ satisfies
	$$
	\norm{u_k - v_k}_{E_T^s}^2
	\lesssim_{T,\norm{U}_{E_T^s}, \norm{V}_{E_T^s}} \norm{u^0_k - v^0_k}_{\hs}^2 + \norm{f_k' - f_k}_{Y_T^{s-1}}^2.
	$$ 
\end{Prop}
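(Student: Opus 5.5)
The plan is an induction on $k$ that combines Corollary \ref{cor:dualStab} with a Lipschitz estimate for composition in Sobolev spaces. Note that for $k\ge 2$ the diffusion coefficient of $u_k - v_k$ involves $u_1 - v_1, \dots, u_{k-1}-v_{k-1}$. So what the argument naturally produces, and what I would prove, is the cumulative form
$$\norm{u_k - v_k}_{E_T^s}^2 \lesssim_{T,\norm{U}_{E_T^s}, \norm{V}_{E_T^s}} \sum_{i\le k}\left(\norm{u^0_i - v^0_i}_{\hs}^2 + \norm{f_i' - f_i}_{Y_T^{s-1}}^2\right).$$
This agrees with the displayed estimate for $k=1$, and it is the form needed later for the fixed point and for the stability statement of Theorem \ref{th:localExist}.

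\emph{Step 1 (the $k$-th equation).} Fix $k$ and apply Corollary \ref{cor:dualStab} to $u_k$ and $v_k$, with data $(u_k^0, f_k, \mu)$ and $(v_k^0, f_k', \mu')$, where $\mu=\mu_k(U)$ and $\mu'=\mu_k(V)$. Both coefficients are bounded below by \ref{ass:3}. They lie in $E_T^s\subset Y_T^{s+1}\cap L^\infty(\QT)$ by Proposition \ref{prop:bahouri} and the embedding $H^s\hookrightarrow L^\infty$ (here $s>d/2$). This gives
$$\norm{u_k-v_k}_{E_T^s}^2 \lesssim e^{cT + c\norm{\mu_k(U)}_{Y_T^{s+1}}^2}\Big(\norm{u_k^0-v_k^0}_{\hs}^2 + \norm{f_k-f_k'}_{Y_T^{s-1}}^2 + \norm{\mu_k(U)-\mu_k(V)}_{E_T^s}^2 \norm{v_k}_{E_T^s}^2\Big).$$
The exponential prefactor is controlled by $T$ and $\norm{U}_{E_T^s}$, exactly as in the proof of Corollary \ref{cor:triBound}: bound $\norm{\mu_k(U)}_{\hsp}$ by $1+\norm{U}_{\hsp}$ using Proposition \ref{prop:bahouri}, with constants depending on $\norm{U}_{L^\infty(\QT)}\lesssim \norm{U}_{X_T^s}$. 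Likewise $\norm{v_k}_{E_T^s}\le \norm{V}_{E_T^s}$.

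\emph{Step 2 (the composition difference, the main obstacle).} The remaining task is to show
$$\norm{\mu_k(U)-\mu_k(V)}_{E_T^s}\lesssim_{\norm{U}_{E_T^s},\norm{V}_{E_T^s}} \sum_{i<k}\norm{u_i-v_i}_{E_T^s}.$$
I would write
$$\mu_k(U)-\mu_k(V) = \sum_{i<k} (u_i - v_i)\, G_i, \qquad G_i=\int_0^1 \partial_i\mu_k(V+\theta(U-V))\,\dd\theta.$$
Then apply the tame product estimate used in the proof of Corollary \ref{cor:dualStab} at each time $t$, both in $H^s$ and in $H^{s+1}$. For this I need $G_i(t)$ bounded in $L^\infty$ and bounds of the form
$$\norm{G_i(t)}_{H^{s}}\lesssim 1+\norm{(U,V)(t)}_{H^{s}},\qquad \norm{G_i(t)}_{H^{s+1}}\lesssim 1+\norm{(U,V)(t)}_{H^{s+1}},$$
with constants depending on $L^\infty$ bounds of $U$ and $V$. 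These follow from Proposition \ref{prop:bahouri} applied to $\partial_i\mu_k - \partial_i\mu_k(0)$ under the integral in $\theta$; the subtraction handles the constant part, which is not in $H^s$. After taking $\sup_t$ for the $X_T^s$ part and integrating in time for the $Y_T^{s+1}$ part, the products pair an $L^\infty_t$ factor with an $L^2_t$ factor, as in Corollary \ref{cor:dualStab}. The delicate point is to keep $u_i-v_i$ at the top regularity $H^{s+1}$ only multiplied by $L^\infty$ norms of $G_i$, and $G_i$ at regularity $H^{s+1}$ only against $\norm{u_i-v_i}_{L^\infty}\lesssim\norm{u_i-v_i}_{X_T^s}$.

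\emph{Step 3 (induction).} Insert Step 2 into Step 1 to get
$$\norm{u_k-v_k}_{E_T^s}^2\lesssim \norm{u_k^0-v_k^0}_{\hs}^2+\norm{f_k-f'_k}_{Y_T^{s-1}}^2+\sum_{i<k}\norm{u_i-v_i}_{E_T^s}^2 .$$
For $k=1$ the coefficient $\mu_1$ is constant, so the last term vanishes and the statement holds as written. For $k\ge2$, the induction hypothesis bounds each $\norm{u_i-v_i}_{E_T^s}^2$ by the data differences of index $\le i$, which closes the induction. All constants depend increasingly on $T$, $\norm{U}_{E_T^s}$ and $\norm{V}_{E_T^s}$.
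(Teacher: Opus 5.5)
Your proposal is correct and is essentially the paper's proof. Both apply Corollary \ref{cor:dualStab} to the $k$-th equation, bound $\mu_k(U)-\mu_k(V)$ in $E_T^s$, and induct on $k$; the paper cites Proposition \ref{prop:bahouri} for the middle step, while you rederive it through the $G_i$ and the tame product estimate, correctly letting the constant depend on $\norm{U}_{E_T^s}$ and $\norm{V}_{E_T^s}$ rather than only on $L^\infty$ norms. You are also right that this argument, like the paper's own (whose last step invokes the induction hypothesis on $(z_1,\dots,z_{k-1})$), actually gives the cumulative bound with data differences summed over $i\le k$, which is the form used later in the fixed point and in the stability part of Theorem \ref{th:localExist}.
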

\begin{proof}
	Let $z_k = u_k -v_k$. We are in the setting of Corollary \ref{cor:dualStab}, which we apply to see
	$$\norm{z_k}_{E_T^s}^2 \lesssim  \exp(cT + c \norm{\mu_k(U)}_{Y_T^{s+1}}^2 )\left( \norm{z_k^0}_{\hs}^2 + \norm{f_k- f_k'}_{Y_T^{s-1}}^2 + \norm{\mu_k(U) -\mu_k(V) }_{E_T^{s}}^2\norm{v_k }_{E_T^{s}}^2 \right).$$
	Using Proposition \ref{prop:bahouri} from the appendix as in the previous proof, $\norm{\mu_k(U)}_{Y_T^{s+1}} \lesssim_{ \norm{U}_{X_T^{s}}}  \sqrt T + \norm{U}_{Y_T^{s+1}}$ and $\norm{\mu_k(U) -\mu_k(V) }_{E_T^{s}} \lesssim_{\norm{U}_{X_T^{s}},\norm{V}_{X_T^{s}}} \norm{(z_1, \dots, z_{k-1}) }_{E_T^{s}} $, so
	$$\norm{z_k}_{E_T^s}^2 \lesssim_{T,\norm{U}_{E_T^s}, \norm{V}_{E_T^s}}   \norm{z_k^0}_{\hs}^2 + \norm{f_k- f_k'}_{Y_T^{s-1}}^2 + \norm{(z_1, \dots, z_{k-1}) }_{E_T^{s}} .$$
	The last term can be bounded using an induction hypothesis.
\end{proof}

\subsection{Fixed point argument} \label{sec:fixedPoint} 

We are now interested in proving local existence and uniqueness for the full system \eqref{eq:SKTtr}. To do so, we use a fixed point procedure and let $\Phi:X_T^s \rightarrow E_T^s \supset X_T^s $ be the function that maps $U \in X_T^s$ to the solution $U^\star$ of
\begin{equation} \label{eq:defPhi}
	\left\{\begin{array}{ll}
		\partial_t u_k^*  - \Delta[\mu_k(u_1^*, \dots u_{k-1}^*) u_k^*]= r_k(u_1, \dots, u_n)  \quad \forall k \in\{1 ,\dots, n \} \\ 
		u_k^*(t=0, \cdot) = u_k^0
	\end{array} \right. 
\end{equation}
Let $U,V \in X_T^s$ be functions with the same initial value $U(t=0, \cdot) =V(t=0, \cdot) = U^0$. By Proposition \ref{prop:bahouri} of the appendix,
$$ \norm{R(U)}_{Y_T^{s-1}} \le \norm{R(U)}_{Y_T^{s}} \lesssim_{\norm{U}_{L^\infty(\QT)}} \sqrt T + \norm{U}_{Y_T^{s}},$$ 
so since $X_T^s \hookrightarrow Y_T^s \cap L^\infty(\QT)$, using Corollary \ref{cor:triBound} 
$$  \norm{\Phi(U)}_{E_T^{s}}^2 \le  C(T, \norm{U}_{X_T^s}),$$
where $C: \R_+^2 \rightarrow \R_+$ is increasing in both its arguments.
Applying the stability estimate from Proposition \ref{prop:triangStab} and then Proposition \ref{prop:bahouri} gives
$$
\norm{\Phi(V)  -\Phi(U)}_{E_T^{s}}^2 
\lesssim_{ \norm{\Phi(U)}_{E_T^{s}}, \norm{\Phi(V)}_{E_T^{s}}}  \norm{R(U) - R(V)}_{Y_T^{s-1}}^2 \lesssim_{T,  \norm{U}_{X_T^s},  \norm{V}_{X_T^s}}  \norm{U-V}_{Y_T^{s}}^2  .
$$
Since $\norm{\cdot}_{Y_T^s}^2 \le T\norm{\cdot}_{X_T^s}^2 $, there exists a function $C_0: \mathbb \R_+^3 \rightarrow \R_+$ increasing in all of its arguments such that 
\begin{equation*} 
	\norm{\Phi(V)  -\Phi(U)}_{E_T^{s}}^2  \le T C_0(T,  \norm{U}_{X_T^s},  \norm{V}_{X_T^s}) \norm{U-V}_{X_T^s}^2
\end{equation*} 	
In order to apply Banach's fixed point theorem, we search for $R$ and $T$ such that the closed ball $\overline {B_{X_T^s}(0,R)}$ is stable by $\Phi$ and such that $\Phi$ is contractant on it. For the  contractivity it is sufficient to have $ \sqrt T C_0(T,R,R) < 1$, which is true for all $T$ small enough. For the stability constraint, we compare $U \in \overline {B_{X_T^s}(0,R)}$ with the constant in time function $U_0$,
$$
\norm{\Phi(U)  -\Phi(U^0)}_{E_T^{s}}
\le   \sqrt T C_0(T,R,\norm{\Phi(U_0)}_{X_T^s}) \norm{U-U^0}_{X_T^s} .
$$
Therefore, adding the constraint  $R > \norm{\Phi(U^0)}_{E_T^s}$ (which implies $ R >  \norm{U^0}_{\hs}$),
$$ \norm{\Phi(U)}_{E_T^{s}}
\le R + \sqrt T C_0 (T,R,R) \times 2R .$$
If we begin by fixing $R >  \norm{U^0}_{\hs}$ and choose $T$ small enough to have both $ R > \norm{\Phi(U^0)}_{E_T^s}$ and 
$$ R + 2 R\sqrt T  C_0(T,R,R) < R ,$$
which implies automatically $ \sqrt T C_0(T,R,R) < 1$, we get a ball $\overline {B_{X_T^s}(0,R)}$ on which we can apply Banach's fixed point theorem and find a unique fixed point of $\Phi$. This proves local existence.

For uniqueness, let $U$ and $V$ be two solutions with the same initial condition $U^0$ and let $T^*$ be the shortest of the two lifetimes of $U$ and $V$. Let $T_0 = \sup \{T < T^* \mid U_{|[0,T] } = V_{|[0,T] }\}$. We prove $T_0=T^*$ by contraction. If $T_0 < T$, we prove that there exists $\varepsilon > 0$ such that $U$ and $V$ coincide until time $T_0 + \varepsilon$. By continuity of $U$ and $V$, if $T_0 < T^*$ then $U(T_0) = V(T_0)$. Therefore, the proof reduces to the case $T_0 = 0$. 
In this case, we set $T < T^*$,  $R > \max(\norm{U}_{X_T^{s}}, \norm{V}_{X_T^{s}})$ and $\varepsilon$ small enough so that $\Phi$ is contractant on $\overline {B_{X_\varepsilon^s}(0,R)}$. Since $U$ and $V$ are both fixed points of $\Phi$ in $\overline {B_{X_\varepsilon^s}(0,R)}$, they coincide until time $\varepsilon$. We proved that the coincidence duration of $U$ and $V$ can be improved by $\varepsilon$, hence the contradiction. Therefore, $U$ and $V$ coincide as long as both solutions live, proving uniqueness of solutions of \eqref{eq:SKTtr}.

Looking back to the local existence proof, we found for all initial condition $U^0$ a time $T_{\norm{U^0}_{X_T^s}}$ of existence which can be expressed as a decreasing function of $\norm{U^0}_{X_T^s}$. If the solution stays bounded in $X_T^s$ by some $R>0$, we can extend the existence interval and get existence until $t+T_R$ for all $t$ such that $\norm{U(t)}_{X_T^s} < R$. This yields the following explosion criterion.
\begin{Prop} \label{prop:localExist}
	For all $ U^0 \in H^s (\T^d)$, there exists $T>0$ such that there exists a unique solution $U \in E_T^s $ of \eqref{eq:SKTtr}.   
	If $T^\star$ is the maximal lifetime of the solution, then either $T^* = + \infty $ or
	$$ \norm{U(t)}_{\hs}  \underset{t \to T^*}{\longrightarrow} \infty .$$
\end{Prop}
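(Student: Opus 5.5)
Local existence and uniqueness on some interval $[0,T]$ are exactly what the fixed point construction above provides. The map $\Phi$ from \eqref{eq:defPhi} stabilizes a closed ball $\overline{B_{X_T^s}(0,R)}$ and is contracting there, once $R>\norm{U^0}_{\hs}$ is fixed and $T$ is small. The fixed point lies in $E_T^s$ because $\Phi$ maps into $E_T^s$ by Corollary \ref{cor:triBound}. Uniqueness then follows from the continuation argument already described. So the remaining task is to turn that construction into the stated dichotomy for the maximal lifetime $T^*$.

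First, I would make the dependence of the existence time explicit. Track the constraints imposed on $(R,T)$ in the fixed point argument. Since $C_0$ and the constant of Corollary \ref{cor:triBound} are increasing functions, one can choose $R = 2\norm{U^0}_{\hs}+1$ and a time $T_\rho>0$ depending only on an upper bound $\rho\ge\norm{U^0}_{\hs}$, nonincreasing in $\rho$. One point needs care: the condition $R>\norm{\Phi(U^0)}_{E_T^s}$. By Corollary \ref{cor:triBound}, the quantity $\norm{\Phi(U^0)}_{E_T^s}$ is bounded by $C(T,\rho,\norm{R(U^0)}_{Y_T^{s-1}})$, and this bound tends to a limit controlled by $\rho$ as $T\to0$. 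Checking this is the main obstacle. I would handle it with a variant comparing $\Phi(U^0)$ to the solution with zero source, whose norm is controlled by $\norm{U^0}_{\hs}$ via Corollary \ref{cor:duaHsFinal}, plus a term of size $\sqrt T$. Equivalently, I would choose $R$ as a fixed increasing function of $\rho$ and then take $T$ small. The system is autonomous, so the same $T_\rho$ works when the problem is started from any time $t_0$ with data $U(t_0)$.

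Next, define $T^*$ as the supremum of the times $T$ for which a solution in $E_T^s$ exists. Uniqueness makes solutions on different intervals agree, so they glue into a maximal solution. Suppose $T^*<\infty$ and the claimed convergence fails. Then there are $\rho>0$ and times $t_n\to T^*$ with $\norm{U(t_n)}_{\hs}\le\rho$. Pick $n$ with $t_n> T^*-T_\rho$ and restart the Cauchy problem at $t_n$ with data $U(t_n)\in H^s(\T^d)$; this is legitimate because $U\in\mathcal C([0,t_n];H^s)$. The restart produces a solution on $[t_n,t_n+T_\rho]$. Concatenating it with $U$ gives a function in $E^s_{t_n+T_\rho}$: continuity in $H^s$ holds at the junction, the $L^2H^{s+1}$ integrability adds over the two pieces, and the weak formulation glues because both pieces are continuous in time at $t_n$. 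Since $t_n+T_\rho>T^*$, this contradicts maximality.

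This argument gives $\liminf_{t\to T^*}\norm{U(t)}_{\hs}=\infty$, which is the statement that $\norm{U(t)}_{\hs}\to\infty$. The only delicate point, as noted, is making sure the existence time depends only on $\norm{U^0}_{\hs}$ and not on finer properties of $U^0$.
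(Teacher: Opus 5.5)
Your proposal is correct and follows the paper's route: the Banach fixed point for $\Phi$ on a ball of $X_T^s$, uniqueness via the continuation argument, and restarting from times $t_n\to T^*$ with $\|U(t_n)\|_{H^s}\le\rho$ using an existence time $T_\rho$ that depends only on $\rho$. Your extra care with the constraint $R>\|\Phi(U^0)\|_{E_T^s}$ also fills in a step the paper only asserts, namely that the existence time is a decreasing function of $\|U^0\|_{H^s}$. You do this by taking $R$ to be an increasing function of $\rho$ that dominates the bound of Corollary \ref{cor:triBound} for $T\le 1$, and only then making $T$ small.
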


\subsection{Explosion criteria, proof of Proposition \ref{prop:exploLp}} \label{sec:explo}

In this section, we prove the following refinement of Proposition \ref{prop:localExist}:
\begin{Prop} \label{redLinfty} 
	For all $ U^0 \in H^s (\T^d)$, the maximal lifetime $T^*$ of the solution $U \in E_T^s $ from Proposition \ref{prop:localExist} satisfies either $T^* = + \infty $ or
	$$ \norm{U(t)}_{L^\infty(\T^d)} \underset{t \to T^*}{\longrightarrow} \infty .$$
\end{Prop}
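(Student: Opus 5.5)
The plan is to replace the $H^s$ blow-up criterion of Proposition \ref{prop:localExist} by an a priori estimate in which $H^s$ growth is controlled only through $L^\infty$ norms. This will be combined with a restart argument whose step size depends only on an $L^\infty$ bound at the restart time. That second ingredient is what upgrades a $\limsup$ statement to the full limit $\norm{U(t)}_{L^\infty}\to\infty$.

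\emph{Step 1 (linear-in-data $H^s$ estimate under an $L^\infty$ bound).} Fix $0\le t_0<t_1<T^*$ and set $M=\sup_{[t_0,t_1]}\norm{U}_{L^\infty}$. I treat each equation of \eqref{eq:SKTtr} on $[t_0,t_1]$ as a Kolmogorov equation with $\mu=\mu_k(U)$ and $f=r_k(U)$. I apply Corollary \ref{dualiteHsalt} with $\varepsilon=0$ and $p=\infty$, so that $P$ is evaluated at $L^\infty$ norms and is bounded by $C(M)$. By Proposition \ref{prop:bahouri}, the composition estimates give
\[
\norm{\mu_k(U)}_{\hsp}\le C(M)\bigl(1+\norm{(u_1,\dots,u_{k-1})}_{\hsp}\bigr),\qquad \norm{r_k(U)}_{H^{s-1}}\le C(M)\bigl(1+\norm{U}_{\hs}\bigr).
\]
The terms $\int \mean{\mu_k}\norm{u_k}_{\hs}^2$ are then of Grönwall type. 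The term $\int C(M)\norm{\mu_k(U)}_{\hsp}^2$ only involves $u_1,\dots,u_{k-1}$ in $Y^{s+1}$. Proceeding by induction on $k$, and summing with suitable weights so that the $Y^{s+1}$ norms of lower-index components are absorbed by the left-hand sides, I obtain
\[
\norm{U}_{E^s_{[t_0,t_1]}}^2\le C(M,t_1-t_0)\bigl(1+\norm{U(t_0)}_{\hs}^2\bigr).
\]
The triangular structure is essential here. It is what makes the dependence on $U(t_0)$ linear rather than superlinear, since no $\norm{u_k}_{\hsp}$ ever multiplies itself.

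\emph{Step 2 ($\limsup$).} Take $t_0=0$ and suppose $\sup_{[0,T^*)}\norm{U}_{L^\infty}<\infty$ with $T^*<\infty$. Step 1 then bounds $\norm{U(t)}_{\hs}$ uniformly on $[0,T^*)$, which contradicts Proposition \ref{prop:localExist}.

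\emph{Step 3 (full limit; main obstacle).} Suppose instead that $t_n\to T^*$ with $\norm{U(t_n)}_{L^\infty}\le M$. I want a time $\tau=\tau(M)>0$, independent of $n$, such that $\norm{U}_{L^\infty}\le 2M$ on $[t_n,t_n+\tau]\cap[t_n,T^*)$. Given such a $\tau$, Step 1 started at $t_n$, where $U(t_n)$ is finite in $H^s$, shows that the $H^s$ norm stays finite up to $t_n+\tau$. Choosing $n$ with $t_n+\tau>T^*$ then contradicts Proposition \ref{prop:localExist}.

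To get $\tau(M)$, I would first try a continuity argument on $L^\infty$ norms propagated componentwise. For $u_1$, which solves a heat equation with source $u_1\bar r_1(U)$, the source depends on all components. I would therefore aim for a joint bound on $[t_n,t_n+\tau]$: write each $u_k$ via Duhamel for the Kolmogorov operator and control its deviation from $U(t_n)$ in an $L^\infty$-type norm using the $E^{s'}$ estimates of Corollary \ref{cor:duaHsFinal} at a lower regularity $d/2<s'<s$, interpolated against Step 1. The hardest point, and the one I expect to need the most care, is that this deviation estimate must not involve $\norm{U(t_n)}_{\hs}$, which may be huge. If a clean bound depending only on $M$ is not available, my fallback is to exploit parabolic smoothing: bound $\norm{U(t)}_{H^{s}}$ for $t\ge t_n+\tau/2$ in terms of quantities at $t_n$ weighted by powers of $(t-t_n)$. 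This is consistent with the $\mathcal C^\infty$ regularity asserted in Theorem \ref{th:localExist}.
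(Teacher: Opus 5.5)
Your Steps 1 and 2 are correct and are precisely the paper's proof. The paper uses Proposition \ref{prop:bahouri} to bound $\mu_k(U)$ in $Y_T^{s+1}$ and $r_k(U)$ in $Y_T^{s-1}$ in terms of the $L^\infty$ bound. It then applies Corollary \ref{dualiteHsalt} with $\varepsilon=0$, runs an induction on $k$ to absorb the $Y_T^{s+1}$ norms of the lower-index components, applies Grönwall, and reaches a contradiction with Proposition \ref{prop:localExist}. The paper stops there. What it actually concludes is that $\norm{U}_{L^\infty(Q_T)}=\sup_{t<T}\norm{U(t)}_{L^\infty}\to\infty$ as $T\to T^*$. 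Only this $\sup$ form is used afterwards, in the proof of Proposition \ref{prop:exploLp} and in the smoothness argument of Section \ref{sec:end}.

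Your Step 3, which aims at the genuine pointwise-in-time limit, is a real gap. The time $\tau(M)$ is never produced, and the routes you suggest cannot produce it.

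\emph{Restart estimates.} Every quantitative estimate available involves $\norm{U(t_n)}_{H^{s'}}$: Corollaries \ref{cor:duaHsFinal} and \ref{dualiteHsalt} at any level $s'>d/2$, and the existence time $T_R$ of the fixed point. That quantity is not controlled by $M$, and it is unbounded as $t_n\to T^*$. Interpolating against Step 1 reintroduces it. For $s'\in(\max(\frac d2,s-1),s)$, combining $\partial_t U\in L^2(t_n,t;H^{s-1})$ with Step 1 gives a bound of the form $\norm{U(t)-U(t_n)}_{H^{s'}}\le (t-t_n)^{(s-s')/2}C(2M)\bigl(1+\norm{U(t_n)}_{\hs}\bigr)$. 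The resulting $\tau$ therefore shrinks to $0$ as $n\to\infty$.

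\emph{Maximum principle.} A maximum principle is available only for $u_1$, since $\mu_1$ is a constant. For $k\ge 2$ you have no $L^\infty$ estimate for $z\mapsto\partial_t z-\Delta(\mu_k(U)z)$ in terms of $\inf\mu_k$ and $\norm{\mu_k(U)}_{L^\infty}$ alone. Under mere boundedness of $\mu$, the paper only provides the duality estimate of Lemma \ref{lem:duality-1}. Writing $w=\mu_k(U)u_k$ gives $\partial_t w-\mu_k(U)\Delta w=\frac{\partial_t\mu_k(U)}{\mu_k(U)}w+\mu_k(U)r_k(U)$. This requires control of $\partial_t\mu_k(U)$, that is, of derivatives of $u_1,\dots,u_{k-1}$. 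Near $t_n$ their size is governed by $\norm{U(t_n)}_{\hs}$, not by $M$.

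\emph{Smoothing fallback.} Your parabolic-smoothing fallback presupposes the same missing fact. To bound $\norm{U(t)}_{\hs}$ for $t\ge t_n+\tau/2$ in terms of $M$, you must already know that the solution exists with $\norm{U}_{L^\infty}\le 2M$ on $[t_n,t_n+\tau]$. That amounts to a local theory whose lifetime depends only on the $L^\infty$ size of the data. The $X_T^s$ fixed point does not provide this, since its time depends on the $H^s$ radius.

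Your proposal, like the paper's proof, establishes the criterion in the form $\sup_{t<T}\norm{U(t)}_{L^\infty}\to\infty$ as $T\to T^*$. The pointwise-in-time version would need a genuinely new ingredient, namely an $L^\infty$-controlled local existence result for the system, and neither you nor the paper supplies one.
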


\begin{proof}
	
	We first observe that for all $ k \in\{1 ,\dots, n \}  $, by Proposition \ref{prop:bahouri} of the appendix we have the following two bounds
	$$\norm{\mu_k(u_1, \dots, u_{k-1})}_{Y_T^{s+1}} \lesssim_{\norm{  U}_{L^\infty(\QT)}} \sum_{i=1}^{k-1}\norm{u_i}_{Y_T^{s+1}} , $$
	$$\norm{r_k(U)}_{Y_T^{s-1}} \le \norm{r_k(U)}_{Y_T^{s}} \lesssim_{\norm{  U}_{L^\infty(\QT)}} \norm{U}_{Y_T^{s}}, $$
	where the constant behind $\lesssim_{\norm{  U}_{L^\infty(\QT)}}$ depends only on the parameters of the problem and $\norm{  U}_{L^\infty(\QT)}$.
	Thus, applying Corollary \ref{dualiteHsalt} with $\varepsilon=0$ one gets
	$$ \norm{u_k}_{E_T^s}^2 \lesssim_{T, \norm{  U}_{L^\infty(\QT)}} 1+  \norm{u_k^0}_{\hs}^2 + \norm{U}_{Y_T^{s}}^2   + \sum_{i=1}^{k-1}\norm{u_i}_{Y_T^{s+1}}^2.$$
	Exploiting the triangular structure of this system, we deduce by induction on $k$ that
	$$ \norm{U}_{E_T^s}^2 \lesssim_{T,\norm{  U}_{L^\infty(\QT)}}   \norm{U^0}_{\hs}^2 + \norm{U}_{Y_T^{s}}^2 .$$
	Grönwall's lemma then allows to absorb the integral term $\norm{U}_{Y_T^{s}}^2 $ from the right-hand side in the uniform term $\norm{U}_{X_T^s}^2$ contained in the left-hand side and write
	$$ \norm{U}_{E_T^s}^2 \lesssim_{T,\norm{  U}_{L^\infty(\QT)}}   \norm{U^0}_{\hs}^2 + 1 .$$
	Now, assume the lifetime $T^*$ of $U$ is finite. Then, by Proposition \ref{prop:localExist},
	$ \norm{U}_{E_T^s} \underset{T \to T^*}{\longrightarrow} \infty $ and the previous estimate implies explosion of $\norm{  U}_{L^\infty(\QT)}$ when $T \rightarrow T^*$.
	
\end{proof}

We now turn to the proof of the refined explosion criterion in Proposition \ref{prop:exploLp} when the parameters $(\mu_k)_{1  \le k \le n}$ and $(r_k)_{1  \le k \le n}$ have enough derivatives that are bounded by a polynomial. The proof relies on the following refined tame estimate for composition in Sobolev spaces, which is an instanciation of Proposition \ref{prop:wekaerComp} in the appendix.

\begin{Prop} \label{prop:wekaerCompInst}
	Let $s \ge 0$,  $\varepsilon>0$ and $f\in  \mathcal C^{\infty}(\R^n; \R) $ such that there exists constants $C,\alpha>0$ satisfying $|\nabla^{\lceil s \rceil}f(X)| \le C(1 + |X|^\alpha)$. Then, there exists $p<\infty$ and a polynomial function $P$ such that for all $U \in H^{s + \varepsilon}(\T^d) $,
	$$\norm{f(U)}_{H ^{s}}  \le P(\norm U _{L ^p} ) (1 + \norm U _ {H ^{s+ \varepsilon}} ). $$
\end{Prop}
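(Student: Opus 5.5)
We prove Proposition \ref{prop:wekaerCompInst} directly by a paraproduct/Faà di Bruno argument combined with Gagliardo--Nirenberg interpolation. The paper says it is an instance of Proposition \ref{prop:wekaerComp} in the appendix; the argument below is essentially the content of that result.

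\emph{Reduction to integer order.} Write $m=\lceil s\rceil$. It suffices to bound $\norm{f(U)}_{L^2}$ and the homogeneous part of order $s$. Since $|f(X)|\lesssim 1+|X|^{\alpha+m}$ (integrate the growth bound on $\nabla^m f$ $m$ times), we get $\norm{f(U)}_{L^2}\lesssim 1+\norm{U}_{L^{2(\alpha+m)}}^{\alpha+m}$. This is already polynomial in an $L^p$ norm.

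\emph{Integer case $s=m$.} By Faà di Bruno, $\nabla^m f(U)$ is a finite sum of terms
\[
(\nabla^j f)(U)\,\nabla^{k_1}U\cdots\nabla^{k_j}U, \qquad k_1+\dots+k_j=m,\ k_i\ge1,\ 1\le j\le m.
\]
For $j<m$, $\nabla^j f$ also has polynomial growth, of degree at most $\alpha+m-j$. Apply H\"older with exponents $q_i$ chosen so that $\nabla^{k_i}U\in L^{q_i}$, and put the factor $(\nabla^jf)(U)$ in $L^{r}$ with $r$ large; this factor is controlled polynomially by $\norm{U}_{L^{p}}$ for $p$ large. Then use Gagliardo--Nirenberg on the torus in the form
\[
\norm{\nabla^{k}U}_{L^{q}}\lesssim \norm{U}_{L^{p}}^{1-\theta}\norm{U}_{H^{m+\varepsilon}}^{\theta}+\norm{U}_{L^p}, \qquad \theta=\frac{k+d/2-d/q}{m+\varepsilon+d/2-d/p}\cdot(\text{appropriately}).
\]
The key point is the margin $\varepsilon>0$: it gives $\theta_i<k_i/m$ strictly, with room to spare even if $p$ is large but finite. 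One then chooses exponents with $\sum_i\theta_i\le1$, so that the product is at most $P(\norm U_{L^p})(1+\norm U_{H^{m+\varepsilon}})$ by Young's inequality. This exponent bookkeeping is the main obstacle. One needs the scaling identity
\[
\sum_i\Big(k_i-\frac d{q_i}\Big)=m-\frac d2+\frac dr,
\]
and must check that with $p$ large one can choose the $q_i\in[2,\infty)$ so that each $\theta_i\in[0,1]$ and $\sum\theta_i\le 1$. I expect that taking $q_i=2m/k_i$ (adjusted slightly) and letting the $\varepsilon$-gain absorb the loss from $p<\infty$ and $r<\infty$ works. Without $\varepsilon$, this would force $p=\infty$, matching Corollary \ref{dualiteHsalt}.

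\emph{Fractional case.} For noninteger $s$, write $s=m-1+\sigma$ with $\sigma\in(0,1)$. Bound $\norm{\nabla^{m-1}f(U)}_{H^\sigma}$ by applying the same Faà di Bruno expansion at order $m-1$ and then estimating each product in $H^\sigma$ with the fractional Leibniz (Kato--Ponce) rule, transferred to $\T^d$ via Appendix \ref{sec:transfer}. For the factor $(\nabla^jf)(U)$, use the elementary estimate $\norm{g(U)}_{W^{\sigma,q}}\lesssim \norm{\nabla g(U)}_{L^{r}}\norm{U}_{W^{\sigma,q'}}$. Only $\nabla^{j+1}f$ with $j+1\le m$ appears, so the hypothesis on $\nabla^{\lceil s\rceil}f$ is exactly what is needed. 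The same interpolation bookkeeping, now with fractional Gagliardo--Nirenberg, then closes the estimate.
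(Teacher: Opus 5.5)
Your route works once one auxiliary estimate is corrected (second paragraph), but it is not the paper's route.

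\textbf{How the two proofs differ.} The paper proves the $L^r$-based Proposition \ref{prop:wekaerComp}, of which this statement is the case $r=2$, by induction on $s$ in unit steps:
\begin{itemize}
\item it bounds $\norm{f(U)}_{H^{s,r}}$ by $\norm{f(U)}_{H^{s-1,r}}+\norm{J^{s-1}\big((\nabla f)(U)\nabla U\big)}_{L^r}$;
\item it splits the last term as $(\nabla f)(U)J^{s-1}\nabla U+[J^{s-1},(\nabla f)(U)]\nabla U$;
\item it controls the commutator with Li's estimate (Proposition \ref{prop:liComut}) and the induction hypothesis applied to $\nabla f$ in spaces $H^{\cdot,\bar q}$ with $\bar q\neq 2$, which is why general $r$ is needed;
\item it closes with the bilinear interpolation Lemma \ref{lem:calculGN}.
\end{itemize}
The base case $0\le s<1$ is a Gagliardo double-integral computation.

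You instead expand once by Fa\`a di Bruno and treat the leftover fractional derivative with a multi-factor Kato--Ponce rule. On $\T^d$ this rule follows from Proposition \ref{prop:liComut} via $J^\sigma(FG)=[J^\sigma,F]G+FJ^\sigma G$, since the transference lemma is stated for commutators. Your approach makes $p$ and $\deg P$ explicit and needs an $L^a$-based composition estimate only for $0<\sigma<1$. The cost is multilinear bookkeeping over all partitions, whereas the paper handles only bilinear products at each step.

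The bookkeeping you left as an expectation does close. With the interpolation of Proposition \ref{prop:GN}, take $\theta_i=k_i/(m+\varepsilon)$ and $\frac1{q_i}=\frac{\theta_i}{2}+\frac{1-\theta_i}{p}$. Then $\sum_i\theta_i=\frac{m}{m+\varepsilon}<1$, and
$\frac1r=\frac12-\sum_i\frac1{q_i}=\frac{\varepsilon}{2(m+\varepsilon)}-\frac{j-\sum_i\theta_i}{p}>0$ for $p$ large. Since $r\to 2(m+\varepsilon)/\varepsilon$, putting $(\nabla^j f)(U)$ in $L^r$ costs only a finite $L^p$ norm. In the fractional case the analogous choice, with $\theta_i$ measured against $s+\varepsilon$, gives $\sum_i\theta_i\le\frac{s+\eta}{s+\varepsilon}<1$ for $\eta<\varepsilon$. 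Your displayed formula for $\theta$ is not the right one; the scaling relation is $k-\frac dq=\theta\big(m+\varepsilon-\frac d2\big)-(1-\theta)\frac dp$.

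\textbf{The estimate that must be replaced.} The claim $\norm{g(U)}_{W^{\sigma,q}}\lesssim\norm{\nabla g(U)}_{L^{r}}\norm{U}_{W^{\sigma,q'}}$ is false as written. The difference $g(U(x))-g(U(y))$ involves $\nabla g$ on the whole segment $[U(x),U(y)]$, which $\nabla g(U)$ does not sample. A counterexample, for $\sigma q'<1$:
\begin{itemize}
\item let $U$ be a smoothed step from $0$ to $1$ with transition width $w\to 0$, which stays bounded in $W^{\sigma,q'}$;
\item let $\nabla g\ge 0$ be supported near $\frac12$ with $g(1)-g(0)=1$.
\end{itemize}
Then $\norm{\nabla g(U)}_{L^r}\to 0$, while $g(U)$ keeps a unit jump and its $W^{\sigma,q}$ norm stays bounded below.

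What the growth hypothesis on $\nabla^{j+1}f$ actually gives is the two-point bound $|g(U(x))-g(U(y))|\lesssim(1+|U(x)|^\beta+|U(y)|^\beta)\,|U(x)-U(y)|$. H\"older on $\T^{2d}$ against the kernel then lands in $W^{\tilde\sigma,\tilde q}$ with $\tilde q>q$ and $\tilde\sigma=\sigma+d\big(\frac1q-\frac1{\tilde q}\big)$. A further small loss is needed to pass between the Slobodeckij space $W^{\sigma,q}$ and the Bessel space $H^{\sigma,q}$ in which the Leibniz rule lives. Both losses are absorbed by $\varepsilon$, by taking $\tilde q$ close to $q$ and $p$ larger. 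This is exactly the paper's base case, and it yields $\norm{g(U)}_{H^{\sigma,a}}\le P(\norm{U}_{L^p})\big(1+\norm{U}_{H^{\sigma+\eta,a}}\big)$. That is the estimate to feed into your Leibniz step.
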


Having in mind the example of logistic reaction terms, we prove that when , the explosion criterion can be refined:

\begin{proof}[Proof of Proposition \ref{prop:exploLp}]  Let $s_0 \in (\frac d 2, \min(s,\frac d 2 +1 ))$ and $\varepsilon >0 $ such that $s_0- \varepsilon n > \frac d 2$ and set $s_k = s- \varepsilon k$. The assumption on $\mu_k$ allows to apply Proposition \ref{prop:wekaerCompInst} to get the existence of $p < \infty$ and a polynomial $P: \R^2 \rightarrow \R_+$  such that for all $ k \in\{1 ,\dots, n \}  $, 
	\begin{equation*}
		\norm{\mu_k(u_1, \dots, u_{k-1})}_{Y_T^{s_k+1 + \varepsilon /2}}^2  \le  \int_0^T P(\norm{  U(t)}_{L^p} )^2 \left( 1+  \sum_{i=1}^{k-1}\norm{u_i(t)}_{H^{s_k+1 + 3\varepsilon /4}} ^2 \right) \dt .
	\end{equation*} 
	We also bound the reaction terms using Proposition \ref{prop:wekaerCompInst} and the fact that $s_k -1 < s_n$ for all $ 1 \le k \le n$:
	$$\norm{r_k(U)}_{Y_T^{s_k-1}}^2 \le \int_0^T P(\norm{  U(t)}_{L^p} )^2 ( 1 +\norm{U(t)}_{H^{s_n}} ^2) \dt.$$
	Plugging these bounds in the estimate from Corollary \ref{dualiteHsalt} and using that $s_k > s_n$, we get the existence of $p < \infty$  and a polynomial $P$ such that 
	\begin{align}
		\norm{u_k}_{E_T^{s_k}}^2 &\lesssim \norm{u_k^0}_{H^{s_k}}^2+ \int_0^T \mean{P(u_k(t))} \norm{u_k(t)}_{H^{s_k}}^2\, \dd t  \nonumber
		\\
		&\hspace{3.5cm} + \int_0^T P(\norm{  U(t)}_{L^p} )^2 \left( 1+ \norm{U(t)}_{H^{s_n}} ^2 + \sum_{i=1}^{k-1}\norm{u_i(t)}_{H^{s_k+1 + 3\varepsilon /4}} ^2 \right) \, \dd t.  \nonumber \\
		& \lesssim \norm{U^0}_{H^s}^2
		+ \int_0^T P(\norm{  U(t)}_{L^p} )^2 \left( 1+ \sum_{i=1}^{n}\norm{u_i(t)}_{H^{s_i}} ^2 + \sum_{i=1}^{k-1}\norm{u_i(t)}_{H^{s_k+1 + 3\varepsilon /4}} ^2 \right) \, \dd t.   \label{eq:LpExp1}
	\end{align}
	Let us bound the last integral. Up to replacing $p$ by $\max( p, 2 \deg P)$ we have 
	$$ \int_0^T P(\norm{  U(t)}_{L^p} )^2 \dt \lesssim T + \norm {U}_{L^p(\QT)}^p \lesssim_{ T, \norm {U}_{L^p(\QT)}} 1 . $$
	On the other hand, letting $r >1$ to be fixed later and using Hölder's inequality,
	\begin{equation} \label{eq:LpExp2}
		\int_0^T P(\norm{  U(t)}_{L^p} )^2 \norm{u_i(t)}_{H^{s_k+1 + 3\varepsilon /4}} ^2 \dt \le \left(\int_0^T P(\norm{  U(t)}_{L^p} )^{2r'} \dt \right)^{\frac 1 {r'}} \norm{u_i}_{L^{2r}_T H^{s_k+1 + 3\varepsilon /4}}^2.
	\end{equation} 
	Up to choosing $p$ large enough, the integral in the right-hand side can be bounded by an increasing function of $T$ and $\norm {U}_{L^p(\QT)}$.
	The assumptions on $(s_i)_{1 \le i  \le n}$ imply that $s_{k-1} + 1 > s_{k} + 1 + \frac 3 4 \varepsilon $ and $s_{k} + 1 + \frac 3 4 \varepsilon > \frac d 2 +1 > s_{k-1} $ so there exists $\theta \in (0,1)$ such that
	$$  s_{k} + 1 + \frac 3 4 \varepsilon = \theta (s_{k-1} + 1) + (1 - \theta) s_{k-1}.$$
	Setting $r = \frac 1 \theta $, we have $ \frac 1 {2r} =  \frac \theta 2 + \frac{1 - \theta} {\infty}$ so $L^{2r}_T H^{s_k+1 + 3\varepsilon /4}$ can be interpolated between $X_T^{ s_{k-1}}$ and $Y_T^{ s_{k-1}}$ so for all $1 \le i\le k-1$,
	$$ \norm{u_i}_{L^{2r}_T H^{s_k+1 + 3\varepsilon /4}} \lesssim   \norm{u_i}_{Y_T^{s_{k-1}+1}}^{\theta}  \norm{u_i}_{X_T^{s_{k-1}}}^{1- \theta} \lesssim \norm{u_i}_{E_T^{s_{k-1}}} \lesssim \norm{u_i}_{E_T^{s_{i}}}. $$
	Plugging this in \eqref{eq:LpExp2} and then in \eqref{eq:LpExp1}, we get for $p < \infty$ large enough
	\begin{equation*} 
		\norm{u_k}_{E_T^{s_k}}^2 \lesssim_{T, \norm {U}_{L^p(\QT)}} \norm{U^0}_{H^s}^2 + \sum_{i=1}^{n} \int_0^T (1+ \norm{  U(t)}_{L^p}^p ) \norm{u_i(t)}_{H^{s_i}} ^2 \, \dd t + \sum_{i=1}^{k-1} \norm{u_i}_{E_T^{s_{i}}}^2 .
	\end{equation*} 
	Using the triangular structure of this system and then using Grönwall's lemma like in the proof of the previous proposition we deduce
	$$  \sum_{k=1}^n \norm{u_k}_{E_T^{s_k}}^2 \lesssim_{T, \norm {U}_{L^p(\QT)}}   1+  \norm{U^0}_{\hs}^2  .$$
	In particular, since $s_k \ge s_n > \frac d 2$, we have a bound on $\norm{  U}_{L^\infty(\QT)}$ depending only on $T,  \norm {U}_{L^p(\QT)}$ and $ \norm{U^0}_{\hs}$.
	Now, assume the lifetime $T^*$ of $U$ is finite. Then, by the $L^\infty(\QT)$ explosion criterion (Proposition \ref{redLinfty}) 
	$ \norm{U}_{L^\infty(\QT)} \underset{T \to T^*}{\longrightarrow} \infty $, so we must also have explosion of $ \norm {U}_{L^p(\QT)}$ when $T \rightarrow T^*$.
	
\end{proof}
\begin{Rmk}
	\begin{enumerate}[label=(\roman*)]
		\item Using a Sobolev embedding, we deduce that under the hypotheses of Proposition \ref{prop:exploLp} there exists $s < \frac d 2$ such that if a solution blows up, then the blow-up happens in $\norm{\cdot}_{\hs}$ norm.
		\item It is clear from the above proof that if we already know that $\norm{u_1}_{L^\infty(\QT)}, \dots, \norm{u_k}_{L^\infty(\QT)} $ do not blow up, then the polynomial bound assumption on $\mu_j$ can be removed for $j \le k+1$.
	\end{enumerate}
\end{Rmk}

\subsection{Smoothness and sign preservation } \label{sec:end}

To complete the proof of Theorem \ref{th:localExist}, there remains to prove the smoothness of the solutions and the stability estimate. Under Hypotheses \ref{ass:1}—\ref{ass:4}, let $U^0 \in H^s(\T ^d) $ and $U \in E_T^s $ be the associated solution, with maximal lifetime $T^*$. By Sobolev embedding, to prove the smoothness it is sufficient to prove that for all $k\in \mathbb N$ and $r> d/ 2$, $U \in \mathcal C^k (  (0, T^*); H ^r(\T^d))$. \\

\emph{Space smoothness.} We first prove that $U \in \mathcal C (  [0, T^*); H ^s(\T^d))$. Let $\tilde T$ be the maximal time for which $U \in \mathcal C (  [0, \tilde T); H ^s(\T^d))$. By Proposition \ref{prop:localExist}, $\tilde T > 0$. Suppose that $\tilde T < T^*$ and let $R = \sup_{t \le \tilde T} \norm{U(t)}_{\hs}$. By the explosion criterion, $R$ is finite and the construction used to prove Proposition \ref{prop:localExist} builds a time $T_R$ such that for all $t < \tilde T$, $U$ exists up to time $t+ T_R$ and $U \in  \mathcal C (  [t, t+ T_R); H ^s(\T^d))$. Choosing $t \in (\tilde T - T_R, \tilde T)$, one obtains a contradiction with the maximality of $\tilde T$. Therefore, $\Tilde T = T^*$ and $U \in \mathcal C (  [0, T^*); H ^s(\T^d))$.

We now want to deduce by induction that for all $r> d/ 2$, $U \in \mathcal C (  (0, T^*); H ^r(\T^d))$. Let $r> d/ 2$ such that $U \in \mathcal C (  (0, T^*); H ^r(\T^d))$. Then in particular, for all $t_0>0$, $U(t_0) \in H^r(\T^d)$ and Proposition \ref{prop:localExist} applied with initial time $t_0$ and in the space $E_T^r$ implies that for some $\delta > 0$, $U \in L ^2 ([t_0, t_0 + \delta]; H^{r+1}(\T^d))$. Thus, there exists $\varepsilon > 0$ such that $U(\varepsilon) \in H^{r+1}(\T^d)$. The time $\varepsilon$ can be chosen arbitrarily small since it is also the case of $t_0$. Since $U$ is continuous is time, $U(\varepsilon)$ is a well-defined function and it can be used as an initial condition for the Cauchy problem \eqref{eq:SKTtr}. Therefore, the discussion at the beginning of the proof of space smoothness implies that $U \in \mathcal C (  [\varepsilon, T_{r+1}); H ^{r+1}(\T^d))$, where $T_{r+1}$ is the maximal lifetime of the solution in $H^{r+1}(\T^d)$. But the explosion criterion from Proposition \ref{redLinfty} states that $T_{r+1}$ is actually the lifetime of the solution in $L^\infty(\T^d)$, which is $T^*$. Since moreover $\varepsilon$ can be chosen arbitrarily small, we proved $U \in \mathcal C (  (0, T^*); H ^{r+1}(\T^d))$.  \\

\emph{Time smoothness.} For time smoothness we also proceed by induction and suppose that for some $k\in \mathbb N$, all $r> d/ 2$ satisfy $U \in \mathcal C^k (  (0, T^*); H ^r(\T^d))$. Note that the previous part of the proof serves as the initial step of the induction. Let $r> d/ 2 $. By Proposition \ref{prop:bahouri} from the appendix, for all $i \in \{1, \dots  n\}$,
$$ \mu_i(U) u_i \in \mathcal C^k (  (0, T^*); H ^{r+2}(\T^d)) \qquad \mathrm{ and } \qquad R_i(U) \in \mathcal C^k (  (0, T^*); H ^r(\T^d)).$$
Reading equation \eqref{eq:SKTtr}, this implies $\partial_t u_i \in \mathcal C^k (  (0, T^*); H ^r(\T^d))$, hence $u_i \in \mathcal C^{k+1} (  (0, T^*); H ^r(\T^d))$. This concludes the proof of smoothness. \\

\emph{Stability estimate.} Let $U$ and $V$ be two solutions of \eqref{eq:SKTtr}. Applying the stability estimate from Proposition \ref{prop:triangStab} and then Proposition \ref{prop:bahouri} gives
\begin{align} 
	\norm{U-V}_{E_T^{s}}^2 
	&\lesssim_{ T, \norm{U}_{E_T^{s}}, \norm{V}_{E_T^{s}}} \norm{U^0 - V^0}^2 _{\hs} +  \norm{R(U) - R(V)}_{Y_T^{s-1}}^2 \nonumber \\
	&\lesssim_{T,  \norm{U}_{E_T^s},  \norm{V}_{E_T^s}} \norm{U^0 - V^0}^2_{\hs} + \norm{U-V}_{Y_T^{s}}^2  . \label{eq:refstab}
\end{align}
This inequality contains 
$$  \norm{U(T) - V(T)}^2_{\hs} \lesssim_{T,  \norm{U}_{E_T^s},  \norm{V}_{E_T^s}} \norm{U^0 - V^0}^2_{\hs} + \int_0^T \norm{U(t) - V(t)}^2_{\hs} \, \dd t, $$
so by Grönwall's lemma $\norm{U-V}_{E_T^{s}}^2 \lesssim_{T,  \norm{U}_{E_T^s},  \norm{V}_{E_T^s}}  \norm{U^0 - V^0}^2_{\hs}$, which is the wanted stability estimate. This concludes the proof of Theorem \ref{th:localExist}. \\

\emph{Sign preservation.} The argument to prove sign preservation under \ref{ass:4} is the same as the one used to prove Proposition 1.4 in \cite{gallagher_cauchy_2025}. We send the reader to this article for more details.

\section{Proof of Theorem \ref{th:global}} \label{sec:5}

In this section, we assume \ref{ass:1}–\ref{ass:4} and \ref{ass:6}, $d \le 2$, $n =2$ and $U^0 \in H^s(\T^d)$. In this setting, \eqref{eq:SKTtr} rewrites
\begin{equation} \label{eq:reacSimp}
	\left\{\begin{array}{ll}
		\partial_t u_2  - \Delta[\mu_2(u_1) u_2]= u_2\bar r_2(u_1,u_2) \\
		\partial_t u_1  - \mu_1\Delta u_1= u_1\bar r_1(u_1,u_2)\\
		(u_1,u_2)(t=0, \cdot) = (u_1^0, u_2^0)
	\end{array} \right. 
\end{equation} 
\\
Let $T>0$ be a time of existence of the solution $U$. We will allow the constant behind $\lesssim $ to also depend on $U^0$ and $T$. For example, we can now write $\norm{U^0}_{\hs} + T \lesssim 1$. We already have an $L^\infty(\QT)$ bound on $u_1$ since the maximum principle (see \cite[Theorem 6.4.2]{evans_partial_1988} for a detailed discussion of maximum principles) and the positivity of the solutions proved in Theorem \ref{th:localExist} together give
$$ \norm{u_1}_{L^\infty(\QT)} \le \exp( (\sup \bar r_1) t ) \norm{u_1^0}_{L^\infty} \lesssim 1 .  $$

According to the remarks following the proof of Proposition \ref{prop:exploLp}, no polynomial bound assumption on $\mu_2$ is needed to apply it. Therefore, \ref{ass:6} is sufficient to apply Proposition \ref{prop:exploLp} and we now have to prove $\norm{U}_{L^p(\QT)} \lesssim 1$ for all $p < \infty$. Since $u_1$ is already bounded in $L^\infty(\QT)$, there remains to bound $\norm{u_2}_{L^p(\QT)}$.
Using the Sobolev embedding $ H^1(\T^d) \hookrightarrow L^p(\T^d)$ for all $d\le 2$ and $p< \infty$,
$$ \norm{u_2}_{L^p(\QT)} \lesssim  \norm{u_2}_{L^\infty_TL^p} \lesssim  \norm{u_2}_{X_T^1} \lesssim \norm{u_2}_{E_T^1}. $$
Therefore, it is sufficient to prove $ \norm{u_2}_{E_T^1} \lesssim 1$ to conclude. We do so by starting with low-regularity bounds and improve them iteratively until obtaining a bound on $\norm{u_2}_{E_T^1}$. Note that since the solutions built by Theorem \ref{th:localExist} become instantaneously smooth, up to slightly moving the origin of time we can assume $U^0$ to be smooth. This will ensure that all the quantities in which $U^0$ appears are finite.

\emph{Step 1. Bound in $E_T^{-1}$.} The duality lemma \ref{lem:duality-1} gives
\begin{multline*}
	\norm{u_2(T)}_{H^{-1}}^2 + \int_\QT \mu_2(u_1) u_2^2 \\
	\lesssim \norm{u_2^0}_{H^{-1}}^2 + \norm{\mu_2(u_1)}_{L^\infty(\QT)}\int_0^T \norm{u_2(t)}_{H^{-1}}^2 \, \dd t + \int_0^T \norm{r_2(u_1(t),u_2(t))}_{H^{-2}}^2 \, \dd t.
\end{multline*} 
Since $u_2$ is non-negative and $r_2(u_1,u_2) \le  u_2  \sup \bar r_2$, Lemma \ref{lem:duality-1} allows to replace $r_2(u_1,u_2) $ by $ u_2  \sup \bar r_2$ in the above estimate and get
$$\norm{u_2(T)}_{H^{-1}}^2 + \int_\QT \mu_2(u_1) u_2^2 \lesssim 1 + \int_0^T (\norm{u_2(t)}_{H^{-1}}^2 + \norm{u_2(t)}_{H^{-2}}^2) \, \dd t. $$
Since $\norm{\cdot}_{H^{-2}} \le\norm{\cdot}_{H^{-1}}  $ and $ \inf \mu_2 >0$, using Grönwall's lemma yields the bound $\norm{u_2}_{L^2(\QT)} \lesssim 1$. Using an energy estimate on $u_1$ and the bound $|\bar r_1(X)| \lesssim 1 +|X|$, this last bound implies 
$$ \norm{u_1}_{E_T^1} \lesssim \norm{u_1 \bar r_1(u_1,u_2)}_{L^2(\QT)} \le \norm{u_1}_{L^\infty(\QT)} \norm{\bar r_1(u_1,u_2)}_{L^2(\QT)} \lesssim 1 + \norm{u_1}_{L^2(\QT)} + \norm{u_2}_{L^2(\QT)} \lesssim 1.$$
The interpolation inequality $ \norm{u_1(t)}_{\frac 3 2} \lesssim \norm{u_1(t)}_{1}^{\frac 1 2}\norm{u_1(t)}_{2}^{\frac 1 2}  $ then implies a bound of $u_1$ in $L^4([0,T]; H^{\frac 3 2}(\T^d))$. Hence, by the Sobolev embedding $H^{\frac 1 2}(\T^d) \hookrightarrow L^4(\T^d)$ in dimension $d\le 2$, $ \norm{\nabla u_1}_{L^4(\QT)} \lesssim 1$.

\emph{Step 2. Bound in $E_T^0$.} We multiply the equation on $u_2$ by $u_2$ itself and integrate by parts to get
$$ \frac 1 2 \norm{u_2(T)}_{L^2}^2 + \int_\QT \mu_2( u_1) |\nabla u_2|^2 =  \frac 1 2 \norm{u_2^0}_{L^2}^2 - \int_\QT u_2 \nabla u_2 \cdot \nabla (\mu_2(u_1)) + \int_\QT u_2^2 \bar r_2(u_1, u_2). $$ 
Using Hölder's inequality and the fact that $\inf \mu_2 > 0$,
$$ \norm{u_2(T)}_{L^2}^2 + \int_\QT  |\nabla u_2|^2 \lesssim 1 +  \int_0^T \norm{u_2(t)}_{L^4} \norm{\nabla u_2(t)}_{L^2} \norm{ \nabla (\mu_2(u_1(t)))}_{L^4} \, \dd t   + \int_\QT u_2^2.$$
Since $\nabla (\mu_2(u_1)) = \mu_2'(u_1)\nabla u_1$, $\mu_2' $ is continuous and $\norm{u_1}_{L^\infty(\QT)} \lesssim 1$, we have $\norm{ \nabla (\mu_2(u_1(t)))}_{L^4} \lesssim \norm{ \nabla u_1(t)}_{L^4}$. Using additionally the bound $\norm{u_2(t)}_{L^4} \lesssim \norm{u_2(t)}_{\frac 1 2} \lesssim \norm{u_2(t)}_{L ^2}^{\frac 1 2}\norm{u_2(t)}_{1}^{\frac 1 2}$,
$$ \int_0^T \norm{u_2(t)}_{L^4} \norm{\nabla u_2(t)}_{L^2} \norm{ \nabla (\mu_2(u_1(t)))}_{L^4} \, \dd t \lesssim \int_0^T \norm{u_2(t)}_{L^2}^{\frac 1 2} \norm{u_2(t)}_{1}^{\frac 3 2} \norm{ \nabla u_1(t)}_{L^4} \, \dd t$$
Therefore, applying Young's inequality and injecting the previous estimate in the above one, for all $\varepsilon > 0$,
$$ \norm{u_2(T)}_{L^2}^2 + \int_\QT  |\nabla u_2|^2 \lesssim 1 +   C_\varepsilon \int_0^T \norm{u_2(t)}_{L^2}^{ 2} \norm{ \nabla u_1(t)}_{L^4}^4 \, \dd t +  \varepsilon \int_0^T \norm{u_2(t)}_{1}^{2}\, \dd t + \int_0^T \norm{u_2(t)}_{L^2}^2 \, \dd t,$$
where $C_\varepsilon$ is a constant depending only on $\varepsilon$. Up to adding $\norm{u_2}_{Y_T^0}^2$ on both sides, we can assume that we actually have $\norm{u_2}_{Y_T^1}^2$ on the left-hand side. Thus, fixing $\varepsilon$ small enough to absorb the $\norm{u_2}_{Y_T^1}^2$ from the right-hand side,
$$ \norm{u_2(T)}_{L^2}^2 + \norm{u_2}_{Y_T^1}^2 \lesssim 1 +  \int_0^T \norm{u_2(t)}_{L^2}^{ 2} \norm{ \nabla u_1(t)}_{L^4}^4 \, \dd t  + \int_0^T \norm{u_2(t)}_{L^2}^2 \, \dd t,$$
and by Grönwall's lemma
$$  \norm{u_2}_{E_T^0}^2 \lesssim \exp \left(T + \int_0^T \norm{ \nabla u_1(t)}_{L^4}^4 \, \dd t \right).$$
This is sufficient to deduce $\norm{u_2}_{E_T^0}\lesssim 1$ since we already proved $\norm{ \nabla u_1}_{L^4(\QT)} \lesssim 1$ at the end of Step 1.
Using once again the Sobolev embedding $H^{\frac 1 2} \rightarrow L ^4$ and interpolation, we have  $\norm{u_2}_{L^4(\QT)}\lesssim \norm{u_2}_{E_T^0}\lesssim 1$. By parabolic regularity (see for example \cite{salamon_parabolic_2017}), we also have
$$ \norm{\Delta u_1}_{L^4(\QT)} \lesssim \norm{ \Delta u_1^0 }_{L^4} +  \norm{u_1 \bar r_1(u_1, u_2)}_{L^4(\QT)} \lesssim  \norm{ u_1^0 }_{H^{\frac 5 2}} + \norm{1+ u_2}_{L^4(\QT)} \lesssim 1. $$

\emph{Step 3. Bound in $E_T^1$ and conclusion.} We can once again improve the bound on $u_2$, this time by applying Proposition \ref{prop:dualiteHs}:
·\begin{equation} \label{eq:1749}
	\norm{u_2}_{E_T^1}^2 \lesssim   \norm{u_2^0}_{1}^2+ \int_0^T \norm{u_2(t)}_{1}^2\, \dd t + \norm{u_2  \bar r_2(u_1,u_2)}_{Y_T^{0}}^2 + \int_0^T \norm{[\mu_2(u_1(t)), J^{2}] u_2(t)}_{L^2}^2\, \dd t.
\end{equation} 
Let's examine the terms on the right-hand side. The first one is bounded and the second one can be treated using Grönwall's lemma. For the third one, notice that $Y_T^0 = L^2(\QT)$ and
$$ \norm{u_2  \bar r_2(u_1,u_2)}_{L^2(\QT)}^2 \lesssim \int_\QT u_2^2(1 + u_1^2 + u_2^2) \lesssim 1.  $$ 
For the last term, we compute the commutator by remembering that by definition of $J$, $J^2= I - \Delta$:
$$[\mu_2(u_1), J^{2}] u_2 = -[\mu_2(u_1), \Delta] u_2 = u_2 \Delta[\mu_2(u_1)] + 2 \nabla u_2 \cdot \nabla [\mu_2(u_1)].  $$
Using Hölder's inequality and the chain rule, we get
$$ \norm{u_2 \Delta \mu_2(u_1)}_{L^2(\QT)} \le
\norm{u_2}_{L^4(\QT)}\norm{\Delta\mu_2(u_1)}_{L^4(\QT)} \lesssim \norm{u_2}_{L^4(\QT)} \norm{\Delta u_1}_{L^4(\QT)} \lesssim 1.$$
There remains to bound the last term, using once again the chain rule
$$\norm{\nabla u_2 \cdot \nabla [\mu_2(u_1)]}_{L^2(\QT)} \le \int_0^T \norm{\nabla u_2(t)}_{L^2}^2\norm{\nabla u_1(t)}_{L^\infty}^2 \, \dd t. $$
We can absorb this term using Grönwall's lemma since $ \norm{u_2}_{X_T^1}^2 $ appears in the left-hand side of \eqref{eq:1749}. This would conclude the proof of $\norm{u_2}_{E_T^1} \lesssim 1 $ if 
$\int_0^T\norm{\nabla u_1(t)}_{L^\infty}^2 \, \dd t \lesssim 1.$
To prove this final estimate, remark that by Sobolev embedding, we have for all $t$ that $\norm{\nabla u_1(t)}_{L^\infty} \lesssim \norm{u_1(t)}_{L^\infty} + \norm{\Delta u_1(t)}_{L^4}$. Therefore,
$$ \int_0^T\norm{\nabla u_1(t)}_{L^\infty}^2 \, \dd t \lesssim T + \int_0^T\norm{\Delta u_1(t)}_{L^4} \, \dd t \lesssim 1 + \norm{\Delta u_1}_{L^4(\QT)}.$$
Since we already proved $\norm{\Delta u_1}_{L^4(\QT)} \lesssim 1$ at the end of Step 2, this concludes the proof.

\section{Appendix}

\subsection{A transference principle} \label{sec:transfer}

In this appendix, we give an elementary proof of a transference principle for commutator estimates and Gagliardo-Nirenberg type inequalities from $\R^d$ to the periodic domain $\T^d$. Our main motivation is to transfer the following Kato-Ponce type estimate proved by Li in  \cite[Theorem 1.9]{li_kato-ponce_2016}.
\begin{Prop} \label{prop:liComutR}
	Let $s\ge 0$ and $r, p,\bar p, q, \bar q \in [2, \infty]$ such that $\frac 1 p + \frac 1 {\bar p} =\frac 1 q + \frac 1 {\bar q} = \frac 1 r$. Then for any $u,v \in \mathcal S(\T^d)$, the following estimate holds:
	$$\norm{[J^{s}, u]v}_{L^{r}(\R^d)} \lesssim \norm{J^{s-1} \nabla u}_{L^{\bar p}(\R^d)} \norm{v}_{L^p(\R^d)} + \mathbf 1 _{s > 1} \norm{ \nabla u}_{L^{\bar q}(\R^d)} \norm{J^{s-2}\nabla v}_{L^{q}(\R^d)},$$
	where $ \mathbf 1 _{s > 1}$ indicates that the second term is present only if $s > 1$.
\end{Prop}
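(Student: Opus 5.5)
This statement is exactly \cite[Theorem 1.9]{li_kato-ponce_2016}, so the paper itself only needs to quote it. Two small reading conventions: the functions should be taken in $\mathcal S(\R^d)$, since the norms are over $\R^d$, and the periodic version is deduced afterwards by the transference argument of this appendix. Still, here is the route I would follow to prove it from scratch: a Littlewood--Paley paraproduct decomposition, with each piece estimated by Coifman--Meyer type bilinear multiplier theorems.

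\emph{Step 1: decomposition.} Write Bony's decomposition $uv = T_u v + T_v u + R(u,v)$, where $T_u v = \sum_j S_{j-1}u\,\Delta_j v$ and $R(u,v)=\sum_{|j-k|\le 1}\Delta_j u\,\Delta_k v$. Then
$$[J^s,u]v = [J^s,T_u]v + \big(J^s T_v u - T_{J^s v}u\big) + \big(J^s R(u,v) - R(u,J^s v)\big).$$

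\emph{Step 2: the low--high commutator $[J^s,T_u]v$.} On the block $S_{j-1}u\,\Delta_j v$, expand the symbol $\langle\xi+\eta\rangle^s-\langle\xi\rangle^s$ to first order in the low frequency $\eta$ of $u$. This gives $\nabla S_{j-1}u\cdot \nabla_\xi\langle\xi\rangle^s(\Delta_j v)$ plus a remainder that is a smooth bilinear symbol of Coifman--Meyer class.
\begin{itemize}
\item If $s>1$, the main part is controlled by $\norm{\nabla u}_{L^{\bar q}}\norm{J^{s-2}\nabla v}_{L^q}$, which is the second term of the estimate.
\item If $s\le 1$, the gain $\langle\xi\rangle^{s-1}$ is a bounded multiplier. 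In that case I would instead put the whole derivative loss on $u$, using $|\eta|\,\langle\xi\rangle^{s-1}\lesssim \langle\eta\rangle^{s-1}|\eta|$ on the support $|\eta|\ll|\xi|$. This yields $\norm{J^{s-1}\nabla u}_{L^{\bar p}}\norm{v}_{L^p}$, so no second term is needed.
\end{itemize}

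\emph{Step 3: the pieces where $u$ carries high frequency.} In $J^sT_vu$, $T_{J^sv}u$, $J^sR(u,v)$ and $R(u,J^sv)$, the frequency of $u$ dominates or is comparable to the output frequency. The derivatives $J^s$ can therefore be moved onto $u$: away from frequency zero, $\langle\eta\rangle^s\sim \langle\eta\rangle^{s-1}|\eta|$, and the lowest block of $u$ is treated with the Taylor expansion of Step 2. Bilinear multiplier bounds then give $\norm{J^{s-1}\nabla u}_{L^{\bar p}}\norm{v}_{L^p}$. For $R(u,J^sv)$ the high--high structure lets $J^s$ be transferred from $v$ to $u$ at comparable frequencies, giving the same bound.

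\emph{Main obstacle.} The difficulty is the endpoint exponents: $\bar p$, $p$, $\bar q$ or $q$ equal to $\infty$, or $r=\infty$. There the Coifman--Meyer theorem and square-function characterisations of $L^r$ fail. This is precisely the new content of \cite{li_kato-ponce_2016}, which handles these cases by exploiting the restriction $r\ge 2$ together with a careful frequency-localised analysis instead of square functions. I would first prove the non-endpoint cases as above, and rely on Li's argument for the endpoints.
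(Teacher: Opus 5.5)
Your route is the paper's. The paper gives no argument of its own and simply quotes \cite[Theorem 1.9]{li_kato-ponce_2016} for Schwartz functions on $\R^d$ (you are right that $\mathcal S(\T^d)$ is a typo), then deduces the periodic version with Lemma~\ref{lem:transference}. Your paraproduct sketch is an optional extra and a reasonable outline for finite exponents. Some of the symbols you produce are not of Coifman--Meyer class, for instance after factoring out $\langle\eta\rangle^{s-1}\eta$ when $s<1$; they need an extra dyadic splitting with geometric decay, but that is routine.

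What is wrong is your account of the endpoints. Input exponents $p,\bar p,q,\bar q=\infty$ are not the obstruction, since Coifman--Meyer-type bounds $L^\infty\times L^r\to L^r$ hold for $1<r<\infty$. The only genuine endpoint is $r=\infty$, and there the inequality is false. It therefore cannot be delegated to Li, whose commutator estimates have a finite output exponent. For a counterexample, take $s=1$, so only the first term is present. Let $u\in\mathcal S(\R^d)$ with $u(x)=x_1$ on $B(0,10)$, and let $v$ be smooth and supported in $B(0,1)$. Since $uv=x_1v$ and $u=x_1$ on $B(0,10)$, on that ball
\[
[J,u]v=[J,x_1]v=-\partial_1J^{-1}v .
\]
The kernel of $\partial_1J^{-1}$ behaves like $c\,x_1|x|^{-d-1}$, with $c\neq 0$, near the origin. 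Choose $v$ close to $\mathrm{sign}(x_1)$ on $\{\delta<|x|<1/2\}$, vanishing on $B(0,\delta)$, with $\norm{v}_{L^\infty}\le 1$. Then $|[J,u]v(0)|\gtrsim\log(1/\delta)$, while the right-hand side is $\norm{\nabla u}_{L^\infty}\norm{v}_{L^\infty}\lesssim 1$.

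Likewise, the restriction $r\ge 2$ plays no role in Li's method. It comes from the Hausdorff--Young step in the paper's transference argument (Proposition~\ref{prop:comutGrossier}). Quoting Li, or completing your sketch, therefore establishes the statement exactly for $2\le r<\infty$.

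The paper's statement has the same overreach at $r=\infty$, but it is harmless: only $r=2$ (Corollary~\ref{cor:kato}, Lemma~\ref{lem:comutLp}) and $r\in[2,\infty)$ (Proposition~\ref{prop:wekaerComp}) are ever used. The right fix is to exclude $r=\infty$ from the statement, not to attribute that case to Li.
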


The Fourier transform (resp.\ Fourier coefficients) of a distribution $ T \in \mathcal S'(\R^d)$  (resp.\ $ \mathcal D'(\T^d)$) will be denoted $\hat T$. We call Fourier multiplier any operator $L :  \mathcal S(\R^d) \rightarrow  \mathcal S'(\R^d)$ such that there exists a function $\hat L: \R^d \rightarrow \mathbb C$ satisfying $\widehat{Lf} = \hat L \hat f$. The function $\hat L$ is called the \emph{symbol} of $L$. 

Even though there already exists a transference theory for multilinear Fourier multipliers \cite{fan_transference_2001}, we require a slight generalization that allows an upper bound with the same structure as the one in Proposition \ref{prop:liComutR}. Our result is the following.


\begin{Lemma} \label{lem:transference}
	Let $L_0, \dots , L_{4}$ be Fourier multipliers whose symbols $\hat L_0, \dots, \hat L_{4} $ are continuous at each point of $\Z^d$ and bounded by a polynomial. \begin{enumerate}[label=(\roman*)]
		\item Let $p, p_1, p_2 \in [2, \infty]$ and $\theta \in [0,1]$ such that $\frac 1 p = \frac \theta {p_1} +  \frac {1-\theta} {p_2} $ and assume that the following estimate holds for all $u \in \mathcal S (\R^d)$:
		$$ \norm{L_0 u }_{L^p(\R^d)}  \lesssim  \norm{L_{1} u }_{L^{p_1}(\R^d)}^{\theta}   \norm{L_{2} u }_{L^{ p_2}(\R^d)}^{1 - \theta} .$$ 
		Then, replacing the domain of the norms by $\T^d$, the estimate holds for all $u \in \mathcal D (\T^d)$.
		\item Let $r, p,\bar p, q, \bar q \in [2, \infty]$ such that $\frac 1 p + \frac 1 {\bar p} = \frac 1 q + \frac 1 {\bar q}  = \frac 1 r$ Assume that the following estimate holds for all $u,v \in \mathcal S (\R^d)$:
		$$\norm{[L_0, u]v}_{L^{r}} \lesssim \norm{L_{1} u}_{L^{\bar p}} \norm{L_{2} v}_{L^p} + \norm{L_{3} u}_{L^{\bar q}} \norm{L_{4} v}_{L^{q}}.$$
		Then, replacing the domain of the norms by $\T^d$, the estimate holds for all $u,v \in \mathcal D (\T^d)$.
	\end{enumerate}

\end{Lemma}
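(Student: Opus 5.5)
We prove both parts with the same device: a periodic function is read as a function on $\R^d$, localized by a wide, slowly varying cutoff, and the cutoff is then removed in the limit. Fix a Schwartz function $\psi$ with $\psi(0)=1$ whose Fourier transform $\hat\psi$ is compactly supported in the unit ball. For $R\ge 1$ set $\psi_R(x)=\psi(x/R)$. Let $u\in\mathcal D(\T^d)$ and let $\tilde u$ be its $\Z^d$-periodic extension to $\R^d$. Define $u_R=\psi_R\tilde u\in\mathcal S(\R^d)$. Its Fourier transform is $\hat u_R(\xi)=\sum_{k\in\Z^d}\hat u(k)R^d\hat\psi(R(\xi-k))$, a sum of bumps of width $1/R$ centred at the lattice points.

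The first key step is a multiplier approximation. Take a Fourier multiplier $L$ whose symbol is continuous at every point of $\Z^d$ and polynomially bounded. We want to show that
$$L u_R=\psi_R\,\widetilde{Lu}+\rho_R,\qquad \norm{\rho_R}_{L^p(\R^d)}=o\big(R^{d/p}\big),$$
for every $p\in[2,\infty]$. The bumps are disjoint, and the coefficients $\hat u(k)$ decay rapidly while the symbols grow at most polynomially, so we can write
$$\rho_R=\sum_k \hat u(k)\,e^{2i\pi k\cdot x}\,\big(L_k-\hat L(k)\big)\psi_R,$$
where $L_k$ is the multiplier with symbol $\hat L(k+\cdot)$. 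For $p=2$, Plancherel reduces the estimate to $\sup_{|\eta|\le 1/R}|\hat L(k+\eta)-\hat L(k)|\to0$, which is exactly continuity at $k$. For $p=\infty$ we use the Hausdorff–Young bound $\norm{\cdot}_{L^\infty}\le\norm{\hat\cdot}_{L^1}$ instead. Interpolation then covers all $p\in[2,\infty]$. This restriction to $p\ge 2$ is why the lemma assumes all exponents are at least $2$.

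The second key step is a Riemann-sum normalization. For periodic $g\in\mathcal C(\T^d)$,
$$R^{-d}\int_{\R^d}|\psi_R\tilde g|^p=\int|\psi|^p(y)\,|\tilde g(Ry)|^p\,\dd y\longrightarrow \norm{\psi}_{L^p}^p\norm{g}_{L^p(\T^d)}^p .$$
The limit is obtained by averaging the periodic function $|\tilde g|^p$ against the slowly varying weight $|\psi|^p(\cdot/R)$. For $p=\infty$ one instead uses $\norm{\psi_R\tilde g}_{L^\infty}\to\norm{g}_{L^\infty}$, which holds because $\psi(0)=1$. Products of periodic functions are handled in the same way. In (ii) we write
$$[L_0,u_R]v_R=\psi_R\big(L_0(\psi_R\tilde u\tilde v)\big)-\ldots,$$
and for this we need a second cutoff. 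We apply the estimate to $u_R$ and $v_R'=\psi'_R\tilde v$ with $\psi'=\psi$. The product $\psi_R^2$ still has compactly supported Fourier transform, so the first step applies with $\psi^2$ in place of $\psi$.

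With these two steps the argument is the same for (i) and (ii). We apply the $\R^d$ estimate to $u_R$ (and $v_R$), replace each $L_ju_R$ by $\psi_R\widetilde{L_ju}$ up to errors of size $o(R^{d/p})$, and divide by $R^{d/p}$. In (i) the homogeneity matches because $\frac1p=\frac\theta{p_1}+\frac{1-\theta}{p_2}$. In (ii) it matches because $\frac1r=\frac1p+\frac1{\bar p}$. Letting $R\to\infty$ gives the periodic inequality with the constants $\norm{\psi}_{L^{p_i}}$, which are harmless. The main obstacle is the commutator term in (ii): its two parts $L_0(u_Rv_R)$ and $u_RL_0v_R$ must each be approximated, and the error must be controlled with the product cutoff $\psi_R^2$ rather than $\psi_R$. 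We handle this by normalizing with $\psi^2$ on the left-hand side. The resulting factor $\norm{\psi^2}_{L^r}$ is at most $\norm{\psi}_{L^{p}}\norm{\psi}_{L^{\bar p}}$ by Hölder, up to a fixed constant, so it is absorbed into the implicit constant.
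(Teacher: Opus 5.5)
Your argument is correct and is essentially the paper's: the paper uses the Gaussian cutoff $\chi_\varepsilon(x)=\varepsilon^d\chi(\varepsilon x)$, which is your $\psi_R$ with $R=1/\varepsilon$ up to normalization. It proves $\norm{[L,\chi_\varepsilon]u}_{L^p(\R^d)}=o(\varepsilon^{(1-\frac 1p)d})$ for $p\ge 2$ in one step via Hausdorff--Young and dominated convergence, where you use a band-limited cutoff with Plancherel, the $L^1$ bound and interpolation; it then normalizes by Riemann sums, splits $[L_0,\chi_\varepsilon u]\chi_\varepsilon v$ using the squared cutoff, and matches the powers of $\varepsilon$. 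Minor fixes: the exact identity is $[L_0,u_R]v_R=L_0(\psi_R^2\tilde u\tilde v)-\psi_R\tilde u\,L_0(\psi_R\tilde v)$; you only need $\norm{\psi^2}_{L^r}>0$ so you can divide by it (the Hölder upper bound is beside the point); and $\norm{\psi_R\tilde g}_{L^\infty}\to\norm{\psi}_{L^\infty}\norm{g}_{L^\infty}$, which equals $\norm{g}_{L^\infty}$ only if $|\psi|\le\psi(0)=1$.
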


Distributions on $\T^d$ will be identified with the $\Z^d$-periodic distributions over $\R^d$. The identification mapping $\mathcal D'(\T^d) \rightarrow \mathcal S'(\R^d)$ can be seen as the dual of the wrapping map, which associates to any $f \in \mathcal S (\R^d)$ the $\mathcal D(\T ^d )$ function
$$ x \mapsto \sum_{k \in \Z^d} f(x + k).$$

Let $T \in \mathcal D'(\T^d)$ and $T' \in  \mathcal S'(\T^d)$ be its $\Z^d$-periodic counterpart. Testing against functions in $\mathcal S(\T^d)$, one can see that $\hat T'$ of $T'$ is supported on $\Z^d$ and coincides with  $\hat T$. Therefore, we identify both $T'=T$ and $ \hat T' = \hat T$. As a consequence, when applying Fourier multipliers to $T$ it does not matter whether we are using the $ \mathcal D'(\T^d)$ or the  $\Z^d$-periodic $\mathcal S ' (\R^d)$ representation of $T$.



We fix $\chi \in \mathcal{S} (\R^d)$ a non-zero function satisfying $ 0 \le \chi \le 1$ and consider the family of rescaled functions $\chi_\varepsilon : x \mapsto \varepsilon^d\chi(\varepsilon x)$. In order to make constants depending on $\chi$ universal constants, we make the explicit choice $\chi(x) = e^{-|x|^2}$. For any functions $f,g: \R_+ \rightarrow \R_+$, we denote  $f(\varepsilon) \overset{\varepsilon \to 0}{\sim} g(\varepsilon)$ and $f(\varepsilon) \overset{\varepsilon \to 0}{=} o(g(\varepsilon))$ when $\frac {f(\varepsilon)}{g(\varepsilon)}$ converges respectively to $0$ and $1$.

\begin{Prop} \label{prop:independance}
	Let $p \ge 1$ and $u \in L^p(\T^d)$. Then 
	$$ \norm{\chi_\varepsilon u }_{L^p(\R^d)}  \overset{\varepsilon \to 0}{\sim} \varepsilon^{(1-\frac 1 p)d} \norm{\chi}_{L^p(\R^d)} \norm{u }_{L^p(\T^d)}.$$
\end{Prop}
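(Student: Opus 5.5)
The plan is to compute the $p$-th power of the left-hand side exactly, reduce it to a Riemann sum for $\int_{\R^d}\chi^p$ weighted by $|u|^p$, and then take $p$-th roots. If $\norm{u}_{L^p(\T^d)}=0$, both sides vanish and there is nothing to prove, so we assume $u\neq 0$. Set $g=\chi^p=e^{-p|x|^2}$, which is a Schwartz function with $\int_{\R^d} g=\norm{\chi}_{L^p(\R^d)}^p$. Viewing $u$ as a $\Z^d$-periodic function on $\R^d$, we have
$$\norm{\chi_\varepsilon u}_{L^p(\R^d)}^p=\varepsilon^{dp}\int_{\R^d} g(\varepsilon x)|u(x)|^p\,\dd x=\varepsilon^{dp}\int_{[0,1)^d}|u(y)|^p\,S_\varepsilon(y)\,\dd y,\qquad S_\varepsilon(y):=\sum_{k\in\Z^d} g(\varepsilon(y+k)).$$
This identity follows by splitting $\R^d$ into the translated cubes $k+[0,1)^d$ and using periodicity of $u$. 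Exchanging sum and integral is legitimate by Tonelli, since everything is non-negative.

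The key step is the uniform Riemann-sum estimate
$$\sup_{y\in[0,1)^d}\Big|\varepsilon^d S_\varepsilon(y)-\int_{\R^d} g\Big|\lesssim \varepsilon \qquad (0<\varepsilon\le 1).$$
To prove it, tile $\R^d$ by the cubes $Q_k=\varepsilon(k+[0,1)^d)$ and write $\int g=\sum_k\int_{Q_k}g(z)\,\dd z$. Since the point $\varepsilon(y+k)$ lies in $Q_k$, the mean value theorem gives
$$\Big|\varepsilon^d g(\varepsilon(y+k))-\int_{Q_k} g\Big|\le \sqrt d\,\varepsilon\cdot\varepsilon^d\sup_{Q_k}|\nabla g|.$$
Next use the Gaussian decay $|\nabla g(z)|\lesssim(1+|z|)^{-d-1}$. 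Because $Q_k$ has diameter at most $\sqrt d\,\varepsilon\le\sqrt d$, we have $\sup_{Q_k}|\nabla g|\lesssim \inf_{Q_k}(1+|z|)^{-d-1}$. Hence
$$\varepsilon^d\sum_k\sup_{Q_k}|\nabla g|\lesssim \int_{\R^d}(1+|z|)^{-d-1}\,\dd z<\infty,$$
uniformly in $\varepsilon\le 1$, which yields the claimed $O(\varepsilon)$ bound. I expect this to be the only genuinely technical point. The danger is that a naive bound on the sum of sups could blow up, and the polynomial-decay comparison on cubes of bounded size is what controls it.

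With this estimate in hand, we get
$$\Big|\varepsilon^{-dp+d}\norm{\chi_\varepsilon u}_{L^p(\R^d)}^p-\norm{\chi}_{L^p(\R^d)}^p\norm{u}_{L^p(\T^d)}^p\Big|\lesssim \varepsilon\,\norm{u}_{L^p(\T^d)}^p\xrightarrow[\varepsilon\to0]{}0.$$
Since $\norm{\chi}_{L^p}\norm{u}_{L^p(\T^d)}>0$, the ratio of $\norm{\chi_\varepsilon u}_{L^p(\R^d)}^p$ to $\varepsilon^{(p-1)d}\norm{\chi}_{L^p}^p\norm{u}_{L^p(\T^d)}^p$ tends to $1$. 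Taking $p$-th roots, using continuity of $t\mapsto t^{1/p}$ at $1$, gives the stated equivalence with exponent $(1-\frac1p)d$.
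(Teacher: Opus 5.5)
Your proof is correct and follows essentially the same route as the paper: you split $\R^d$ into unit cubes, use the periodicity of $u$, compare with a Riemann sum for $\int\chi^p$, and control the error by the mean value theorem together with the polynomial decay of $\nabla\chi$. The only difference is bookkeeping: the paper freezes $\chi_\varepsilon$ at the lattice points $k$, quotes Riemann-sum convergence, and bounds the freezing error separately, whereas you keep the exact shifted sums $S_\varepsilon(y)$ and prove their convergence uniformly in $y$ with an explicit $O(\varepsilon)$ rate, which makes your argument slightly more self-contained.
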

\begin{proof}
	We split $\R^d$ in cubes: $\R^d = \bigcup_{k \in \mathbb \Z^d} k+ C$ with $C = [0,1)^d$. Since $u$ is periodic, $\int_{k+C} u^p = \int_{\T^d} u^p$ for all $k \in \mathbb \Z^d$. Therefore,
	$$
	\int_{\R^d } |\chi_\varepsilon u|^p  = \sum_{k \in \mathbb \Z^d} \chi_\varepsilon(k)^p \int_{\T^d } |u|^p + \sum_{k \in \mathbb \Z^d}  \int_{k + C} (\chi_\varepsilon^p -\chi_\varepsilon(k)^p  ) |u|^p .
	$$
	On one hand, by convergence of Riemann sums,
	$$ \sum_{k \in \mathbb \Z^d} \chi_\varepsilon(k)^p  = \varepsilon^{(p-1)d} \varepsilon^d \sum_{k \in \mathbb \Z^d} \chi(\varepsilon k)^p \overset{\varepsilon \to 0}{\sim} \varepsilon^{(p-1)d} \int_{\R^d} \chi^p.$$
	On the other hand, for all $x \in k + C$, since $0 \le \chi \le 1$,
	$$|\chi_\varepsilon(x)^p -\chi_\varepsilon(k)^p  | \le p\varepsilon^{pd+1}  \norm{\nabla \chi}_{L^\infty(\varepsilon(k + C ) ) } \lesssim \frac{\varepsilon^{(p-1)d +\frac 1 2}}{1 +|k|^{d+ \frac 1 2}}  \norm{( 1 + |\cdot|^{d + \frac 1 2})\nabla \chi}_{L^\infty(\R^d) } . $$
	Thus, 
	$$ \sum_{k \in \mathbb \Z^d}  \int_{k + C} |\chi_\varepsilon^p -\chi_\varepsilon(k)^p  | |u|^p \lesssim  \varepsilon^{(p-1)d +\frac 1 2} \left(\int_{\T^d } |u|^p \right) \sum_{k \in \mathbb \Z^d} \frac {1}{1 +|k|^{d+ \frac 1 2}} \overset{\varepsilon \to 0}{=} o(\varepsilon^{(p-1)d}), $$
	hence the result.
\end{proof}

\begin{Prop} \label{prop:comutGrossier}
	Let $p \ge 2$ and $L$ be a Fourier multiplier such that $\hat L$ is continuous at each point of $\Z^d$ and bounded by a polynomial.
	Then for all  $u \in \mathcal D (\T^d)$,
	$$ \norm{ [L, \chi_\varepsilon ] u}_{L^p(\R^d)}  \overset{\varepsilon \to 0}{=} o(\varepsilon^{(1-\frac 1 p)d} ). $$ 
\end{Prop}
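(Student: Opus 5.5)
Since $u \in \mathcal D(\T^d)$ is smooth and periodic, I will expand it in its Fourier series $u = \sum_{k \in \Z^d} \hat u(k) e_k$, with $e_k(x) = e^{2i\pi k\cdot x}$. The coefficients $\hat u(k)$ decay faster than any polynomial. Because $\chi_\varepsilon e_k$ has Fourier transform $\hat\chi_\varepsilon(\xi - k)$, the commutator can be computed mode by mode:
$$ [L,\chi_\varepsilon](e_k)\,\hat{}\,(\xi) = \big(\hat L(\xi) - \hat L(k)\big)\hat\chi_\varepsilon(\xi-k), $$
using that $L e_k = \hat L(k) e_k$ for the periodic representation. Moreover $\hat\chi_\varepsilon(\eta) = \hat\chi(\eta/\varepsilon)$ concentrates at scale $\varepsilon$ near $0$, since $\chi_\varepsilon(x) = \varepsilon^d\chi(\varepsilon x)$.

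The first step is to bound a single mode. I will bound $\norm{[L,\chi_\varepsilon]e_k}_{L^p(\R^d)}$ via Hausdorff--Young ($p \ge 2$, so $L^{p'}\to L^p$):
$$ \norm{[L,\chi_\varepsilon]e_k}_{L^p} \le \norm{(\hat L(k+\cdot)-\hat L(k))\hat\chi(\cdot/\varepsilon)}_{L^{p'}}. $$
After the change of variables $\eta = \varepsilon\zeta$, this equals
$$ \varepsilon^{d/p'}\norm{(\hat L(k+\varepsilon\zeta)-\hat L(k))\hat\chi(\zeta)}_{L^{p'}_\zeta}, $$
and $d/p' = (1-\frac 1p)d$ is exactly the target scale. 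The factor $\hat L(k+\varepsilon\zeta)-\hat L(k)$ tends to $0$ pointwise as $\varepsilon\to0$ by continuity of $\hat L$ at $k\in\Z^d$. It is dominated by $C(1+|k|)^N(1+|\zeta|)^N$ for $\varepsilon\le1$, by the polynomial bound, and $\hat\chi$ is Gaussian. Dominated convergence then gives $\norm{[L,\chi_\varepsilon]e_k}_{L^p} = o(\varepsilon^{(1-1/p)d})$ for each fixed $k$, together with the uniform bound $\lesssim \varepsilon^{(1-1/p)d}(1+|k|)^N$.

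The second step sums over modes. By the triangle inequality,
$$ \varepsilon^{-(1-1/p)d}\norm{[L,\chi_\varepsilon]u}_{L^p} \le \sum_k |\hat u(k)|\,\varepsilon^{-(1-1/p)d}\norm{[L,\chi_\varepsilon]e_k}_{L^p}. $$
Each term tends to $0$ and is dominated by $|\hat u(k)|(1+|k|)^N$, which is summable by rapid decay. Dominated convergence for series concludes.

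The main obstacle is justifying the commutator formula for the mode $e_k$: one must check that $L$ acting on the $\mathcal S(\R^d)$ function $\chi_\varepsilon e_k$ and on the tempered distribution $e_k$ is given by the symbol computation. This follows because $\hat L$ has polynomial growth, so multiplication by $\hat L$ is well defined on both $\hat\chi_\varepsilon(\cdot-k)$ and $\delta_k$. Continuity of $\hat L$ at $k$ is what makes the value $\hat L(k)$ meaningful. The Hausdorff--Young step also requires $(\hat L(k+\cdot)-\hat L(k))\hat\chi(\cdot/\varepsilon)\in L^{p'}$, which holds thanks to the Gaussian decay of $\hat\chi$.
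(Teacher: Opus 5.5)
Your proof is correct and follows essentially the same route as the paper. You write the Fourier transform of $[L,\chi_\varepsilon]u$ as $\sum_k \hat u(k)\,(\hat L(\xi)-\hat L(k))\,\hat\chi_\varepsilon(\xi-k)$, rescale $\xi-k=\varepsilon\zeta$ to extract $\varepsilon^{d/p'}=\varepsilon^{(1-\frac 1p)d}$, conclude by dominated convergence from continuity of $\hat L$ on $\Z^d$ and its polynomial bound, and return to physical space with Hausdorff--Young. The only difference is bookkeeping: you sum the modes with the triangle inequality after a per-mode bound, while the paper applies a weighted H\"older inequality in $k$ before integrating in $\xi$; your exponent $\varepsilon^{d/p'}$ is the correct one, where the paper's text has the typo $\varepsilon^{p'/d}$.
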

\begin{proof}
	First, we note that since $\hat L$ is polynomially bounded, $L u \in \mathcal D ( \T^d) $. 
	We have $\chi_\varepsilon u, \chi_\varepsilon L u \in \mathcal S (\R^d)$. Therefore, letting $g: \R^d \rightarrow \mathbb C$ be the Fourier transform of $[L, \chi_\varepsilon ] u $, we have $g \in L^1_{loc}(\R^d)$ and 
	$$ g(\xi) =  \sum_{k \in \Z^d}  \hat L(\xi) \hat \chi_\varepsilon(k - \xi) \hat u(k) -   \hat \chi_\varepsilon(k - \xi)\hat L(k) \hat u(k).$$
	Splitting $ |\hat u(k)| =  |\hat u(k)|^{\frac 1 p}|\hat u(k)|^{\frac 1 {p'}}$ where $p'$ is the conjuguate exponent of $p$ and using Hölder's inequality together with the fact that $ \chi_\varepsilon(\eta) =  \chi ( \frac \eta \varepsilon)$, one has
	$$ \norm{g}_{L^{p'}(\R^d)}^{p'} \le  \norm{\hat u }_{\ell^1(\Z^d)}^{p'-1}  \sum_{k \in \Z^d} \int_{\R^d} \left|(\hat L(\xi) -  \hat L(k) )  \hat \chi\left(\frac{k - \xi} \varepsilon\right) \right|^{p'} |\hat u(k)| \, \dd \xi ,$$
	and using the change of variables $\eta = \frac{k - \xi} \varepsilon$,
	$$ \norm{g}_{L^{p'}(\R^d)}^{p'} \le \varepsilon^d \norm{\hat u }_{\ell^1(\Z^d)}^{p'-1}  \sum_{k \in \Z^d} \int_{\R^d} \left|(\hat L(k - \varepsilon \eta) -  \hat L(k) )  \hat \chi\left(\eta \right) \right|^{p'} |\hat u(k)| \, \dd \eta .$$
	The continuity of $\hat L$ implies that the integrand converges to $0$ almost everywhere and the polynomial upper bound on $\hat L$ together with the decay of $\hat \chi$ and $\hat u$ ensure that the dominated convergence theorem applies and that the integral converges to $0$. Therefore,
	$ \norm{g}_{L^{p'}(\R^d)} = o( \varepsilon^{\frac {p'} d })$
	Since $p \ge 2$, the Hausdorff-Young inequality $\norm{[L, \chi ] u}_{L^{p}(\R^d)} \lesssim \norm{g}_{L^{p'}(\R^d)} $ concludes the proof.
\end{proof}

\begin{Cor} \label{cor:approxTransfer}
	With the assumptions of Proposition \ref{prop:comutGrossier}, for all $u,v \in \mathcal D (\T^d)$ the two following asymptotic equivalences hold
	$$ \norm{L (\chi_\varepsilon u)}_{L^p(\R^d)} \overset{\varepsilon \to 0}{\sim}\varepsilon^{(1-\frac 1 p)d} \norm{\chi}_{L^p(\R^d)}  \norm{L  u}_{L^p(\T^d)},$$
	$$ \norm{[L ,\chi_\varepsilon u]\chi_\varepsilon v }_{L^p(\R^d)}\overset{\varepsilon \to 0}{\sim}   \varepsilon^{(2-\frac 1 p)d} \norm{\chi^2}_{L^p(\R^d)} \norm{ [L,u]  v}_{L^p(\T^d)}   .$$ 
\end{Cor}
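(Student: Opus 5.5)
The first equivalence follows from Proposition \ref{prop:independance} combined with Proposition \ref{prop:comutGrossier}. We write
$$L(\chi_\varepsilon u) = \chi_\varepsilon\, Lu + [L,\chi_\varepsilon]u .$$
Since $\hat L$ is polynomially bounded, $Lu \in \mathcal D(\T^d) \subset L^p(\T^d)$. Proposition \ref{prop:independance} applied to $Lu$ gives
$$\norm{\chi_\varepsilon Lu}_{L^p(\R^d)} \sim \varepsilon^{(1-\frac1p)d}\norm{\chi}_{L^p}\norm{Lu}_{L^p(\T^d)}.$$
Proposition \ref{prop:comutGrossier} shows that the commutator term is $o(\varepsilon^{(1-\frac1p)d})$. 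The triangle inequality then yields the equivalence whenever $\norm{Lu}_{L^p(\T^d)}\neq 0$. In the degenerate case $Lu=0$, both sides are $o(\varepsilon^{(1-\frac1p)d})$, and we interpret the equivalence in that sense, as in the statement of the earlier results.

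For the second equivalence, we expand the commutator algebraically so that the leading term is $\chi_\varepsilon^2 [L,u]v$ and every remainder involves a commutator with a cutoff:
$$[L,\chi_\varepsilon u]\chi_\varepsilon v = L(\chi_\varepsilon^2 uv) - \chi_\varepsilon u\, L(\chi_\varepsilon v).$$
Writing $L(\chi_\varepsilon^2 uv) = \chi_\varepsilon^2 L(uv) + [L,\chi_\varepsilon^2](uv)$ and $L(\chi_\varepsilon v) = \chi_\varepsilon Lv + [L,\chi_\varepsilon]v$, we obtain
$$[L,\chi_\varepsilon u]\chi_\varepsilon v = \chi_\varepsilon^2 [L,u]v + [L,\chi_\varepsilon^2](uv) - \chi_\varepsilon u\,[L,\chi_\varepsilon]v .$$
\begin{itemize}
\item \emph{Main term.} We apply Proposition \ref{prop:independance}, or rather its proof, with $\chi$ replaced by $\chi^2$, since $\chi_\varepsilon^2 = \varepsilon^{d}(\chi^2)_\varepsilon$. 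This gives $\norm{\chi_\varepsilon^2 [L,u]v}_{L^p(\R^d)} \sim \varepsilon^{d}\varepsilon^{(1-\frac1p)d}\norm{\chi^2}_{L^p}\norm{[L,u]v}_{L^p(\T^d)}$, which is the claimed order $\varepsilon^{(2-\frac1p)d}$. The proof of Proposition \ref{prop:independance} uses only that $\chi$ is a bounded Schwartz function with $\nabla\chi$ decaying fast, so it applies verbatim to $\chi^2$.
\item \emph{Second term.} We rerun Proposition \ref{prop:comutGrossier} with $\chi^2$ in place of $\chi$. The factor $\varepsilon^d$ in $\chi_\varepsilon^2=\varepsilon^d(\chi^2)_\varepsilon$ gives $o(\varepsilon^{(2-\frac1p)d})$, because $uv\in\mathcal D(\T^d)$.
\item \emph{Third term.} We bound it by $\norm{\chi_\varepsilon u}_{L^\infty}\norm{[L,\chi_\varepsilon]v}_{L^p(\R^d)} \le \varepsilon^d\norm{u}_{L^\infty}\, o(\varepsilon^{(1-\frac1p)d})$, which is again $o(\varepsilon^{(2-\frac1p)d})$.
\end{itemize}
The triangle inequality concludes.

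The main obstacle is checking that Proposition \ref{prop:comutGrossier} really transfers to the cutoff $\chi^2$. Its proof only uses the decay of $\hat\chi$, and $\widehat{\chi^2}$ is also Schwartz, so the dominated convergence argument goes through unchanged. A secondary point is the bookkeeping of the powers of $\varepsilon$ coming from the normalization $\chi_\varepsilon=\varepsilon^d\chi(\varepsilon\cdot)$, which is exactly what produces the exponent $(2-\frac1p)d$.
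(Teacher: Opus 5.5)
Your proof is correct and follows the paper's argument essentially verbatim: the same splitting $L(\chi_\varepsilon u)=\chi_\varepsilon Lu+[L,\chi_\varepsilon]u$ for the first equivalence, and the same three-term decomposition $[L,\chi_\varepsilon u]\chi_\varepsilon v=\chi_\varepsilon^2[L,u]v+[L,\chi_\varepsilon^2](uv)-\chi_\varepsilon u[L,\chi_\varepsilon]v$ with the rescaling $\chi_\varepsilon^2=\varepsilon^d(\chi^2)_\varepsilon$ for the second. Two details in your write-up are left implicit in the paper: your check that Propositions~\ref{prop:independance} and~\ref{prop:comutGrossier} still hold with $\chi^2$ in place of $\chi$, and your remark on the degenerate case where the limiting norm vanishes.
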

\begin{proof}
	The right-hand side of the first asymptotic equivalence corresponds to $\norm{ \chi_\varepsilon L u}_{L^p(\R^d)}$, as explained in Proposition \ref{prop:independance}. The remainder is the commutator estimated in Proposition \ref{prop:comutGrossier},
	$$| \norm{L (\chi_\varepsilon u)}_{L^p(\R^d)} -  \norm{ \chi_\varepsilon L u}_{L^p(\R^d)} | \le \norm{ [L, \chi_\varepsilon ] u}_{L^p(\R^d)} = o(\varepsilon^{(1-\frac 1 p)d}),$$
	which is sufficient to conclude the first asymptotic equivalence.
	
	For the second one, we use the following decomposition: 
	\begin{align*}
		[L ,\chi_\varepsilon u]\chi_\varepsilon v &=   L( \chi_\varepsilon u \chi_\varepsilon v) - \chi_\varepsilon u L(  \chi_\varepsilon v) \\
		&= [L ,\chi_\varepsilon^2](uv) + \chi_\varepsilon^2 L(uv) -  \chi_\varepsilon u [L, \chi_\varepsilon] v -  \chi_\varepsilon^2 u L  v \\
		&=  [L ,\chi_\varepsilon^2](uv)  -  \chi_\varepsilon u [L, \chi_\varepsilon] v +  \chi_\varepsilon^2 [L,u]  v.
	\end{align*}

	Setting $\tilde \chi = \chi^2 \in \mathcal{S}(\R^d)$ and $ \tilde \chi _\varepsilon(x) = \varepsilon^d \tilde \chi( \varepsilon x)$, we have $ \chi_\varepsilon^2 = \varepsilon^d  \tilde \chi_\varepsilon$ and applying Proposition \ref{prop:comutGrossier}
	$$\norm{[L ,\chi_\varepsilon^2](uv)}_{L^p(\R^d)} = \varepsilon^d \norm{[L ,\tilde \chi_\varepsilon](uv)}_{L^p(\R^d)}= o(\varepsilon^{(2-\frac 1 p)d} ) . $$
	The next term is bounded using Proposition \ref{prop:comutGrossier} after the Hölder inequality:
	$$\norm{ \chi_\varepsilon u [L, \chi_\varepsilon] v }_{L^p(\R^d)} \le\norm{ \chi_\varepsilon u}_{L^\infty (\R^d)} \norm{ [L, \chi_\varepsilon] v }_{L^p(\R^d)} \le  \varepsilon^d \norm{ \chi }_{L^\infty (\R^d)} \norm{ u}_{L^\infty (\T^d)} o( \varepsilon^{(1-\frac 1 p)d})= o( \varepsilon^{(2-\frac 1 p)d}).$$
	The last term is the dominant term since by Proposition \ref{prop:independance},
	$$ \norm{ \chi_\varepsilon^2 [L,u]  v }_{L^p(\R^d)} =  \varepsilon^d \norm{ \tilde \chi_\varepsilon [L,u]  v }_{L^p(\R^d)}\overset{\varepsilon \to 0}{\sim}  \varepsilon^{(2-\frac 1 p)d} \norm{\chi^2}_{L^p(\R^d)} \norm{ [L,u]  v}_{L^p(\T^d)}.$$
	
\end{proof}

\begin{proof}[Proof of Lemma \ref{lem:transference}]
	For $(i)$, let $u \in \mathcal D(\T^d)$. We apply the estimate to $\chi_\varepsilon u \in  \mathcal D(\T^d)$:
	$$ \norm{L_0(\chi_\varepsilon u) }_{L^p(\R^d)}  \lesssim  \norm{L_{1} (\chi_\varepsilon u) }_{L^{p_1}(\R^d)}^{\theta}   \norm{L_{2} (\chi_\varepsilon u) }_{L^{ p_2}(\R^d)}^{1 - \theta} .$$ 
	Then Corollary \ref{cor:approxTransfer} yields 
	\begin{multline*}
		\varepsilon^{(1-\frac 1 p)d} \norm{L_0 u }_{L^p(\T^d)} 
		\lesssim  \varepsilon^{\theta (1-\frac 1 {p_1})d + (1 - \theta) (1-\frac 1 {p_2})d }\norm{L_{1} u }_{L^{p_1}(\T^d)}^{\theta}   \norm{L_{2} (\chi_\varepsilon u) }_{L^{ p_2}(\T^d)}^{1 - \theta}\\
		+ o( \varepsilon^{(1-\frac 1 p)d} + \varepsilon^{\theta (1-\frac 1 {p_1})d + (1 - \theta) (1-\frac 1 {p_2})d }) .
	\end{multline*} 
	Both powers on $\varepsilon$ are the same by assumption on $p, p_1, p_2, \theta$ so dividing by $ \varepsilon^{(1-\frac 1 p)d}$ and sending $\varepsilon$ to $0$ we get the estimate on $\T^d$.
	
	The proof of $(ii)$ is similar; once the original estimate on $\R^d$ has been applied to $\chi_\varepsilon u$ and $\chi_\varepsilon v$, one remarks that all the terms scale with the power of $\varepsilon$, yielding the estimate on $\T^d$ at the limit $\varepsilon \to 0$.

\end{proof}

As a conclusion of this subsection, we transfer the Gagliardo-Nirenberg inequality. 
\begin{Prop}  \label{prop:GN}
	Let $p, p_0, p_1 \in [2, \infty]$, $s, s_0, s_1 \ge 0$ and $\theta \in [0,1]$ such that $\frac 1 p = \frac \theta {p_1} +  \frac {1-\theta} {p_0} $ and $s= \theta p_1 + (1- \theta) p_0 $. For all $u \in \mathcal D (\T^d)$,
	$$ \norm{J^s u }_{L^p(\T^d)}  \lesssim  \norm{J^{s_1} u }_{L^{p_1}(\T^d)}^{\theta}   \norm{J^{s_0} u }_{L^{ p_0}(\T^d)}^{1 - \theta} .$$ 
\end{Prop}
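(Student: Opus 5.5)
Note first that the statement as written contains a typo: the regularity condition should read $s = \theta s_1 + (1-\theta) s_0$, not $s = \theta p_1 + (1-\theta) p_0$. The proof applies Lemma \ref{lem:transference}(i) with $L_0 = J^s$, $L_1 = J^{s_1}$ and $L_2 = J^{s_0}$, taking $p_1, p_2$ of that lemma to be $p_1, p_0$. So there are two things to check: that these Fourier multipliers satisfy the hypotheses of the lemma, and that the corresponding inequality holds on $\R^d$ for all $u \in \mathcal S(\R^d)$.

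\emph{Hypotheses on the multipliers.} Each symbol $\hat L_j(\xi) = (1+4\pi^2|\xi|^2)^{\sigma/2}$ with $\sigma \in \{s, s_0, s_1\}$, $\sigma \ge 0$, is smooth. In particular it is continuous at every point of $\Z^d$. It is also bounded by the polynomial $(1+4\pi^2|\xi|^2)^{\lceil \sigma \rceil}$.

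\emph{The inequality on $\R^d$.} We need
$$\norm{J^s u}_{L^p(\R^d)} \lesssim \norm{J^{s_1} u}_{L^{p_1}(\R^d)}^{\theta} \norm{J^{s_0} u}_{L^{p_0}(\R^d)}^{1-\theta}$$
for $u \in \mathcal S(\R^d)$, $s = \theta s_1 + (1-\theta)s_0$ and $\frac1p = \frac\theta{p_1} + \frac{1-\theta}{p_0}$. This is the inhomogeneous Gagliardo--Nirenberg inequality in Bessel potential spaces, i.e.\ the statement that complex interpolation gives $[H^{s_0,p_0}, H^{s_1,p_1}]_\theta = H^{s,p}$, combined with the standard bound $\norm{u}_{[X_0,X_1]_\theta} \le \norm{u}_{X_0}^{1-\theta}\norm{u}_{X_1}^\theta$. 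For finite exponents this is classical (Triebel; Bergh--Löfström), and I would quote it.

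To keep the argument self-contained, I would instead run a three-lines argument. Set $v = J^{s_0} u$ and consider the analytic family $F(z) = J^{z(s_1 - s_0)} v$ on the strip $0 \le \mathrm{Re}\, z \le 1$. Pair it against a suitably normalised analytic family of test functions in $L^{p'}$. On the boundary lines, the imaginary powers $J^{i\tau}$ are bounded on $L^q$ for $1 < q < \infty$ by the Mikhlin multiplier theorem, with a bound growing at most polynomially in $|\tau|$. Multiplying by the factor $e^{z^2}$ absorbs this growth. Hadamard's three-lines lemma then yields the inequality at $z = \theta$.

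\emph{Main obstacle: endpoint exponents.} The difficulty is the case where $p$, $p_0$ or $p_1$ equals $\infty$, because imaginary powers of $J$ are not bounded on $L^\infty$. I would first take all exponents finite. Since the statement is only used later with finite $p$, I would either restrict the proposition to $p, p_0, p_1 < \infty$ or handle infinite exponents by replacing them with the appropriate $BMO$/Hölder--Zygmund endpoint. Once the $\R^d$ inequality is in hand, Lemma \ref{lem:transference}(i) transfers it to all $u \in \mathcal D(\T^d)$.
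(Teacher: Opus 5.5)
Your structure is the paper's own. Its proof just quotes the Euclidean inequality (Lemma 3.1 of Brezis--Mironescu) and transfers it with Lemma \ref{lem:transference}(i). You are right that the hypothesis must read $s=\theta s_1+(1-\theta)s_0$. Your Stein-interpolation sketch is sound when $p_0,p_1<\infty$, and the transference step still works at $p=\infty$ (Hausdorff--Young with $p'=1$).

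The gap is the endpoint case, and neither of your proposed ways out closes it.

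\textbf{Restricting to finite exponents is not harmless.} In Lemma \ref{lem:calculGN} with $\varepsilon=0$ one has $\bar\theta+\theta=1$, which forces $p=\infty$. So the proposition is applied there with $s_0=0$, $p_0=\infty$: the low endpoint is $\norm{u}_{L^\infty}$, and only the target exponent $\bar q=r/\bar\theta$ is finite. This case feeds Lemma \ref{lem:comutLp}, Corollary \ref{dualiteHsalt}, and hence the $L^\infty$ blow-up criterion of Proposition \ref{redLinfty}.

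\textbf{Your other routes do not reach this case.} The three-lines argument fails because $J^{i\tau}$ is unbounded on $L^\infty$. The BMO remark is only a pointer: it needs $J^{i\tau}:L^\infty\to BMO$ together with the Fefferman--Stein identity $[L^{p_1},BMO]_\vartheta=L^p$. Even then it only gives a BMO norm on the left when $p=\infty$.

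\textbf{A short argument covers every case at once.} If $s_0=s_1$, the inequality is just H\"older. Otherwise, let $\Delta_j$ be Littlewood--Paley blocks and $M$ the Hardy--Littlewood maximal function. The kernel of $J^{s-s_i}\Delta_j$ is $2^{j(s-s_i)}$ times an $L^1$-normalised Schwartz bump at scale $2^{-j}$, uniformly in $j$. Hence
$$|J^s\Delta_j u(x)|\lesssim \min\bigl(2^{j(s-s_0)}MJ^{s_0}u(x),\,2^{j(s-s_1)}MJ^{s_1}u(x)\bigr).$$
Summing the two geometric series gives the pointwise bound $|J^s u(x)|\lesssim (MJ^{s_0}u(x))^{1-\theta}(MJ^{s_1}u(x))^{\theta}$. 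Then apply H\"older with exponents $p_0/(1-\theta)$ and $p_1/\theta$, and use that $M$ is bounded on $L^q$ for $1<q\le\infty$. This yields the $\R^d$ inequality for all $p_0,p_1\in[2,\infty]$, after which your transference step applies unchanged.
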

\begin{proof}
	Lemma 3.1 of \cite{brezis_gagliardo-nirenberg_2001} contains the version of this result on $\R^d$. Since the $J^s$ are Fourier multipliers satisfying the assumptions of Lemma \ref{lem:transference}, the estimate transfers to $\T^d$.
\end{proof}

\subsection{Commutator estimates}

We derive all of our commutator estimates from the following one, which is a transfer from $\R^d$ to $\T^d$ of Theorem 1.9 in \cite{li_kato-ponce_2016}.
\begin{Prop} \label{prop:liComut}
	Let $s\ge 0$ and $r, p,\bar p, q, \bar q \in [2, \infty]$ such that $\frac 1 p + \frac 1 {\bar p} \le \frac 1 r$ and $\frac 1 q + \frac 1 {\bar q}  \le \frac 1 r$. Then for any $u,v \in \mathcal D(\T^d)$, the following estimate holds:
	$$\norm{[J^{s}, u]v}_{L^{r}(\T^d)} \lesssim \norm{J^{s-1} \nabla u}_{L^{\bar p}(\T^d)} \norm{v}_{L^p(\T^d)} + \mathbf 1 _{s > 1} \norm{ \nabla u}_{L^{\bar q}(\T^d)} \norm{J^{s-2}\nabla v}_{L^{q}(\T^d)},$$
	where $ \mathbf 1 _{s > 1}$ indicates that the second term is present only if $s > 1$.
\end{Prop}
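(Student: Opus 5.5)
The plan is to deduce Proposition \ref{prop:liComut} from its $\R^d$ counterpart, Proposition \ref{prop:liComutR}, by means of the transference principle, Lemma \ref{lem:transference}~(ii). A preliminary reduction on exponents is needed first. The statement on $\T^d$ allows the inequalities $\frac 1 p + \frac 1 {\bar p} \le \frac 1 r$ and $\frac 1 q + \frac 1 {\bar q} \le \frac 1 r$, while the $\R^d$ result requires equalities. On the torus, which has measure one, Hölder's inequality gives $\norm{\cdot}_{L^{a}(\T^d)} \le \norm{\cdot}_{L^{b}(\T^d)}$ whenever $a \le b$. Given $(p,\bar p)$ with $\frac 1 p + \frac 1 {\bar p} \le \frac 1 r$, we therefore choose $\tilde p \le p$ and $\tilde{\bar p} \le \bar p$, both still in $[2,\infty]$, such that $\frac 1 {\tilde p} + \frac 1 {\tilde{\bar p}} = \frac 1 r$. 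This is possible because $r \ge 2$: we simply increase one of the reciprocals until the sum equals $\frac 1 r \le \frac 1 2$, and no reciprocal exceeds $\frac 12$. We do the same for $(q,\bar q)$. Once the torus estimate holds with the tilded exponents, the original one follows by monotonicity of the norms. Hence it suffices to treat the equality case.

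In the equality case, I would apply Lemma \ref{lem:transference}~(ii) with $L_0 = J^s$, $L_1 = J^{s-1}\nabla$ (componentwise), $L_2 = I$, $L_3 = \nabla$ and $L_4 = J^{s-2}\nabla$. The last two are used only when $s>1$; when $s \le 1$ one takes $L_3 = L_4 = 0$, or applies the lemma with the second term dropped. The symbols $(1+4\pi^2|\xi|^2)^{s/2}$, $2i\pi\xi_j(1+4\pi^2|\xi|^2)^{(s-1)/2}$, $1$, $2i\pi \xi_j$ and $2i\pi\xi_j(1+4\pi^2|\xi|^2)^{(s-2)/2}$ are all smooth, hence continuous at every point of $\Z^d$, and they are polynomially bounded. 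The vector-valued operators are handled by applying the lemma to each component $\partial_j$ and then summing, since $|\nabla u|$ is comparable to $\sum_j |\partial_j u|$. There is one minor point: the statement of Proposition \ref{prop:liComutR} is written for $u,v\in\mathcal S$, and what the transference lemma needs is exactly the $\R^d$ inequality for Schwartz functions. That is precisely Li's Theorem 1.9.

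The main obstacle is checking that the proof of Lemma \ref{lem:transference}~(ii) genuinely applies, because the commutator on the left and the right-hand side must scale with the same power of $\varepsilon$. By Corollary \ref{cor:approxTransfer}, the left-hand side behaves like $\varepsilon^{(2-\frac 1 r)d}\norm{\chi^2}_{L^r}\norm{[J^s,u]v}_{L^r(\T^d)}$. The right-hand side products behave like $\varepsilon^{(1-\frac 1 {\bar p})d + (1-\frac 1 p)d}$ times the torus norms, and the exponent equals $(2-\frac 1 r)d$ exactly because $\frac 1 p+\frac 1 {\bar p} = \frac 1 r$. This is why the reduction to the equality case is made first. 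Dividing by this power and letting $\varepsilon\to 0$ yields the estimate on $\T^d$, with a constant that depends on $\chi$ only through the universal ratios of $\norm{\chi}_{L^a}$ norms. I would also note the case $p = \infty$: Proposition \ref{prop:independance} is stated for $p \ge 1$ finite, but for $p=\infty$ the asymptotic $\norm{\chi_\varepsilon u}_{L^\infty} \sim \varepsilon^d \norm{\chi}_{L^\infty}\norm{u}_{L^\infty}$ is immediate because $\chi(0)=1$ is the maximum of $\chi$. The same goes for the commutator remainder, which the Hausdorff–Young argument covers with $p'=1$.
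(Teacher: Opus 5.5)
Your proposal is correct and follows the paper's proof. It reduces to the equality case $\frac 1 p+\frac 1 {\bar p}=\frac 1 q+\frac 1 {\bar q}=\frac 1 r$ using the monotonicity of $L^a(\T^d)$ norms. It then transfers Proposition \ref{prop:liComutR} to $\T^d$ through Lemma \ref{lem:transference}~(ii), with $L_0=J^s$ and the multipliers $J^{s-1}\nabla$, $I$, $\nabla$, $J^{s-2}\nabla$. Your scaling check and your remarks on $p=\infty$ only make explicit what the paper leaves implicit.

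One small correction: you cannot handle the vector-valued multipliers by applying the lemma to each $\partial_j$ separately, since a single partial derivative does not control the commutator on $\R^d$. Instead, rewrite the right-hand side as a finite sum of $d+d^2$ scalar products $\norm{Lu}\norm{L'v}$. The proof of Lemma \ref{lem:transference}~(ii) extends verbatim to such a sum.
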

\begin{Rmk}
	Although the lemma is stated for smooth functions, by a density argument it can be extended to any functions $u,v : \T^d \rightarrow \R$ such that the above norms make sense.
\end{Rmk}
\begin{proof}
	Note that since $L^a(\T^d) \hookrightarrow L^b(\T^d)$ when $a \ge b$, it is sufficient to prove the estimate in the case $\frac 1 p + \frac 1 {\bar p}  =\frac 1 q + \frac 1 {\bar q}  = \frac 1 r$.
	By Proposition \ref{prop:liComutR}, this estimate holds in this case when the domain is replaced by $\R^d $ and $u,v \in \mathcal S(\R^d )$. Since $J^s$ and $\nabla$ are polynomial and continuous Fourier multipliers, the transference principle in Lemma \ref{lem:transference} applies and the estimate holds on $\T^d$.
\end{proof}

\begin{Cor} \label{cor:kato}
	Let $s> d/ 2$.  Then for all $u\in H^{s+1}(\T^d)$ and $v\in H^{s}(\T^d)$,
	$$ \norm{[J^{s+1} , u]v}_{L^2} \lesssim  \norm{\nabla u}_{\hsp}\norm{v}_{\hs}.$$ 
\end{Cor}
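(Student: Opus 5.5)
We apply Proposition \ref{prop:liComut} with exponent $s+1$ in place of $s$, and $r=2$. Since $s+1>1$, both terms on the right-hand side are present:
$$\norm{[J^{s+1}, u]v}_{L^{2}} \lesssim \norm{J^{s} \nabla u}_{L^{\bar p}} \norm{v}_{L^p} + \norm{ \nabla u}_{L^{\bar q}} \norm{J^{s-1}\nabla v}_{L^{q}}.$$
The plan is to choose the exponents so that each factor is controlled by an $L^2$-based Sobolev norm. The constraints from Proposition \ref{prop:liComut} are $\frac 1 p + \frac 1 {\bar p}\le \frac 12$ and $\frac 1 q + \frac 1 {\bar q}\le\frac12$.

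\textbf{First term.} Take $\bar p = 2$ and $p=\infty$. Then
$$\norm{J^s\nabla u}_{L^2}\le \norm{\nabla u}_{\hs}\le\norm{\nabla u}_{\hsp},$$
and the Sobolev embedding $H^s(\T^d)\hookrightarrow L^\infty(\T^d)$, valid since $s>d/2$, gives $\norm{v}_{L^\infty}\lesssim\norm{v}_{\hs}$.

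\textbf{Second term.} Take $\bar q=\infty$ and $q=2$. Then
$$\norm{J^{s-1}\nabla v}_{L^2}\lesssim \norm{v}_{\hs},$$
because the symbol of $J^{s-1}\nabla$ is bounded by that of $J^s$. Moreover,
$$\norm{\nabla u}_{L^\infty}\lesssim \norm{\nabla u}_{\hs}\le\norm{\nabla u}_{\hsp},$$
again by Sobolev embedding.

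Combining the two terms yields the claim. Note that the claim as stated is weaker than what we obtain: we get $\norm{\nabla u}_{\hs}$, which the argument shows is enough.

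\textbf{Main obstacle.} The only delicate point is justifying the endpoint exponents $p=\infty$ and $\bar q=\infty$ in Proposition \ref{prop:liComut}, since its statement allows $[2,\infty]$. The proposition is only stated for $u,v\in\mathcal D(\T^d)$, so we also need a density argument: following the Remark after it, approximate $u\in H^{s+1}$ and $v\in H^s$ by smooth functions. Both sides are continuous in these topologies. On the left, $[J^{s+1},u]v = J^{s+1}(uv)-uJ^{s+1}v$, and each piece converges in a negative Sobolev space by the algebra property of $H^s$. The estimate then holds with limit $L^2$ bound by lower semicontinuity of the norm. The rest is direct.
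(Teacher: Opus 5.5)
Your proof is correct and is exactly the paper's argument: apply Proposition \ref{prop:liComut} with $s+1$ in place of $s$, $r=\bar p=q=2$ and $p=\bar q=\infty$, then conclude with the embedding $H^s(\T^d)\hookrightarrow L^\infty(\T^d)$. As you noticed, this actually gives the stronger bound with $\norm{\nabla u}_{\hs}$, which is the form the paper uses in the proof of Corollary \ref{cor:duaHsFinal}, and your density step simply spells out what the paper leaves to the Remark following Proposition \ref{prop:liComut}.
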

\begin{proof}
	Since by Sobolev embedding $\norm{\cdot}_{L^\infty(\T^d)} \lesssim \norm{\cdot}_{H^s(\T^d)} =  \norm{J^s \cdot}_{L^2(\T^d)}$, the wanted estimate is a consequence of
	$$\norm{[J^{s+1}, u]v}_{L^{2}(\T^d)} \lesssim \norm{J^{s} \nabla u}_{L^{2}(\T^d)} \norm{v}_{L^\infty(\T^d)} + \norm{ \nabla u}_{L^{\infty}(\T^d)} \norm{J^{s-1}\nabla v}_{L^2(\T^d)}.$$
	This last estimate holds by Proposition \ref{prop:liComut}.
\end{proof}

\begin{Lemma} \label{lem:comutLp}
	Let $s> \frac d 2$ and $\varepsilon \ge 0$. Then there exists $p \in [2, \infty]$ such that for all $\delta >0$ there exists an increasing function $P_\delta: \R^2 \rightarrow \R_+$ such that for all $u\in H^{s+\varepsilon}(\T^d)$ and $v\in H^{s}(\T^d)$,
	$$ \norm{[J^{s}, u]v}_{L^{2}} \lesssim P_\delta(\norm{ u}_{L^p}, \norm{ v}_{L^p}) (1+\norm{u}_{H^{s +\varepsilon}}) + \delta \norm{ v}_{H^{s}}.$$
	Moreover, if $\varepsilon > 0$ then $p$ can be chosen finite and $P_\delta$ is polynomial.
\end{Lemma}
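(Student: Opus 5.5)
We start from the Kato--Ponce-type estimate of Proposition \ref{prop:liComut}, applied with exponent $s$ and $r=2$, and then use Gagliardo--Nirenberg interpolation (Proposition \ref{prop:GN}) to trade high Sobolev norms for $L^p$ norms. This gives
$$\norm{[J^{s}, u]v}_{L^{2}} \lesssim \norm{J^{s-1}\nabla u}_{L^{\bar p}}\norm{v}_{L^{p_1}} + \norm{\nabla u}_{L^{\bar q}}\norm{J^{s-2}\nabla v}_{L^{q}},$$
with $\frac1{p_1}+\frac1{\bar p}=\frac1q+\frac1{\bar q}=\frac12$. Since $s>d/2>0$, we may take $s>1$ in general; if $s\le 1$ the second term is simply absent. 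The exponents will be chosen so that every factor carrying derivatives is interpolated between a high Sobolev norm and a low $L^p$ norm.

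\textbf{First term.} Using $\norm{J^{s-1}\nabla u}_{L^{\bar p}}\lesssim\norm{J^{s}u}_{L^{\bar p}}$, we choose $\bar p$ slightly above $2$ and $p_1$ large but finite. Gagliardo--Nirenberg then gives
$$\norm{J^s u}_{L^{\bar p}} \lesssim \norm{J^{s+\varepsilon}u}_{L^2}^{\theta}\norm{u}_{L^{p_0}}^{1-\theta},$$
with $\theta<1$ when $\varepsilon>0$. The price of going from $L^2$ to $L^{\bar p}$ is $d(\frac12-\frac1{\bar p})$ derivatives, and this must be compensated by the gain coming from $\varepsilon$ together with the low-regularity endpoint; this is possible for $\bar p$ close to $2$ and $p_0$ large. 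Since $\theta\le1$, we get $\norm{u}_{H^{s+\varepsilon}}^\theta\le 1+\norm{u}_{H^{s+\varepsilon}}$, so the first term is bounded by a polynomial in $\norm{u}_{L^{p_0}},\norm{v}_{L^{p_1}}$ times $(1+\norm{u}_{H^{s+\varepsilon}})$. When $\varepsilon=0$ we instead take $\bar p=2$ and $p_1=\infty$, and bound $\norm{v}_{L^\infty}$ by interpolation, $\norm{v}_{L^\infty}\lesssim\norm{v}_{H^s}^{\beta}\norm{v}_{L^2}^{1-\beta}$ with $\beta<1$ because $s>d/2$. Young's inequality then splits off $\delta\norm{v}_{H^s}$, but leaves a product $\norm{u}_{H^s}\cdot(\dots)$ to which Young cannot be applied cleanly. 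This is where $p=\infty$ becomes necessary: we keep $\norm{v}_{L^\infty}$ as an argument of $P_\delta$, which the statement allows for $\varepsilon=0$. With that choice the $\varepsilon=0$ case is immediate, via $\norm{J^s u}_{L^2}\norm{v}_{L^\infty}$.

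\textbf{Second term.} We interpolate both factors. First, $\norm{\nabla u}_{L^{\bar q}}\lesssim \norm{u}_{H^{s+\varepsilon}}^{a}\norm{u}_{L^{p_0}}^{1-a}$ with $a$ small, since only one derivative is taken and $s>d/2$ leaves room. Second, $\norm{J^{s-1}v}_{L^{q}}\lesssim \norm{v}_{H^s}^{b}\norm{v}_{L^{p_0}}^{1-b}$ with $b<1$, because we lose one derivative and choose $q$ close to $2$. Applying Young's inequality with exponents $1/b$ and $1/(1-b)$ gives
$$\delta\norm{v}_{H^s} + C_\delta\left(\norm{u}_{H^{s+\varepsilon}}^{a}\norm{u}_{L^{p_0}}^{1-a}\right)^{1/(1-b)}\norm{v}_{L^{p_0}}.$$
To conclude we need $a/(1-b)\le1$, so that this power of $\norm{u}_{H^{s+\varepsilon}}$ is at most $1+\norm{u}_{H^{s+\varepsilon}}$.

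\textbf{Main obstacle.} The hardest step is this exponent bookkeeping. We need $\frac1q+\frac1{\bar q}=\frac12$ with $q$ near $2$, hence $\bar q$ large, and the resulting scaling relations must satisfy $a+b\le1$. We would first try the scaling-critical choice: $\nabla u$ in $L^{\bar q}$ costs $1+d/2-d/\bar q$ derivatives out of $s+\varepsilon+d/2-d/p_0$, while $J^{s-1}v$ in $L^q$ costs $s-1+d/2-d/q$ out of $s+d/2-d/p_0$. Letting $p_0\to\infty$ and summing the two ratios, $a+b$ falls below $1$ precisely because the total derivative count $s$ is spread over factors whose high norms are $s$ and $s+\varepsilon$, with strict slack coming from $s>d/2$. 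Finally, we take $p$ to be the maximum of all the finite $L^p$ exponents used.
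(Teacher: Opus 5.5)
\textbf{Gap in the second term's exponent choice.} Your architecture is the paper's:
\begin{itemize}
\item Li's commutator estimate (Proposition \ref{prop:liComut}) with $r=2$.
\item The first term bounded while keeping $\norm{v}_{L^p}$ as an argument. The paper simply applies $H^\varepsilon(\mathbb{T}^d)\hookrightarrow L^{\bar p}(\mathbb{T}^d)$ to $J^{s-1}\nabla u$; your Gagliardo--Nirenberg variant also works, and your $\varepsilon=0$ treatment with $p=\infty$ is exactly the paper's.
\item The second term handled by Gagliardo--Nirenberg on both factors, then Young's inequality under the condition $a/(1-b)\le 1$.
\end{itemize}
The gap is the exponent choice in that second term, which is precisely the step you single out. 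Taking $q$ close to $2$ and $\bar q$ large cannot work, for two reasons.
\begin{itemize}
\item Testing with $v=\sin(2\pi N x_1)$ shows that any bound $\norm{J^{s-1}v}_{L^q}\lesssim \norm{v}_{H^s}^{b}\norm{v}_{L^{p_0}}^{1-b}$ forces $b\ge (s-1)/s$, whatever $q$ and $p_0$.
\item Testing $\norm{\nabla u}_{L^{\bar q}}\lesssim\norm{u}_{H^{s+\varepsilon}}^{a}\norm{u}_{L^{p_0}}^{1-a}$ with a concentrated bump $u=\phi(Nx)$ forces $a\,(s+\varepsilon-\frac d2+\frac d{p_0})\ge 1-\frac d{\bar q}+\frac d{p_0}$. (The ratios you wrote are not these Gagliardo--Nirenberg exponents.) So $a$ is at least about $1/(s+\varepsilon-\frac d2)$ once $\bar q$ and $p_0$ are large.
\end{itemize}
Hence $a+b>1$ whenever $\varepsilon<d/2$. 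For $d=2$, $s=3/2$ and small $\varepsilon$ you would even need $a\approx 2$: indeed $H^{3/2+\varepsilon}(\mathbb{T}^2)$ only places $\nabla u$ in $L^{4/(1-2\varepsilon)}$. So ``$a$ small'' is false. The slack also does not come from $s>d/2$: at $\varepsilon=0$, $p_0=\infty$ the scaling-critical exponents sum to exactly $1$ for every splitting of $\frac12$, so the only slack is $\varepsilon$. Together with the admissibility constraints $a\ge 1/(s+\varepsilon)$ and $b\ge (s-1)/s$, this leaves room only near $\bar q\approx 2(s+\varepsilon)$ and $q\approx 2s/(s-1)$. That is far from your choice unless $s$ is large.

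\textbf{How to fix it.} The fix is the paper's Lemma \ref{lem:calculGN}: use the diagonal, embedding-free interpolation of Proposition \ref{prop:GN}. Take $a=\frac1{s+\varepsilon}$, $b=\frac{s-1}{s}$, $\frac1{\bar q}=\frac a2+\frac{1-a}{p}$ and $\frac1q=\frac b2+\frac{1-b}{p}$. This gives $\norm{\nabla u}_{L^{\bar q}}\lesssim \norm{u}_{L^p}^{1-a}\norm{u}_{H^{s+\varepsilon}}^{a}$ and $\norm{J^{s-2}\nabla v}_{L^q}\lesssim\norm{v}_{L^p}^{1-b}\norm{v}_{H^s}^{b}$. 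Young's inequality then splits off $\delta\norm{v}_{H^s}$ and leaves $\norm{u}_{H^{s+\varepsilon}}$ to the power $a/(1-b)=s/(s+\varepsilon)\le 1$. The Hölder condition $\frac1{\bar q}+\frac1q=\frac{a+b}2+\frac{2-a-b}{p}\le\frac12$ holds as soon as $p\ge 2(2-a-b)/(1-a-b)$. This is finite exactly when $\varepsilon>0$; for $\varepsilon=0$ one has $a+b=1$ and must take $p=\infty$, as the statement allows. This uses the inequality $\frac1q+\frac1{\bar q}\le\frac12$ that Proposition \ref{prop:liComut} permits on $\mathbb{T}^d$, so you need neither equality nor $q$ near $2$.
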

\begin{proof}
	Let $p > \max(d/ \varepsilon, 2)$ ($p= \infty$ if $\varepsilon=0$), $\bar p$ such that $ \frac 1 p + \frac 1 {\bar p}  =  \frac 1 2$ and $ \bar q, q$ to be set later such that $ \frac 1 q + \frac 1 {\bar q}  \le  \frac 1 2$. By Proposition \ref{prop:liComut},
	\begin{equation} \label{eq:liComut}
		\norm{[J^{s}, u]v}_{L^{2}} \lesssim \norm{J^{s-1} \nabla u}_{L^{\bar p}} \norm{v}_{L^p} + \norm{ \nabla u}_{L^{\bar q}} \norm{J^{s-2}\nabla v}_{L^{q}}
	\end{equation}
	Since $p > \max(d/ \varepsilon, 2)$, we have $\frac 1 2 - \frac \varepsilon d \le \frac 1 {\bar p}$ and $\bar p < \infty$ so the Sobolev embedding $H^\varepsilon(\T^d) \hookrightarrow L^{\bar p}(\T^d)$ holds, as well as the following bound on the first term of the right-hand side of \eqref{eq:liComut}:
	$$  \norm{J^{s-1} \nabla u}_{L^{\bar p}} \norm{v}_{L^p} \lesssim \norm{J^{s -1} \nabla u}_{\varepsilon} \norm{v}_{L^p} \lesssim \norm{u}_{H^{s +\varepsilon}} \norm{v}_{L^p}. $$
	For the other term, we assume that at the beginning of the proof we chose the $\bar q, q$ given by Lemma \ref{lem:calculGN} and $p$ large enough to have
	$$	\norm{ \nabla u}_{L^{\bar q}}  \norm{J^{s-2}\nabla v}_{L^{q}}\le P_\delta(  \norm{ u  }_{L^{p}},\norm{ v  }_{L^{p}} )(1 +   \norm{ u  }_{H^{s+\varepsilon}}) + \delta  \norm{ v  }_{H^{s}} ,  $$ 
	where $P_\delta$ is an increasing function (polynomial if $\varepsilon>0$) depending only on $\delta$.
	Plugging this and the previous estimate and in \eqref{eq:liComut} we get the result.
\end{proof}

\subsection{Composition in Sobolev spaces} \label{sec:compo} 

We need a variant of the following result, which is contained in a more general form in \cite{bahouri_fourier_2011}, Theorem 2.89 and Corollary 2.91:

\begin{Prop}\label{prop:bahouri} Let $s> d/ 2$ and $F: \R^n \rightarrow \R^m $ be a smooth function. Then for all functions $U,V \in H^s(\T^d)$, $F \circ U, F \circ v \in H ^s(\T^d)^m$ and
	$$ \norm{F \circ U - F \circ V }_{\hs} \lesssim_{F, \norm{U}_{L^\infty}, \norm{V}_{L^\infty}} \norm{U - V}_{\hs} . $$
	In particular, in the case $V=0$,
	$$ \norm{F \circ U }_{\hs} \lesssim_{F, \norm{U}_{L^\infty}}1 + \norm{U}_{\hs} . $$ 
	
\end{Prop}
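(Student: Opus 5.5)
The estimate follows from the classical Moser-type composition estimate on $\T^d$ together with a telescoping/fundamental-theorem-of-calculus argument for the difference. In the paper the proposition is quoted from \cite{bahouri_fourier_2011}, so what follows explains how one would transfer or reprove it on the torus.

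\emph{Step 1: the estimate for $V=0$.} Write $F\circ U = F(0) + G(U)$ with $G = F - F(0)$, so that $G(0)=0$. On the torus the constant $F(0)$ has $H^s$ norm $|F(0)|$, which accounts for the ``$1+$'' in the bound. For $G(U)$ I would use the paraproduct/Meyer linearization $G(U)=\sum_q m_q\,\Delta_q U$, where $m_q=\int_0^1 G'(S_qU+\tau\Delta_qU)\,\dd\tau$. Since $\|S_q U\|_{L^\infty}\lesssim \|U\|_{L^\infty}$, the derivatives of $m_q$ obey $\|\partial^\alpha m_q\|_{L^\infty}\lesssim_{F,\|U\|_{L^\infty}} 2^{q|\alpha|}$. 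Working on dyadic blocks and taking $s>0$, this yields
$$\|G(U)\|_{H^s}\lesssim_{F,\|U\|_{L^\infty}}\|U\|_{H^s}.$$
Littlewood--Paley theory on $\T^d$ works verbatim using Fourier series. Alternatively, the $\R^d$ statement can be transferred by the localisation $\chi_\varepsilon$ of Appendix~\ref{sec:transfer}: one applies it to $\chi_\varepsilon U$ and uses that $\chi_\varepsilon F(U) - F(\chi_\varepsilon U)$ is negligible at the relevant scale. The former route is cleaner.

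\emph{Step 2: the difference.} Write
$$F(U)-F(V)=H(U,V)\,(U-V),\qquad H(U,V)=\int_0^1 \nabla F\big(V+\tau(U-V)\big)\,\dd\tau,$$
a smooth function of the pair $(U,V)$. Since $s>d/2$, $H^s(\T^d)$ is an algebra, so
$$\|F(U)-F(V)\|_{H^s}\lesssim \|H(U,V)\|_{H^s}\,\|U-V\|_{H^s}.$$
Step 1, applied to the smooth map $(X,Y)\mapsto\int_0^1\nabla F(Y+\tau(X-Y))\,\dd\tau$, gives
$$\|H(U,V)\|_{H^s}\lesssim_{F,\|U\|_{L^\infty},\|V\|_{L^\infty}} 1+\|U\|_{H^s}+\|V\|_{H^s}.$$

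\emph{The main obstacle.} The bound just obtained depends on $\|U\|_{H^s}$ and $\|V\|_{H^s}$, whereas the statement only allows $L^\infty$ norms in the constant. Reading the proposition literally, with only $L^\infty$ dependence, would require a finer tame argument. Following Corollary 2.91 of \cite{bahouri_fourier_2011}, one uses the product estimate
$$\|H\,W\|_{H^s}\lesssim \|H\|_{L^\infty}\|W\|_{H^s}+\|H\|_{H^s}\|W\|_{L^\infty}.$$
The factor $\|H\|_{H^s}$ in the second term still grows with $\|U\|_{H^s}$. So the dependence on $\|U\|_{H^s}$ and $\|V\|_{H^s}$ cannot really be avoided for the difference, and I expect the intended meaning to allow it. This is consistent with how the proposition is used in the paper, where constants always depend on $E_T^s$ norms, and in the case $V=0$ the $L^\infty$-only statement is exactly Step 1. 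I therefore expect the difficulty to lie entirely in setting up Step 1's Littlewood--Paley bounds on $\T^d$; the remainder is routine algebra-property bookkeeping.
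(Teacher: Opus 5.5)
Your route is the one behind the paper's citation. The paper gives no argument of its own and refers to \cite{bahouri_fourier_2011}. There the $V=0$ bound comes from Meyer's paralinearization exactly as in your Step 1, and the difference bound from the Taylor formula $F(U)-F(V)=H(U,V)\,(U-V)$ combined with the tame product law; periodic Littlewood--Paley theory carries this over to $\T^d$. Your diagnosis of the statement is also correct and can be made definitive. Take $n=m=1$, $F(x)=x^2$, $U_N(x)=\sin(2\pi N x_1)$ and $V_N=U_N+1$. Then $\norm{U_N}_{L^\infty}=1$, $\norm{V_N}_{L^\infty}=2$ and $\norm{U_N-V_N}_{\hs}=1$, while $\norm{F(U_N)-F(V_N)}_{\hs}=\norm{2U_N+1}_{\hs}\to\infty$. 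So the difference estimate genuinely needs the $H^s$ norms of $U$ and $V$, and only the case $V=0$ (your Step 1) holds as stated.

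Your closing remark needs one correction. The version to retain is the tame one you mention in passing,
\[
\norm{F(U)-F(V)}_{H^\sigma}\lesssim_{F,\norm{U}_{L^\infty},\norm{V}_{L^\infty}}\norm{U-V}_{H^\sigma}+\norm{U-V}_{L^\infty}\bigl(1+\norm{U}_{H^\sigma}+\norm{V}_{H^\sigma}\bigr),
\]
and not the non-tame bound $(1+\norm{U}_{H^\sigma}+\norm{V}_{H^\sigma})\norm{U-V}_{H^\sigma}$ of your Step 2, which is not enough for the paper. In Proposition \ref{prop:triangStab} the estimate is used at $\sigma=s+1$, then squared and integrated in time. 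Your Step 2 bound leads to $\int_0^T(1+\norm{U(t)}_{\hsp}+\norm{V(t)}_{\hsp})^2\norm{U(t)-V(t)}_{\hsp}^2\,\dd t$. This is a product of two functions that are only square-integrable in time, and no $E_T^s$ norm controls it. The tame bound instead gives $\norm{U-V}_{Y_T^{s+1}}^2+\norm{U-V}_{X_T^s}^2(T+\norm{U}_{Y_T^{s+1}}^2+\norm{V}_{Y_T^{s+1}}^2)$, with constants depending on $E_T^s$ rather than merely $X_T^s$ norms, which is what the final statement of that proposition allows. Your non-tame bound does suffice where only $X_T^s$-dependent constants are needed, e.g.\ for $R(U)-R(V)$ at level $s$ in the fixed point argument.

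Finally, discard the $\chi_\varepsilon$ aside. Transference works for linear and bilinear multiplier estimates, but composition is nonlinear. Since $\chi_\varepsilon\to 0$ uniformly, $F(\chi_\varepsilon U)$ only sees the linearization of $F$ at $0$ and is not comparable to $\chi_\varepsilon F(U)$.
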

In the case where the $\lceil s \rceil$ first derivatives of $F$ are bounded by a polynomial, we will relax the exigence of an $L^\infty$ norm behind $\lesssim$ and only ask for an $L^p$ norm with $p<\infty$ at the price of needing more regularity on $U$. Before stating our result, let us define the Bessel potential spaces $H ^{s,p}(\T ^d)$. For $s \in \R$ and $p \ge 1$, a distribution $u \in \mathcal D'(\T ^d)$ lies in $H ^{s,p}(\T ^d)$ if and only if $J^s u \in L^{p}(\T ^d)$ and in this case we set 
$$ \norm{u }_{H ^{s,p}} = \norm{J^s u }_{L ^p}.$$

\begin{Prop} \label{prop:wekaerComp}
	Let $n\in \mathbb N^*$, $s \ge 0$, $r \in [2, \infty)$, $\varepsilon>0$ , and $f\in  \mathcal C^{\infty}(\R^n; \R) $ such that there exists constants $C,\alpha>0$ satisfying $|\nabla^{\lceil s \rceil}f(X)| \le C(1 + |X|^\alpha)$. Then, there exists $p<\infty$ and a polynomial function $P$ such that for all $U \in H^{s + \varepsilon,r}(\T^d) $,
	$$\norm{f(U)}_{H ^{s,r}}  \le P(\norm U _{L ^p} ) (1 + \norm U _ {H ^{s+ \varepsilon, r }} ). $$
\end{Prop}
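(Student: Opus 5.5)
The plan is to run the classical Moser-type argument for composition estimates, with every $L^\infty$ factor replaced by an $L^p$ norm through Hölder's inequality and the Gagliardo--Nirenberg inequality on $\T^d$ (Proposition \ref{prop:GN}). The $\varepsilon$ gain of regularity is what absorbs the loss coming from interpolation. Write $m=\lceil s\rceil$.

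\emph{Step 1: integer case.} Suppose first that $s=m$ is an integer. By the Fa\`a di Bruno formula, $\nabla^m f(U)$ is a finite sum of terms
$$(\nabla^j f)(U)\,\nabla^{k_1}U\cdots\nabla^{k_j}U, \qquad k_1+\dots+k_j=m,\ k_i\ge1,\ 1\le j\le m.$$
Since $|\nabla^m f(X)|\lesssim 1+|X|^\alpha$, integrating from the origin gives $|\nabla^j f(X)|\lesssim 1+|X|^{\alpha+m}$ for every $j\le m$. I would estimate each term by Hölder:
$$\|(\nabla^jf)(U)\|_{L^{a_0}}\prod_i\|\nabla^{k_i}U\|_{L^{a_i}}, \qquad \frac1r=\frac1{a_0}+\sum_i\frac1{a_i}.$$
The first factor is at most $1+\|U\|_{L^{a_0(\alpha+m)}}^{\alpha+m}$, which is polynomial in an $L^p$ norm. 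For the other factors I would use Proposition \ref{prop:GN} between $L^{q}$ with $q$ large and $H^{m+\varepsilon,r}$:
$$\|\nabla^{k_i}U\|_{L^{a_i}}\lesssim \|U\|_{L^q}^{1-\theta_i}\|U\|_{H^{m+\varepsilon,r}}^{\theta_i}, \qquad \theta_i=\frac{k_i}{m+\varepsilon},\qquad \frac1{a_i}=\frac{\theta_i}{r}+\frac{1-\theta_i}{q}.$$
The exponents sum to $\sum_i\theta_i=m/(m+\varepsilon)<1$. This leaves room to take $q$ finite (Proposition \ref{prop:GN} is stated for exponents $\ge2$; I may need a slight adaptation in which the low-order endpoint is measured in $L^q$ rather than a Bessel norm) and to choose $a_0<\infty$ so that the Hölder relation holds exactly. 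Since the total power of $\|U\|_{H^{m+\varepsilon,r}}$ is at most one, Young's inequality then yields $P(\|U\|_{L^p})(1+\|U\|_{H^{m+\varepsilon,r}})$ with $p=\max(q,a_0(\alpha+m))$. The zeroth-order part $\|f(U)\|_{L^r}$ is handled trivially by growth. Finally, $\|f(U)\|_{H^{m,r}}\simeq\|f(U)\|_{L^r}+\|\nabla^mf(U)\|_{L^r}$ for $1<r<\infty$ on $\T^d$ by Mikhlin multipliers.

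\emph{Step 2: non-integer $s$.} For non-integer $s$, one option is to interpolate the map $U\mapsto f(U)$, but it is nonlinear, so I would avoid that. Instead I would reduce to the integer case. If $s<1$, I use a fractional chain rule / difference-quotient characterization of $H^{s,r}$ (Besov-type), bounding $|f(U(x+h))-f(U(x))|\le (1+|U(x)|+|U(x+h)|)^{\alpha'}|U(x+h)-U(x)|$ and then applying Hölder. Since $H^{s+\varepsilon,r}\hookrightarrow B^{s+\varepsilon/2}_{r,r}$-type spaces, the $\varepsilon$ again pays for the loss in the Lebesgue exponent. If $s>1$, I write $\|f(U)\|_{H^{s,r}}\simeq\|f(U)\|_{L^r}+\|\nabla f(U)\|_{H^{s-1,r}}$ with $\nabla f(U)=(\nabla f)(U)\nabla U$. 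I then apply a fractional Leibniz (Kato--Ponce) estimate, transferred to $\T^d$ via Lemma \ref{lem:transference}, and induct on $m$ for the factor $(\nabla f)(U)$, whose $(m-1)$-th derivatives inherit the polynomial bound. Each resulting product is closed by Proposition \ref{prop:GN} exactly as in Step 1, with a slightly smaller $\varepsilon$ at each induction stage.

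\emph{Main obstacle.} I expect the hardest part to be the exponent bookkeeping: at every stage the total power of the top norm $\|U\|_{H^{s+\varepsilon,r}}$ must be at most one while every other factor lands in some finite $L^p$. The margin comes only from $\varepsilon$, and it must survive the losses in the induction and in the fractional Leibniz rule. I will first check that $\sum\theta_i=s/(s+\varepsilon')<1$ survives with $\varepsilon'=\varepsilon/2$ at each stage. If the fractional case proves delicate, the fallback is to prove the estimate for $s'=m$ with $H^{m+\varepsilon,r}$ replaced appropriately, and to deduce the fractional $s$ case by complex interpolation of the \emph{linear} operators $V\mapsto$ (Fa\`a di Bruno multilinear terms) with $U$ frozen in the coefficients.
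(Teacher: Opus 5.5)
Your proposal is correct, and your Step 2 is essentially the paper's proof. For $0<s<1$ the paper also uses the Slobodeckij difference-quotient norm, the mean value theorem, Hölder with some $\tilde r>r$, and an embedding that spends part of $\varepsilon$. For $s\ge 1$ it inducts $s\to s-1$ via the chain rule and the splitting $J^{s-1}(\nabla f(U)\nabla U)=\nabla f(U)J^{s-1}\nabla U+[J^{s-1},\nabla f(U)]\nabla U$ with Li's commutator estimate, which has the same effect as your fractional Leibniz rule. It then applies the induction hypothesis to $\nabla f$ and closes with Gagliardo--Nirenberg plus Young, so that the top norm appears with total power at most one.

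Your Fa\`a di Bruno treatment of integer $s$ is a valid, more elementary extra route that the paper does not need, since its induction covers all $s\ge 1$. Also, Proposition \ref{prop:GN} already allows $s_0=0$, so its low endpoint is a plain $L^q$ norm and no adaptation is required.
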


The following piece of calculation will be used multiple times:
\begin{Lemma} \label{lem:calculGN}
	Let $s , t, \varepsilon \ge 0$ and $r \in [2, \infty) $. Then there exists $q, \bar q, p \in [2, \infty]$ such that $\frac 1 {\bar q} + \frac 1 q \le \frac 1 r$ and for all $\delta >0$ there exists a polynomial function $P_\delta: \R \rightarrow \R_+$ such that for all $u \in H^{s+t+\varepsilon,r }(\T ^d) \cap L^{p}(\T ^d) $ and $v \in H^{s+t, r}(\T ^d) \cap L^{p}(\T ^d)$,
	$$ \norm{ u  }_{H^{s, \bar q}} \norm{ v  }_{H^{t,q}}\le P_\delta(  \norm{ u  }_{L^{p}},\norm{ v  }_{L^{p}} )(1 +   \norm{ u  }_{H^{s+t+\varepsilon, r}}) + \delta  \norm{ v  }_{H^{s+t, r }} .  $$ 
	If $\varepsilon > 0$, $p$ can be chosen finite and $P_\delta$ is polynomial. In particular, for $r=2$,
	$$	\norm{ \nabla u}_{L^{\bar q}}  \norm{J^{s-2}\nabla v}_{L^{q}}\le P_\delta(  \norm{ u  }_{L^{p}},\norm{ v  }_{L^{p}} )(1 +   \norm{ u  }_{H^{s+\varepsilon}}) + \delta  \norm{ v  }_{H^{s}} ,  $$ 
	with $ p < \infty$ and $P_\delta$ polynomial if $\varepsilon > 0$.
\end{Lemma}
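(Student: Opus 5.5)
The statement to prove is Lemma \ref{lem:calculGN}. The plan is a Gagliardo--Nirenberg interpolation of each factor between $L^p$ and the top Bessel norm, followed by Young's inequality.

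\textbf{Step 1: interpolate each factor.} By Proposition \ref{prop:GN} (applicable since $r,p\ge 2$), for $\theta_1=\frac{s}{s+t+\varepsilon}$ and $\theta_2=\frac{t}{s+t}$,
\begin{equation*}
\norm{u}_{H^{s,\bar q}}\lesssim \norm{u}_{H^{s+t+\varepsilon,r}}^{\theta_1}\norm{u}_{L^p}^{1-\theta_1},\qquad
\norm{v}_{H^{t,q}}\lesssim \norm{v}_{H^{s+t,r}}^{\theta_2}\norm{v}_{L^p}^{1-\theta_2}.
\end{equation*}
The exponents are fixed by $\frac1{\bar q}=\frac{\theta_1}{r}+\frac{1-\theta_1}{p}$ and $\frac1q=\frac{\theta_2}{r}+\frac{1-\theta_2}{p}$. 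If $s=0$ or $t=0$ (so $\theta_1=0$ or $\theta_2=0$), the corresponding factor is simply estimated by its $L^p$ norm, with $\bar q=p$ or $q=p$.

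\textbf{Step 2: check the Hölder constraint.} We need
\begin{equation*}
\frac1q+\frac1{\bar q}=\frac{\theta_1+\theta_2}{r}+\frac{2-\theta_1-\theta_2}{p}\le\frac1r.
\end{equation*}
\emph{Main case, $\theta_1+\theta_2<1$.} Choose $p$ finite and large enough that $(2-\theta_1-\theta_2)/p\le(1-\theta_1-\theta_2)/r$. Then the constraint holds with finite exponents. This case occurs whenever $\varepsilon>0$, since then $\theta_1<\frac{s}{s+t}$ and hence $\theta_1+\theta_2<1$.

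\emph{Borderline case, $\varepsilon=0$.} Then $\theta_1+\theta_2=1$, and we take $p=\infty$; the sum is then exactly $\frac1r$.

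\textbf{Step 3: Young's inequality.} Write $A=\norm{u}_{H^{s+t+\varepsilon,r}}$ and $B=\norm{v}_{H^{s+t,r}}$, and let $K$ collect the $L^p$ factors, which are polynomial in $\norm{u}_{L^p},\norm{v}_{L^p}$. Step 1 gives the bound $K A^{\theta_1}B^{\theta_2}$.

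Young's inequality with exponents $1/\theta_2$ and $1/(1-\theta_2)$ yields
\begin{equation*}
K A^{\theta_1}B^{\theta_2}\le \delta B + C_\delta K^{1/(1-\theta_2)}A^{\theta_1/(1-\theta_2)}.
\end{equation*}
Since $\theta_1\le 1-\theta_2$, we have $A^{\theta_1/(1-\theta_2)}\le 1+A$. The resulting coefficient $C_\delta K^{1/(1-\theta_2)}$ is a polynomial in the $L^p$ norms only if $1/(1-\theta_2)$ is an integer or we bound it by a polynomial, which is harmless since a real power of a nonnegative number is bounded by a polynomial of higher integer degree. The case $\theta_2=1$ (i.e.\ $s=0$) has no $v$ to absorb: there $\norm{u}_{L^p}B$ would need smallness, so I instead put the $\delta$ trick on the $u$ side, or note that the applications only use $s\ge1$.

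\textbf{Step 4: the special case $r=2$.} Apply the lemma with $s\to1$, $t\to s-1$, using $\norm{\nabla u}_{L^{\bar q}}\lesssim\norm{u}_{H^{1,\bar q}}$ and $\norm{J^{s-2}\nabla v}_{L^q}\lesssim\norm{v}_{H^{s-1,q}}$. These follow from boundedness of Riesz-type multipliers on $L^q$ for $q<\infty$ (or the transference Lemma \ref{lem:transference}).

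\textbf{Main obstacle.} I expect the main obstacle to be the exponent bookkeeping: making $\frac1q+\frac1{\bar q}\le\frac1r$ hold with finite $p$ while keeping $\theta_1+\theta_2<1$, together with the endpoint cases ($p=\infty$ in Proposition \ref{prop:GN}, and $q=\infty$ when $r$ is large) for $\varepsilon=0$.
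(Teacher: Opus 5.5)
Your proposal follows the paper's proof essentially line by line. It uses the same parameters $\theta_1=\frac{s}{s+t+\varepsilon}$, $\theta_2=\frac{t}{s+t}$, the same $\bar q,q$ (with $p$ finite and large when $\varepsilon>0$, $p=\infty$ when $\varepsilon=0$), and Proposition~\ref{prop:GN} on each factor. It then applies Young's inequality with exponents $1/\theta_2$, $1/(1-\theta_2)$, using $\theta_1/(1-\theta_2)=\frac{s+t}{s+t+\varepsilon}\le 1$. Your $L^p$ prescription for $s=t=0$ reproduces the paper's choice $q=\bar q=p=2r$.

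The one thing to correct is your treatment of $s=0<t$. Your diagnosis is right, but no rearrangement of the $\delta$ can save it, because the statement is false there. The constraint $\frac1q\le\frac1r-\frac1{\bar q}$ forces $q\ge r$. Take $u\equiv 2\delta$ and $v_n=\cos(2\pi n x_1)$. The left-hand side is then at least $2\delta\norm{v_n}_{H^{t,r}}$. The right-hand side is $C_\delta+\delta\norm{v_n}_{H^{t,r}}$, where $C_\delta$ does not depend on $n$: indeed $\norm{u}_{H^{t+\varepsilon,r}}=\norm{u}_{L^p}=2\delta$, and $\norm{v_n}_{L^p}$ is independent of $n$. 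Since $\norm{v_n}_{H^{t,r}}\to\infty$, the inequality fails.

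The paper's proof has the same silent gap. It divides by $1-\theta=0$, and its claim that $\bar\theta+\theta<1$ whenever $\varepsilon>0$ fails for $s=0$; so does your parallel claim in Step~2. The correct fix is to add the hypothesis $s>0$ (or $s=t=0$). This covers every application: Lemma~\ref{lem:comutLp} uses $(s,t)=(1,s-1)$ with $s>1$, and Proposition~\ref{prop:wekaerComp} only pairs positive indices.

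Finally, your Step~4 endpoint concern is void. There $\theta_1,\theta_2\in(0,1)$, so $\bar q,q<\infty$ even when $p=\infty$. Hence the boundedness of $\nabla J^{-1}$ on $L^{\bar q}$ and $L^q$ applies.
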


\begin{proof}
	If $s = t =0$, the result holds for the choice $q = \bar q = p = 2r$. Let $p \in [2, \infty]$ to be chosen later and set 
	$$ \bar \theta = \frac {s}{s+t+ \varepsilon}, \quad \theta = \frac t {s+t}, \quad \frac 1 {\bar q} = \frac {1 - \bar \theta} {p} + \frac {\bar \theta} {r}, \quad \frac 1 { q} = \frac {1- \theta} {p} + \frac { \theta} {r}.$$
	Observe that $\frac {\bar \theta + \theta} r \le \frac 1 r$ and that the inequality is strict if $\varepsilon > 0$ so there exists a choice of $p  \in [2, \infty]$ such that $ \frac 1 {\bar q} + \frac 1 {q} \le \frac 1 r$ and if $\varepsilon > 0$, $p$ can be chosen finite. We fix such a $p  \in [2, \infty]$ and apply the Gagliardo-Nirenberg interpolation inequality (see Proposition \ref{prop:GN}):
	$$\norm{ J ^s  u}_{L^{\bar q}} \lesssim \norm{ u}_{L^p}^{1- \bar \theta}\norm{ u}_{H^{s+t+\varepsilon,r}}^{\bar \theta} , \qquad  \norm{J^{t} v}_{L^{q}} \lesssim \norm{ v}_{L^p}^{1 - \theta}\norm{ v}_{H^{s+t,r}}^{\theta}. $$
	Therefore, using Young's inequality, for all $\delta >0$,
	$$
	\norm{ J ^s  u}_{L^{\bar q}}\norm{J^{t} v}_{L^{q}} 
	\lesssim  C_\delta \left(\norm{ u}_{L^p}^{1- \bar \theta}\norm{ u}_{H^{s+t+\varepsilon,r}}^{\bar \theta} \norm{ v}_{L^p}^{1 - \theta} \right)^{\frac 1 {1 - \theta}} +\delta\norm{ v}_{H^{s+t,r}},
	$$
	where $C_\delta$ is a constant depending only on $\delta$ and $\theta$.
	Computing $ \frac {\bar \theta} {1 - \theta} = \frac {s+t} {s+t+\varepsilon} \le 1$, we deduce that $\norm{ u}_{H^{s+t+\varepsilon,r}}^{  {\bar \theta} / {1 - \theta}} \le 1 + \norm{ u}_{H^{s+t+\varepsilon,r}} $. Plugging this back in the above inequality, we have the result.
\end{proof}

\begin{proof}[Proof of Proposition \ref{prop:wekaerComp}]
	By a density argument, it is sufficient to prove the proposition in a smooth setting. From now on, we only work with smooth functions. The proof is by induction.
	
	\emph{Case $0 \le s < 1$.} If $s=0$, the result comes directly from the fact that $f$ is polynomially bounded. If $0 < s < 1$, we prefer to work with the integral kernel representation of the Sobolev-Slobodeckij norms $\norm{ \cdot}_{W^{s,r}} $, so we let $\delta \in (0, \varepsilon)$ such that $s + \delta < 1$ and rely on the bounds
	\begin{equation} \label{eq:besselSob}
		\norm{ \cdot}_{H^{s,r}} \lesssim \norm{ \cdot}_{W^{s + \delta ,r}}  \lesssim  \norm{ \cdot}_{H^{s + \varepsilon ,r}} .
	\end{equation} 
	These inequalities can be seen from the representations of $H^{s,r}= F^{s}_{r,2}$ and $W^{s,r} = F^{s}_{r,r}$ as Triebel-Lizorkin spaces and the elementary embedding $  F^{s_2}_{r,q_2} \hookrightarrow F^{s_1}_{r,q_1}$ for all $r \in (1, \infty)$, $q_1, q_2 \ge 1$ and $s_2 > s_1$. The theory of  Triebel-Lizorkin spaces is detailed in \cite{triebel_theory_1983} and an account of the fractional Sobolev spaces $W^{s,r}$ is given in \cite{nezza_hitchhikers_2011}.
	We now apply the first inequality in \eqref{eq:besselSob} to $f(U)$:
	$$ 	\norm{ f(U)}_{H^{s,r}}^r  \le  \norm{ f(U)}_{W^{s + \delta ,r}}^r =  \norm{ f(U)}_{L^{r}}^r +  \iint_{\T^{2d}} \frac {|f(U(x)) - f(U(y))| ^r }{|x - y |^{d + (s+\delta)r}} \,  \dd x \dd y .$$
	$f$ is bounded by a polynomial so $\norm{ f(U)}_{L^{r}}^r$ is already bounded by a polynomial in some $\norm{U}_{L^{p}}$ norm. The assumption on $\nabla f$ also implies that for all $Z \in [f(U(x)), f(U(y))]$, $\nabla f (Z) \lesssim 1 + |U(x)|^\alpha + |U(y)|^\alpha$. Therefore, applying the mean value theorem,
	$$ 	\norm{ f(U)}_{H^{s,r}}^r  \lesssim   P(\norm{U}_{L^{p}} ) +  \iint_{\T^{2d}} (1 + |U(x)|^\alpha + |U(y)|^\alpha)^r \frac {|U(x) - U(y)| ^r }{|x - y |^{d + (s+\delta)r}} \,  \dd x \dd y ,$$
	where $P$ is a polynomial. Let $\tilde r > r$ to be fixed later and $\tilde r'$ its conjugate exponent. By Hölder's inequality, 
	$$ 	\norm{ f(U)}_{H^{s,r}}  \lesssim   P(\norm{U}_{L^{p}} ) + \left(  \iint_{\T^{2d}} (1 + |U(x)|^\alpha + |U(y)|^\alpha)^{\tilde r} \,  \dd x \dd y  \right)^{\frac 1 {\tilde r'}} \hspace{-4pt} \left( \iint_{\T^{2d}} \frac {|U(x) - U(y)| ^{\tilde r} }{|x - y |^{d\tilde r / r  + (s+\delta)\tilde r}} \,  \dd x \dd y  \right)^{\frac 1 {\tilde r}}$$
	Setting $\tilde s =d(\frac 1 r - \frac 1 {\tilde r}) + s + \delta $ and computing $d\tilde r / r  + (s+\delta)\tilde r = d + \tilde s\tilde r$, this rewrites for some $p\ge 1$
	$$ 	\norm{ f(U)}_{H^{s,r}}  \lesssim   P(\norm{U}_{L^{p}} )( 1 +	\norm{ U}_{W^{\tilde s, \tilde r}} ).$$
	Notice that as $\tilde r \rightarrow r$, $\tilde s \rightarrow s + \delta$ so $\tilde r$ can be chosen close enough to $r$ to have the existence of some $\tilde \delta < \varepsilon$ such that the Sobolev embedding $W^{s+ \tilde \delta, r }(\T ^d ) \hookrightarrow W^{\tilde s, \tilde r }(\T ^d )$ holds. The second member of \eqref{eq:besselSob} then tells us that $\norm{ \cdot}_{W^{s + \tilde \delta ,r}}  \lesssim  \norm{ \cdot}_{H^{s + \varepsilon ,r}}$ and we get
	$$ 	\norm{ f(U)}_{H^{s,r}}  \lesssim   P(\norm{U}_{L^{p}} )( 1 +	\norm{ U}_{H^{s + \varepsilon ,r}} ),$$
	which is the result in the case $s \in (0,1)$.

	\emph{Case $s \ge 1$.} Assume that the theorem holds up to the regularity exponent $s-1$. Then, we have
	\begin{equation} \label{eq:6101}
		\norm{ f(U)}_{H^{s,r}} \lesssim  \norm{  f(U)}_{H^{s-1,r}} + \norm{ \nabla( f(U))}_{H^{s-1,r}}.
	\end{equation} 	
	We bound the first term in the right-hand side using the induction hypothesis. We now focus on the second one. Using the chain rule,
	\begin{equation} \label{eq:6102}
		J^{s-1} \left(\nabla( f(U))\right) =  J^{s-1} \left(\nabla f (U) \nabla U \right) =\nabla f (U) J^{s-1} \nabla U + [J^{s-1} , \nabla f(U)] \nabla U  .
	\end{equation}
	For the first term in the right-hand side, we let $\tilde r > r$ such that the Sobolev embedding $H^{s+ \varepsilon,r} (\T^d) \hookrightarrow H^{s, \tilde r}(\T^d)$ holds and apply Hölder's inequality and the polynomial bound on $\nabla f$:
	\begin{equation} \label{eq:6103}
		\norm{\nabla f (U) J^{s-1} \nabla U}_{L^r} \lesssim \norm{\nabla f (U) }_{L^{r\tilde r / (\tilde r - r)}} \norm{ J^{s-1} \nabla U}_{L^{\tilde r}} \lesssim P(\norm{U }_{L^{p}})  \norm{ U}_{H^{s + \varepsilon, r }}, 
	\end{equation} 
	where $P$ is a polynomial. 
	For the commutator part, let $\bar q_1, q_1, \bar q_2, q_2$  to be chosen later and satisfying the assumptions of Proposition \ref{prop:liComut}. Then,
	\begin{align*}
		\norm{[J^{s-1},  \nabla f(U)]\nabla U}_{L^{r}} &\lesssim \norm{J^{s-2} \nabla ( \nabla f(U) )}_{L^{\bar {q_1}}} \norm{\nabla U}_{L^{q_1}} + \mathbf 1 _{s > 2} \norm{ \nabla ( \nabla f(U)) }_{L^{\bar q_2}} \norm{J^{s-3}\nabla^2 U}_{L^{q_2}(\T^d)} \\
		&\lesssim \norm{ \nabla f(U) }_{H^{s-1, \bar q_1}} \norm{ U}_{H^{1, q_1}} + \mathbf 1 _{s > 2} \norm{ \nabla f(U) }_{H^{1,\bar q_2}} \norm{ U}_{H^{s-1,q_2}}.
	\end{align*} 
	We now use the induction hypothesis on the composition by $\nabla f$ (it applies since $ \nabla^{\lceil s \rceil -1} \nabla f$ has a polynomial bound) and get that for some $p \in (1, \infty)$
	\begin{multline*}
		\norm{[J^{s-1},  \nabla f(U)]\nabla U}_{L^{r}} \\
		\lesssim P(\norm U _{L ^p} ) \left( (1 + \norm{ U }_{H^{s-1 + \varepsilon/2, \bar q_1}}) \norm{ U}_{H^{1, q_1}} + \mathbf 1 _{s > 2} (1 + \norm{ U }_{H^{1 + \varepsilon/2,\bar q_2}} ) \norm{ U}_{H^{s-1,q_2}} \right), 
	\end{multline*} 
	where $P$ is a polynomial.
	We then choose the $\bar q_i, q_i$ given by Lemma \ref{lem:calculGN} in order to have (up to taking $p$ and $P$ large enough)
	$$\norm{ U }_{H^{s-1 + \varepsilon/2, \bar q_1}} \! \norm{ U}_{H^{1, q_1}} , \hspace{4mm}  \norm{ U }_{H^{1 + \varepsilon/2,\bar q_2}} \!  \norm{ U}_{H^{s-1,q_2}} \lesssim   P(\norm U _{L ^p} ) ( 1 + \norm{ U}_{H^{s+ \varepsilon,r}}).$$
	Seeing the constant $1$ as some $H^{t,\bar q_i}$ norm of the constant function $1$, one also has
	$$ \norm{ U}_{H^{1, q_1}} , \hspace{4mm}   \norm{ U}_{H^{s-1,q_2}} \lesssim  P(\norm U _{L ^p} ) ( 1 + \norm{ U}_{H^{s+ \varepsilon,r}}).$$
	We proved 
	\begin{equation*}
		\norm{[J^{s-1},  \nabla f(U)]\nabla U}_{L^{r}} 
		\lesssim P(\norm U _{L ^p} ) ( 1 + \norm{ U}_{H^{s+ \varepsilon,r}}),
	\end{equation*} 
	and plugging this and \eqref{eq:6103} back in \eqref{eq:6102} we get
	$$\norm{ \nabla( f(U))}_{H^{s-1,r}} =  \norm{J^{s-1} (\nabla( f(U)))}_{L^r} \lesssim P(\norm U _{L ^p} ) ( 1 + \norm{ U}_{H^{s+ \varepsilon,r}}) .$$
	Finally, plugging this estimate in \eqref{eq:6101} gives the final result.
\end{proof}

\printbibliography[
]

\end{document}